\newtheorem{theorem}{Theorem}[section]
\newtheorem{lemma}[theorem]{Lemma}
\newtheorem{corollary}[theorem]{Corollary}
\newtheorem{definition}[theorem]{Definition}
\newtheorem{proposition}[theorem]{Proposition}
\newtheorem{remark}[theorem]{Remark}
\numberwithin{equation}{section} 
\newcommand{\abs}[1]{\left|#1\right|}
\newcommand{\norm}[1]{\left\|#1\right\|}
\newcommand{\T}{\ensuremath{\mathcal{T}}}
\newcommand*{\R}{\ensuremath{\mathbb{R}}}
\newcommand*{\N}{\ensuremath{\mathbb{N}}}
\newcommand{\eps}{\varepsilon}
\newcommand*{\G}{\ensuremath{\mathcal{G}}}
\newcommand{\e}{\varepsilon}
\renewcommand{\MR}[1]{} 
\def\dist{\mathop{\rm dist}\nolimits}    
\def\sgn{\mathop{\rm sgn\,}\nolimits}    
\newcommand{\be}{\begin{equation}}
\newcommand{\ee}{\end{equation}}
\title{$\Gamma$-limsup estimate for a nonlocal approximation of the Willmore functional}
\author{Hardy Chan}
\address{Departement Mathematik Und Informatik, Universit\"at Basel, CH-4051 Basel, Switzerland}
\email{hardy.chan@unibas.ch}
\author{Mattia Freguglia}
\address{Scuola Normale Superiore, Piazza dei Cavalieri 7, 56126 Pisa, Italy}
\email{mattia.freguglia@sns.it}
\author{Marco Inversi}
\address{Departement Mathematik Und Informatik, Universit\"at Basel, CH-4051 Basel, Switzerland}
\email{marco.inversi@unibas.ch}
\date{}
\begin{document}

\begin{abstract}
We propose a possible nonlocal approximation of the Willmore functional, in the sense of Gamma-convergence, based on the first variation of the fractional Allen--Cahn energies, and we prove the corresponding $\Gamma$-limsup estimate. Our analysis is based on the expansion of the fractional Laplacian in Fermi coordinates and fine estimates on the decay of higher order derivatives of the one-dimensional nonlocal optimal profile.
This result is the nonlocal counterpart of that obtained by Bellettini and Paolini, where they proposed a phase-field approximation of the Willmore functional based on the first variation of the (local) Allen--Cahn energies. 

\vspace{2.5ex}

\noindent\textit{Mathematics Subject Classification (2020)}: 49J45, 26A33, 35R11.


\vspace{1ex}

\noindent\textit{Keywords}: Willmore functional, fractional Allen--Cahn energy, Gamma-convergence, fractional Laplacian, Fermi coordinates.

\end{abstract}

\maketitle

\section{Introduction} \label{s: introduction}

In recent years, there has been a growing interest in geometric energies, such as the Area functional or the Willmore functional, the latter being defined by
    \begin{equation}
        \label{eq:def-Will}
        \mathcal{W}(\Sigma, \Omega) = \int_{\Sigma \cap \Omega} H_{\Sigma}^2(y) \, d \mathcal{H}^{d-1}(y),
    \end{equation}
where $d \ge 2$ is an integer number, $\Omega \subset \R^d$ is an open set, $\Sigma \subset \R^d$ is a smooth hypersurface, $H_{\Sigma}(y)$ is the mean curvature of $\Sigma$ at the point $y$ and $\mathcal{H}^{d-1}$ stands for the $(d-1)$-dimensional Hausdorff measure in $\R^d$. Among several questions related to this functional, a relevant problem consists in minimizing $\mathcal{W}$ within all the surfaces of a certain type, for instance with fixed genus~\cites{S93, K96, BK03}; or connected, closed, confined to a prescribed region and with fixed area~\cites{DMR14, DLW17}; or with prescribed isoperimetric ratio~\cite{S12}. Another problem that generates a lot of interest is the study of the geometric flow associated to $\mathcal{W}$, see among others~\cites{KS01, KS02, KS12, S01}. 

While both of these problems can be attacked directly, it could be useful to approximate $\mathcal{W}$ with \emph{simpler} functionals, solve similar problems for the approximating functionals and then try to pass to the limit to obtain a solution to the original problems. Here, the notion of approximation considered is that of Gamma-convergence, see~\cite{DGF75} for the definition and the fundamental properties, which is well suited for the convergence of the minima and the minimizers, and, under additional assumptions, also for the convergence of the flows, see~\cites{SS04, Serf11}.

For the Area functional, this approach has been highly successful when considering the phase-transition regularization given by the Allen--Cahn energies, which are defined for any $\eps > 0$ by
    \begin{equation}
        \label{eq:AC-local}
        \mathcal{E}_{\eps}(u, \Omega) = \int_{\Omega} \bigg( \frac{\eps}{2} \abs{\nabla u}^2 + \frac{W(u)}{\eps} \bigg) \, dx, 
    \end{equation}
where $W \colon \R \to [0,+\infty)$ is a double-well potential, e.g. $W(s)=(1-s^2)^2$, which has wells at $s=\pm 1$ as we assume for convenience. From now on, in any statement concerning $W$, we will implicitly assume that it satisfies few structural assumptions, see \cref{S:Notation} for the details.

In the foundational work~\cite{MM77} (see also~\cite{Modica}) Modica and Mortola proved that for any set $E \subset \R^d$ of finite perimeter $\mathrm{Per}(E,\Omega)$ inside a Lipschitz domain $\Omega$ it holds
    \begin{equation}
        \label{eq:MM}
        \Gamma(L^1(\Omega))-\lim_{\eps \to 0^+} \mathcal{E}_{\eps}(\chi_E, \Omega) = \sigma \mathrm{Per}(E,\Omega),
    \end{equation}
where $\chi_E := \mathds{1}_E - \mathds{1}_{E^c}$ denotes the difference between the characteristic functions of $E$ and $E^c$, while $\sigma$ is a positive constant depending on the potential $W$. For further details about the notation, see~\cref{S:Notation}. 

The convergence of (constrained) critical points of $\mathcal{E}_\eps$ to ``generalized'' critical points of the Area functional (also when the ambient space is a closed manifold) has been studied by many authors, see for instance~\cites{Modica, Stern88, LM89, HT00, T05, TW12} and the more recent results in~\cites{G18, GG18, CM20, CM21, M21, M24, Bel22, Bel24}. Moreover, the convergence of the rescaled $L^2$-gradient flows of $\mathcal{E}_\eps$ to Brakke's motion by mean curvature has been established in~\cite{Ilm93}.

Now, it is well-known that the mean curvature represents the first variation of the Perimeter. Therefore, it is reasonable to consider a suitable $L^2$-norm of the first variation of $\mathcal{E}_{\eps}$ to build an approximation for $\mathcal{W}$. Indeed, starting from a conjecture of De Giorgi stated in~\cite{DG91} (see also~\cite{BFP23} for more about this conjecture), Bellettini and Paolini proposed to consider in~\cite{Bel-Pao93} the following approximation of the Willmore functional:
    \begin{equation}
        \label{eq:Bel-Pao-mod}
        \mathcal{W}_{\eps}(u,\Omega) = \frac{1}{\eps} \int_{\Omega} \bigg( \eps \Delta u - \frac{W'(u)}{\eps} \bigg)^2 \, dx.
    \end{equation}
In addition, for any open set $E \subset \R^d$ with $\partial E \in C^2$, they proved the $\Gamma$-$\limsup$ estimate: \vspace{0.5ex}
    \begin{equation}
        \label{eq:Bel-Pao-main1}
        \Gamma(L^1(\R^d))-\limsup_{\eps \to 0^+} \big( \mathcal{E}_{\eps} + \mathcal{W}_{\eps} \big)(\chi_E, \R^d)
        \le
        \sigma \mathrm{Per}(E,\R^d)
        +
        \sigma \mathcal{W}(\partial E, \R^d).
    \end{equation}
To this end, they exhibited a family of functions converging to $\chi_E$ in $L^1$ for which the limit of the values of $\mathcal{E}_{\eps} + \mathcal{W}_{\eps}$ evaluated on this family equals the right-hand side of~\eqref{eq:Bel-Pao-main1}. They considered a slight modification of the rescaled transition profiles
    \begin{equation}
        \label{eq:std-rec}
        u_\eps(x):=q_0\bigg(\frac{\mathrm{dist}_{\partial E}(x)}{\eps}\bigg),
    \end{equation}
where $q_0$ is the one-dimensional optimal profile (i.e. the unique, up to translations, monotone increasing solution of the Euler--Lagrange equation of~\eqref{eq:AC-local} for $d=1, \Omega=\R$ and $\eps=1$) and $\mathrm{dist}_{\partial E}$ is the signed distance function from the boundary of $E$.
The $\Gamma$-$\liminf$ estimate completing~\eqref{eq:Bel-Pao-main1} turns out to be more delicate, and it was proved by Röger and Schätzle in~\cite{Roger-Schatzle} when $d \in \{ 2, 3\}$ and by Nagase and Tonegawa in~\cite{Nagase-Tonegawa} for $d=2$ with different techniques, while being still open in higher dimension.  

The aim of the present paper is to prove a nonlocal counterpart of the estimate~\eqref{eq:Bel-Pao-main1} exploiting the approximation of the (local) Perimeter given by the fractional Allen--Cahn energies. Given $s \in \left(\sfrac{1}{2},1\right)$ and $\eps > 0$, the $s$-fractional Allen--Cahn energy is defined by
    \begin{equation}
        \label{eq:AC-fract}
        \mathcal{F}_{s,\eps}(u, \Omega)
        = \e^{2s-1}
        \frac{\gamma_{d,s}}{4} \iint_{\R^d \times \R^d \setminus (\Omega^c \times \Omega^c)}
        \frac{\abs{u(x)-u(y)}^2}{\abs{x-y}^{d+2s}} \, dx \, dy + \int_{\Omega} \frac{W(u(x))}{\eps} \, dx,
\end{equation}
where $\gamma_{d,s}$ is given by \eqref{eq: constant fractional laplacian}. The starting point of our analysis is the work by Savin and Valdinoci~\cite{SV12} where, among other results, they established a nonlocal version of~\eqref{eq:MM}. Specifically, they proved that for any set $E \subset \R^d$ of finite perimeter $\mathrm{Per}(E,\Omega)$ inside a Lipschitz domain $\Omega$ it holds \vspace{0.5ex}
    \begin{equation}
        \label{eq:SV}
        \Gamma(L^1_{\mathrm{loc}}(\R^d))-\lim_{\eps \to 0^+} \mathcal{F}_{s,\eps}(\chi_E, \Omega) = c_{\star} \mathrm{Per}(E,\Omega),
    \end{equation}
where $c_{\star}$ is a positive constant depending on $s,d, W$ and the functionals $\mathcal{F}_{s,\eps}$ are thought to be defined on those functions in $L^{\infty}(\R^d)$ that take values between the zeros of $W$ (see~\cite{CP16} for similar results on more general functionals of nonlocal type). This result holds also for $s=\sfrac{1}{2}$, but with a different scaling in $\eps$, that is with an extra $\abs{\log(\eps)}^{-1}$ factor in front of $\mathcal{F}_{s,\eps}$. We mention also~\cite{ABS94} for related results in the case $d=1$ and $s= \sfrac{1}{2}$. Similarly to the local case, the estimate from above in the previous theorem is achieved by considering functions of the form of~\eqref{eq:std-rec}, at least when the set $E$ meets the boundary of $\Omega$ transversally, and where $q_0$ is replaced by the nonlocal one-dimensional optimal profile $w$. In particular, $w \colon \R \to (-1,1)$ is defined as the unique, up to translations, monotone increasing solution of the fractional Allen--Cahn equation
    \begin{equation}
        \label{eq:f-one-dim-profile}
        (-\partial_{zz})^s w + W'(w) = 0,
    \end{equation}
where $(-\partial_{zz})^s$ is the fractional Laplacian of order $2s$ in dimension one. We refer to \cref{ss: optimal profile } for a collection of some well-known properties of $w$. More in general, the first variation of $\mathcal{F}_{s,\eps}$ is represented by $\eps^{2s-1}(-\Delta)^s u + \e^{-1} W'(u)$, where $(-\Delta)^s$ denotes the $s$-fractional Laplacian (see \cref{ss: fractional laplacian} for precise definitions). At this point, it is natural to consider the following nonlocal version of the functional defined by~\eqref{eq:Bel-Pao-mod}, that is 
    \begin{equation}
        \label{eq:frac-Bel-Pao}
        \G_{s,\e}(u, \Omega) = \frac{1}{\e}
        \int_{\Omega} \bigg(\e^{2s-1} (-\Delta)^s u + \frac{W'(u)}{\e} \bigg)^2 \, dx,
    \end{equation} 
if $u \in L^{\infty}(\R^d) \cap C^2(\Omega)$, and $\G_{s,\e}(u, \Omega) := +\infty$ otherwise in $L^1_{\mathrm{loc}}(\R^d)$. Here, the nonlocal behaviour of the functional is encoded in the fractional Laplacian. We are ready to state our main result.

\begin{theorem}
        \label{thm:main}
        Let $s \in \left(\sfrac{3}{4},1\right)$. Then, for any bounded open set $E \subset \R^d$ with $\partial E \in C^2$ it holds \vspace{1ex}
            \begin{equation}
                \label{eq:main}
                \Gamma(L^1_{\mathrm{loc}}(\R^d))-\limsup_{\e\to 0^+}
                \big(\mathcal{F}_{s,\e}
                +
                \G_{s,\e}\big)(\chi_E,\Omega)
                \le
                c_{\star} \mathrm{Per}(E, \Omega)
                +
                \kappa_{\star}
                \mathcal{W}(\partial E, \Omega),
        \end{equation}
        where $\Omega \subset \R^d$ is any bounded open set with $\partial \Omega \in C^1$, $c_{\star}$ is the constant in~\eqref{eq:SV} and $\kappa_{\star}$ is a positive constant depending only on $s$ and~$W$. In the case $s=\sfrac{3}{4}$ the same conclusion holds if in the definition of $\mathcal{G}_{s,\eps}$ we add an extra $\abs{\log(\eps)}^{-1}$ factor in front of the integral.
\end{theorem}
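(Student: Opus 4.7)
The plan is to exhibit an explicit recovery sequence modeled on the one used by Bellettini--Paolini, but with the local optimal profile $q_0$ replaced by the nonlocal profile $w$ solving~\eqref{eq:f-one-dim-profile}. Concretely, I would fix a tubular neighborhood $U_\delta = \{|\mathrm{dist}_{\partial E}|<\delta\}$ on which the signed distance $d_E := \mathrm{dist}_{\partial E}$ is $C^2$, take a smooth cut-off interpolating between $d_E$ and a bounded smooth extension outside, and set
\begin{equation*}
u_\e(x) = w\!\left(\frac{\bar d_E(x)}{\e}\right),
\end{equation*}
where $\bar d_E$ is that extension. Convergence $u_\e\to\chi_E$ in $L^1_{\mathrm{loc}}$ is immediate from the asymptotic behavior $w(\pm\infty)=\pm 1$, and the $\mathcal{F}_{s,\e}$ contribution is controlled by Savin--Valdinoci~\eqref{eq:SV} provided one verifies that the analysis there extends to the recovery sequence on the prescribed open set $\Omega$.

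The core of the argument is the analysis of $\G_{s,\e}(u_\e,\Omega)$. I would first reduce the integration to a tube of width $\e^\alpha$ around $\partial E\cap\Omega$ for some $\alpha\in(0,1)$: outside such a tube $\bar d_E/\e\to\pm\infty$, so by the decay estimates of the one-dimensional optimal profile the Euler--Lagrange residual $\e^{2s-1}(-\Delta)^s u_\e+\e^{-1}W'(u_\e)$ becomes negligible in $L^2$ (here the choice of $\alpha$ must balance the polynomial decay of $w'$ with the factor $\e^{-1}$). Inside the tube, I would switch to Fermi coordinates $(y,t)\in(\partial E)\times(-\delta,\delta)$ and expand the fractional Laplacian: the leading order, after rescaling $z=t/\e$, reproduces the one-dimensional operator $(-\partial_{zz})^s w(z)$, so using the identity $(-\partial_{zz})^sw+W'(w)=0$ the EL residual $\e^{2s-1}(-\Delta)^s u_\e+\e^{-1}W'(u_\e)$ begins at order $\e^{-1}\cdot\e=1$ and should factorize as
\begin{equation*}
\e^{2s-1}(-\Delta)^s u_\e(x)+\frac{W'(u_\e(x))}{\e} = H_{\partial E}(\pi(x))\, G(\bar d_E(x)/\e) + \text{lower order},
\end{equation*}
for some universal profile $G\in L^2(\R)$ determined by the Fermi expansion of $(-\Delta)^s$ acting on $w(d_E(\cdot)/\e)$. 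Squaring, dividing by $\e$, and passing to Fermi coordinates (which produces a Jacobian $1+O(t)$) gives via the co-area formula
\begin{equation*}
\G_{s,\e}(u_\e,\Omega)\longrightarrow\int_{\partial E\cap\Omega} H^2\,d\mathcal{H}^{d-1}\cdot \int_{\R}G(z)^2\,dz,
\end{equation*}
so one reads off $\kappa_\star = \int_\R G^2$.

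\textbf{Main obstacle.} The delicate point is the expansion of $(-\Delta)^s$ in Fermi coordinates: unlike the local case where $\Delta=\partial_{tt}-H\partial_t+(\text{tangential})$ is a pointwise splitting, the nonlocal operator integrates against the whole space, so one must carefully separate the principal value contribution into a one-dimensional piece (yielding $(-\partial_{zz})^s w$), a principal curvature piece (yielding $H\cdot G$), and a genuine remainder controlled by second fundamental form and its derivatives. The threshold $s>3/4$ is expected to enter here in the form of integrability: the tail decay $|w'(z)|\sim |z|^{-(1+2s)}$ and analogous bounds on higher derivatives make the remainder square-integrable against $dz$ only when $4s-2>1$, i.e.\ $s>3/4$; exactly at $s=3/4$ a logarithm appears, accounting for the $|\log\e|^{-1}$ rescaling in the borderline statement. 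Verifying these fine decay estimates for derivatives of $w$ (as alluded to in the abstract) and quantifying the error in the Fermi expansion uniformly down to the boundary of the tube will be the technical heart of the proof.
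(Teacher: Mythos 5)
Your overall strategy — take $u_\e = w_\e(\bar d_E)$ with a smoothed distance, cancel the leading $\e^{-1}$ terms via $(-\partial_{zz})^s w + W'(w)=0$, read off a factorized residual $H_\Sigma(\pi(x))\cdot G(\bar d_E/\e)$ from a Fermi expansion of $(-\Delta)^s$, and close via co-area — is exactly the paper's. You correctly identify the two technical pillars (the Fermi expansion of the fractional Laplacian and the decay of higher derivatives of $w$) and correctly locate the threshold $s>\sfrac{3}{4}$ in the square-integrability of $G$. But at the level of a proof there are two concrete gaps.

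First, a regularity reduction is missing. The Fermi expansion requires $\partial E\in C^3$ (one needs third-order Taylor expansions of the chart $g$ in \cref{l: taylor expansion 1}), yet the theorem asserts the bound for $\partial E\in C^2$. Moreover, your co-area computation inside the tube naturally produces the closure $\partial E\cap\overline{\Omega}$ (points of the tube in $\Omega$ project onto $\partial\Omega$), not $\partial E\cap\Omega$ as in~\eqref{eq:main}, and you never address how to remove the boundary contribution. The paper handles both issues with a dedicated approximation lemma (\cref{l:approximation}): one approximates $E$ by smooth sets $E_j$, pushed slightly into $\Omega$ by a transversal flow so that $\abs{E_j\,\Delta\, E}\to 0$, $\mathrm{Per}(E_j,\Omega)\to\mathrm{Per}(E,\Omega)$, $\mathcal{W}(\partial E_j,\Omega)\to\mathcal{W}(\partial E,\Omega)$, the curvatures stay uniformly bounded, and crucially $\mathcal{H}^{d-1}(\partial E_j\cap\partial\Omega)\to 0$, so that transversality kills the boundary term. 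This step is not cosmetic: it is where the hypothesis $\partial\Omega\in C^1$ is actually used (to construct the transversal vector field), and without it the argument proves only the weaker bound with $\overline{\Omega}$ on the right.

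Second, your tube of width $\e^\alpha$ is a departure that is not needed and may backfire. The Fermi expansion in \cref{t:fractional laplacian} comes with a remainder bounded by $C\Lambda^{2s}$ where $\Lambda^{-1}$ is the tube width; if you shrink the tube to scale $\e^\alpha$, that remainder blows up like $\e^{-2s\alpha}$ and you must balance this against the smallness of the region, introducing $\alpha$-dependent constraints tied to $s$. The paper avoids this entirely by using a tube of fixed width $\sfrac{\delta}{10\Lambda}$ and then sending $\Lambda\to\infty$ only at the very end, after the $\e\to 0$ limit; outside the fixed tube the contribution of both $\e^{2s-1}(-\Delta)^s u_\e$ and $\e^{-1}W'(u_\e)$ decays as $O(\e^{4s-3})$ (using the $C^2$ decay of $w$, \cref{l: L^infty bound away from the boundary}), which vanishes exactly when $s>\sfrac{3}{4}$. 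If you insist on a shrinking tube you should make the exponent balance explicit, but I would simply switch to a fixed tube.
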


We recall the definition of the $\Gamma$-$\limsup$, more precisely
\begin{equation}
    \Gamma(L^1_{\mathrm{loc}}(\R^d))-\limsup_{\e\to 0^+} \big(\mathcal{F}_{s,\e} + \G_{s,\e}\big)(\chi_E,\Omega) = \inf \left\{ \limsup_{\e \to 0^+} (\mathcal{F}_{s,\e}+\mathcal{G}_{s,\e})(u_\eps,\Omega) \colon u_\eps \to \chi_E \text{ in }L^1_{\textrm{loc}}(\R^d) \right\}.
\end{equation}

The constant $\kappa_\star$ is defined by \eqref{eq: the exact constant}. We mention that without the regularity assumption on $\Omega$ an analogous of \eqref{eq:main} still holds, that is with $\overline{ \Omega}$ on the right-hand side (see \cref{r: limsup inequality with closure}). Before discussing the strategy of the proof, some comments are in order.

a) We point out that we consider the sum of $\mathcal{F}_{s,\e}$ and $\G_{s,\e}$ in order to rule out the behaviour of certain trivial sequences that are uniformly bounded with respect to the functionals $\{ \G_{s,\e} \}$ but unbounded for the functionals $\{ \mathcal{F}_{s,\e} \}$. In particular, the ultimate goal is to show that there is a relationship between the limit of $\G_{s,\e}(u_\eps)$ and the limit of $\mathcal{F}_{s,\e}(u_\eps)$ when both are uniformly bounded in $\eps > 0$. Since $\mathcal{F}_{s,\e}$ and $\G_{s,\e}$ are non-negative functionals, it is reasonable to consider their sum. An example of a trivial sequence that we want to exclude from our analysis is the following. We know from Rolle's Theorem that there exists $c_0 \in (-1,1)$ such that $W'(c_0)=0$. For any $\eps > 0$, we consider $u_\eps \equiv c_0$, then it holds that $\G_{s,\e}(u_\eps) = 0$. On the other hand, since $W(c_0) > 0$, we have
    \[
        \lim_{\eps \to 0^+} \mathcal{F}_{s,\e}(u_\eps) = \lim_{\eps \to 0^+} \frac{W(c_0)}{\eps} \mathcal{L}^d(\Omega) = +\infty.
    \]

b) The need of a different scaling for $s=\sfrac{3}{4}$ mildly suggests that the previous result might no longer be true when $s \in \left[\sfrac{1}{2},\sfrac{3}{4}\right)$. Maybe, for these values of the parameter $s$ the limit of the functionals $\mathcal{F}_{s,\eps}+\mathcal{G}_{s,\eps}$ (with a different scaling in $\eps$ for $\mathcal{G}_{s,\eps}$) could be equal to the sum of the (local) Perimeter and a combination of the Willmore functional and a nonlocal quantity possibly depending on the nonlocal mean curvature (for the definition see, for example,~\cite{AbVal14} or~\cite{FFMMM15}*{Section~6}). We refer to~\cref{s:proof of main theorem} for further comments.

c) On the other hand, when $s \in \left(0,\sfrac{1}{2}\right)$ the situation is fairly different. Already for the functionals $\mathcal{F}_{s,\eps}$ it is not longer true that they approximate the classical (local) Perimeter. In \cite{SV12}, Savin and Valdinoci showed that the rescaled functionals $\eps^{1-2s} \mathcal{F}_{s,\eps}$ approximate the $2s$-fractional Perimeter and that any family of minimizers with equibounded energy converges to a minimizer of the $2s$-fractional Perimeter in $\Omega$. In~\cite{MSW19}, the previous convergence was extended to more general families $\{ u_{\eps} \}$ of critical points of $\eps^{1-2s} \mathcal{F}_{s,\eps}$ having equibounded energy. In particular, the authors proved that $u_{\eps}$ converges in a quite strong sense to the characteristic function of a set $E_{*}$ such that $\partial E_{*} \cap \Omega$ is a stationary $2s$-fractional minimal surface in $\Omega$. Roughly speaking, $\partial E_{*}$ has vanishing $2s$-fractional mean curvature on $\Omega$. Moreover, they obtained compactness results also for family of functions $\{ u_\eps \}$ with equibounded energy and a uniform Sobolev bound on their first variations. These results are the nonlocal analogies, in the range $s \in \left(0,\sfrac{1}{2}\right)$, of those in \cites{HT00, Ton05} in the local case.

d) In the case $s \in \left[\sfrac{3}{4},1\right)$, it would be interesting to know if a full Gamma-convergence result holds, namely, if the functionals $\mathcal{F}_{s,\eps} + \mathcal{G}_{s,\eps}$ Gamma-converge to the right-hand side of~\eqref{eq:main}, at least in small dimensions and within the class of $C^2$ sets, where the corresponding result in the local case is known to be true~\cites{Roger-Schatzle, Nagase-Tonegawa}. More in general, understanding the compactness properties of families of functions having $\mathcal{F}_{s,\eps} + \mathcal{G}_{s,\eps}$ equibounded would be of interest. However, up to our knowledge the situation is not clear even for families $\{ u_\eps \}$ of critical points of $\mathcal{F}_{s,\eps}$ having equibounded energy. It is reasonable to expect a similar behaviour to the local case, where the energies of the critical points concentrate towards a ``generalized'' critical point of the Perimeter, we refer again to~\cite{HT00}.

In order to prove \cref{thm:main} we need to exhibit, for any set $E \subset \R^d$ with $\partial E$ of class $C^2$, a family of functions $\{ u_{\eps} \} \subset L^{\infty}(\R^d) \cap C^2(\Omega)$ converging to $\chi_E$ in $L^1_{\mathrm{loc}}(\R^d)$ such that \vspace{0.5ex}
    \begin{equation}
        \label{eq:how-proof-main}
        \lim_{\eps \to 0^+} \mathcal{F}_{s,\eps}(u_{\eps},\Omega) = c_{\star} \mathrm{Per}(E,\Omega)
        \quad \text{and} \quad 
        \lim_{\eps \to 0^+} \mathcal{G}_{s,\eps}(u_{\eps},\Omega) = \kappa_{\star} \mathcal{W}(\partial E, \Omega).
    \end{equation}
If $E$ is a smooth set with $\overline{E} \subset \Omega$, then a natural candidate for $u_{\eps}(x)$ would be $w_\e(\dist_{\partial E}(x))$, where $w_\e(z) = w\left( \sfrac{z}{\e}\right)$ and $w$ is the nonlocal one-dimensional optimal profile (see \cref{ss: optimal profile }). Indeed, as mentioned earlier, this family of functions is a recovery sequence for the $\Gamma$-limit of $\mathcal{F}_{s,\eps}$, meaning that the first identity in~\eqref{eq:how-proof-main} holds true when considering this family of functions. On the other hand, while the signed distance function $\mathrm{dist}_{\partial E}$ is globally $1$-Lipschitz, it is smooth only near the boundary of $E$, and therefore $\mathcal{G}_{s,\eps}$ has infinite value at $w_\e( \dist_{\partial E}(x))$. To overcome this issue, we introduce a suitable extension $\beta_{\partial E} \in C^2(\R^d)$ of $\mathrm{dist}_{\partial E}$ outside a fixed small tubular neighbourhood of $\partial E$ (see \cref{ss: distance function}). Accordingly, we consider the family of functions defined by $u_{\eps}(x):=w_\e(\beta_{\partial E}(x))$. We point out here that the first identity in~\eqref{eq:how-proof-main} remains true also when considering this family of functions, this follows by the same argument used in~\cite{SV12}. After doing so, our strategy to prove the second identity in~\eqref{eq:how-proof-main} is similar to the one adopted in~\cite{Bel-Pao93}. We split $\mathcal{G}_{s,\eps}(u_\eps, \Omega)$ into the sum of two contributions, the first one being the integral in a small tubular neighbourhood of $\partial E$ and the second one being the contribution coming from the complement of this tubular neighbourhood. However, the analysis of these two quantities is technically more involved than in the local case. We also remark that it is also quite different from the approach used in~\cite{SV12} to obtain the $\Gamma$-limsup estimate for the functionals $\{ F_{s,\eps} \}$ alone. Indeed, in our case we need a precise description of the fractional Laplacian of the function $u_\eps$ near the interface $\partial E$, as well as an accurate asymptotic of the higher-order derivatives of the nonlocal one-dimensional optimal profile $w$. Both of these aspects were not necessary for the corresponding estimate in~\cite{SV12}.
As mentioned shortly before, in order to estimate the contribution of $\mathcal{G}_{s,\eps}(u_\eps, \Omega)$ away from the interface, we require fine estimates on the decay of the higher order derivatives of~$w$. While such estimates could be known by the experts, we were not able to find them explicitly stated in the literature. For the local one-dimensional optimal profile $q_0$ they are well-known (see for instance~\cite{BAHN15}). In particular, borrowing ideas from~\cites{AT91, KMR11} we deduce the following result.

\begin{theorem} \label{thm:hi-decay}
     Let $s \in (0,1)$ and $k \ge 2$. In addition to our structural assumptions, suppose that $W \in C^{k+1}(\R)$. Letting $w$ be the nonlocal one-dimensional optimal profile, then $w\in C^{k} (\R)$ and there exists a positive constant $C = C(s,W,k)$ such that
        \begin{equation} \label{eq: decay of w''}
            \abs{\partial_x^k w(x)} \leq \frac{C}{1+ \abs{x}^{k+2s}}, \qquad \forall x \in \R.
        \end{equation} 
\end{theorem}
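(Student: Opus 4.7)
I would argue by induction on $k \ge 2$, using as the base input the known first-derivative bound $|w'(x)| \le C(1+|x|)^{-(1+2s)}$ recalled in the preliminary section on the optimal profile. Regularity $w \in C^k(\R)$ follows from a bootstrap on the equation $(-\partial_{zz})^s w = -W'(w)$: since $w \in L^\infty$ and $W \in C^{k+1}$, iterated fractional Schauder estimates give $w \in C^{k,\alpha}(\R)$ for some $\alpha > 0$.

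For the inductive step, assume $|w^{(j)}(x)| \le C_j(1+|x|)^{-(j+2s)}$ for $j = 1,\ldots,k-1$. Differentiating the equation $k$ times and isolating the linear-in-$w^{(k)}$ term via Fa\`a di Bruno's formula, I write
\[
\mathcal{L} v = g_k, \qquad v := w^{(k)}, \qquad \mathcal{L} := (-\partial_{zz})^s + W''(w),
\]
where $g_k$ is a finite linear combination (with bounded coefficients of the form $W^{(m+1)}(w)$) of products $\prod_i w^{(\alpha_i)}$ with $\sum_i \alpha_i = k$, $\alpha_i \ge 1$, and at least two factors. By the induction hypothesis, each such product of $m \ge 2$ factors is bounded by $(1+|x|)^{-(k+2sm)}$, so
\[
|g_k(x)| \le C(1+|x|)^{-(k+4s)},
\]
which is strictly better than the target decay rate $(k+2s)$ for $v$.

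The sharp decay of $v$ is then obtained by a nonlocal comparison with the barrier $\phi(x) := M(1+|x|^2)^{-(k+2s)/2}$. Since $W''(w(x)) \to W''(\pm 1) > 0$ at infinity while $(-\partial_{zz})^s\phi$ is of the same algebraic order $(1+|x|)^{-(k+2s)}$ as $\phi$, for $M$ large enough one has
\[
\mathcal{L}\phi(x) \ge c\,\phi(x) \quad \text{for all } |x| \ge R,
\]
for some constant $c > 0$. Since $|g_k|$ decays strictly faster than $\phi$, this forces $\mathcal{L}(\phi \pm v) \ge 0$ in $\{|x| \ge R\}$. On the compact set $\{|x| \le R\}$, the inequality $\phi \ge |v|$ is ensured by the $L^\infty$ bound on $v$ coming from Step 1 and by enlarging $M$. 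Applying a nonlocal maximum principle for $\mathcal{L}$ (valid because $W''(w)$ is bounded below by a positive constant outside a compact set and $v \to 0$ at infinity by the induction on the base case) yields $|v| \le \phi$ throughout $\R$, closing the induction.

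\textbf{Main obstacle.} The delicate point is the barrier inequality $\mathcal{L}\phi \ge c\phi$ at infinity. Unlike the local Laplacian, $(-\partial_{zz})^s\phi$ is of the \emph{same} algebraic order as $\phi$ rather than of lower order: the long-range tail in the singular integral defining $(-\partial_{zz})^s\phi$ preserves the decay rate, and its leading coefficient can, in principle, be negative. The barrier therefore works only because the positive zero-order term $W''(\pm 1)\phi$ dominates $|(-\partial_{zz})^s\phi|$ at infinity, and pinning this down requires a careful asymptotic expansion of the singular integral, splitting near-field and far-field contributions and tracking constants. A secondary technical point is the rigorous formulation of the nonlocal maximum principle for $\mathcal{L}$ on $\R$ for bounded functions vanishing at infinity, which has to accommodate the fact that $v = w^{(k)}$ changes sign and thus requires separate comparisons of $\phi - v$ and $\phi + v$.
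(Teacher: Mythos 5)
Your inductive framework (differentiate the equation, isolate $W''(w)w^{(k)}$, note that Fa\`a di Bruno's nonlinear terms decay like $(1+|x|)^{-(k+4s)}$) is sound and matches the reduction the paper also performs. However, the barrier/comparison step, which carries the entire weight of the inductive closure, does not work with the proposed barrier, and the issue is more severe than you flag in your ``main obstacle'' paragraph.

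For $\phi(x)=M(1+|x|^2)^{-(k+2s)/2}$ with $k\ge 2$, the decay exponent $k+2s$ exceeds $1+2s$, so $\phi\in L^1(\R)$. For large $|x|$ the singular integral is dominated by the far-field contribution from $y$ near the origin, giving
\begin{equation}
(-\partial_{zz})^s\phi(x)\;=\;-\gamma_{1,s}\,\frac{\int_\R\phi}{|x|^{1+2s}}\;+\;O\!\left(|x|^{-(k+4s)}\right),
\end{equation}
which is \emph{negative} and decays only like $|x|^{-1-2s}$, i.e.\ \emph{strictly slower} than $\phi\sim|x|^{-(k+2s)}$. Thus the zero-order term $W''(\pm1)\phi\sim|x|^{-(k+2s)}$ does not dominate $(-\partial_{zz})^s\phi$ at infinity; it is dominated by it, and $\mathcal{L}\phi\to 0^-$ like $-C|x|^{-1-2s}$. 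The required supersolution inequality $\mathcal{L}\phi\ge c\,\phi$ (indeed, even $\mathcal{L}\phi\ge 0$, or $\mathcal{L}\phi\ge |g_k|$) fails for all large $|x|$, so no choice of $M$ or $R$ makes $\mathcal{L}(\phi\pm v)\ge 0$ outside a compact set, and the maximum principle cannot be invoked. Your description of the obstruction (``same algebraic order,'' ``coefficient can in principle be negative'') understates it: the fractional Laplacian degrades, rather than preserves, the decay rate of a rapidly decaying barrier, and the sign is definitively wrong. This is precisely the reason polynomial barriers that work so well for local second-order operators do not transfer to $(-\partial_{zz})^s+W''(w)$ when the target decay is faster than $|x|^{-1-2s}$.

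The paper avoids comparison arguments altogether. It constructs the fundamental solution $G_{s,\lambda}$ of $\lambda+(-\Delta)^s$ via subordination to the fractional heat kernel, proves the pointwise bounds $|\partial_x^j G_{s,\lambda}(x)|\lesssim|x|^{-j-1-2s}$, writes $w'=G_{s,\lambda}*(\Psi w')$ with $\Psi=\lambda-W''(w)$, and differentiates this convolution identity $k$ times. A near-field/far-field split together with integration by parts yields a self-improving inequality of the form $|w^{(k+1)}(x)|\lesssim|x|^{-k-1-2s}+|x|^{-2s}\|w^{(k+1)}\|_{L^\infty([x/2,3x/2])}$, which is then iterated finitely many times to absorb the second term. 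If you want to salvage a comparison-style argument, you would need a barrier whose fractional Laplacian genuinely decays at the target rate (e.g.\ one built directly from $G_{s,\lambda}$ or its derivatives, or a two-scale ansatz that compensates the $|x|^{-1-2s}$ tail), but the naive power barrier cannot work.
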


The estimate in the theorem above is coherent with the fact that $w'$ is asymptotic to $\abs{x}^{-1-2s}$, as proved in \cite{CSbis14}*{Theorem 2.7}. See also \cref{r: assumption thm decay} for further discussions. 

To deal with the contribution coming from a small tubular neighbourhood $\mathcal{U}_{\delta}$ of $\partial E$ of size $\delta$, it is convenient to use Fermi coordinates (see~\cref{ss: uniform fermi coordinates} for the precise definition). More precisely, we identify any $x \in \mathcal{U}_{\delta}$ with a couple $(y,z)$, where $y \in \partial E$ is the point of minimum distance between $x$ and $\partial E$ and $z=\mathrm{dist}_{\partial E}(x)$. Using these coordinates the (local) Laplacian can be written as
    \begin{equation}
        \label{eq:lL-Fermi}
        \Delta = \partial_{zz} - H_z \partial_z + \Delta_{\Sigma_z},
    \end{equation}
where $H_z$ denotes the mean curvature (with respect to its outer normal) of the hypersurface $\Sigma_z := \partial\{ x \in \mathcal{U}_{\delta} \colon \mathrm{dist}_{\partial E}(x) > z \}$, which is diffeomorphic and parallel to $\partial E$ and $\Delta_{\Sigma_z}$ is the Laplace--Beltrami operator on $\Sigma_z$. One advantage of these coordinates is that whenever $u$ is a function that depends only on the distance from the boundary of $E$, then the computation of its Laplacian considerably simplifies. In contrast, a similar expression to~\eqref{eq:lL-Fermi} for the fractional Laplacian has been derived only recently. Indeed, the first-named author studied Fermi coordinates in the context of the fractional Laplacian in his PhD Thesis and he introduced suitable expansions for the fractional Laplacian in~\cites{ChanWei17, CLW17}
, in collaboration with Liu and Wei. Roughly speaking, if $s \in \left(\sfrac{1}{2},1\right)$, $u_\eps$ and $w_\eps$ are defined as above, then we have the following expansion
    \begin{equation}
        \label{eq:fL-Fermi}
        (-\Delta)^s u_{\eps}(x) = (-\partial_{zz})^s w_{\eps}(z) + H_{\partial E}(y) L_s[w_{\eps}'](z) + \mathcal{R}_{\eps}(y,z),
    \end{equation}
where $L_{s}$ is a suitable operator evaluated at $w'_{\eps}$ and $\mathcal{R}_{\eps}(y,z)$ is an error term. In addition, the first term on the right-hand side in the previous expansion scales as $\eps^{-2s}$, the second one scales as $\eps^{1-2s}$, while the remainder term is uniformly bounded in $\eps$. We refer to \cref{s:expansion of fractional laplacian} for the precise statement and a detailed proof.

In conclusion, we briefly comment about the assumption $s \in \left[\sfrac{3}{4},1\right)$. If we neglect the error term in~\eqref{eq:fL-Fermi}, then from~\eqref{eq:f-one-dim-profile} and~\eqref{eq:frac-Bel-Pao} we obtain the following asymptotic
    \begin{equation}
        \label{eq:why-s-range}
        \G_{s, \eps}(u_\eps, \mathcal{U}_{\delta})
        \simeq
        \eps^{4s-3} \norm{L_{s}[w_{\eps}']}^2_{L^2((-\delta, \delta))}
        \mathcal{W}(\partial E, \mathcal{U}_{\delta})
        =
        \norm{L_{s}[w']}^2_{L^2\left( \left(-\sfrac{\delta}{\eps}, \sfrac{\delta}{\eps}\right) \right)}
        \mathcal{W}(\partial E, \mathcal{U}_{\delta}).
    \end{equation}
Moreover, we will see in~\cref{s: finiteness of constants} that $L_{s}[w'] \in L^2(\R)$ when $s \in (\sfrac{3}{4},1)$, while for $s=\sfrac{3}{4}$ the norm in the right-hand side of~\eqref{eq:why-s-range} diverges logarithmically in $\eps$. We refer to \cref{s:proof of main theorem} for a more detailed description of the heuristic argument as well as the proof of \cref{thm:main}.

The paper is organised as follows. In \cref{s:tools} we discuss some geometric lemmas and we deduce some useful decay properties of the nonlocal one-dimensional optimal profile $w$. In \cref{s: decay optimal profile} we discuss the proof of \cref{thm:hi-decay}. \cref{s:expansion of fractional laplacian} is entirely devoted to the expansion of the fractional Laplacian in Fermi coordinates. In this section, we follow the presentation of~\cite{CLW17}, adapting some of their results to our framework. In~\cref{s: finiteness of constants} we show that the constant $\kappa_{\star}$ appearing in~\eqref{eq:main} is finite. In \cref{s:proof of main theorem} we combine our results to conclude the proof of Theorem~\ref{thm:main}.

\section{Tools} \label{s:tools}

In this section we collect some tools which will be needed. We start by introducing some notation that we keep throughout the manuscript. 

\subsection{Notation}\label{S:Notation}

\begin{itemize}
    \item We denote by $\Gamma((\mathbb{X},d)) - \lim_{\e} F_\e$, $\Gamma((\mathbb{X},d)) - \limsup_{\e} F_\e$ and $ \Gamma((\mathbb{X},d)) - \liminf_{\e} F_\e$ the $\Gamma$-limit, the $\Gamma$-limsup, and the $\Gamma$-liminf, as $\eps \to 0$, of a family of  functions $F_\e : \mathbb{X} \to \R \cup \{ +\infty \}$ with respect to the metric $d$ on $\mathbb{X}$. For the definitions see for instance \cites{DGF75}.    
    \item We denote by $W$ a double-well potential satisfying the following structural assumptions: 
        \begin{enumerate}[label=($W1$),ref=$W1$]
        \item\label{h: zero of potential}
        $W: \R \to [0, +\infty)$, $W$ is even and $ \{W(x) = 0 \} = \{\pm 1\}$; 
        \end{enumerate}
        \begin{enumerate}[label=($W2$),ref=$W2$]
        \item\label{h: potential is smooth}
        $W \in C^{3}(\R)$; 
        \end{enumerate}
        \begin{enumerate}[label=($W3$),ref=$W3$]
        \item \label{h: W'' positive} 
        $W''(\pm 1) = \lambda > 0$; 
        \end{enumerate}   
    \item we denote by $\mathcal{W}$ the Willmore functional, $\text{Per}$ the Perimeter functional, $\mathcal{F}_{s,\e}$ the scaled nonlocal Allen--Cahn energy and $\mathcal{G}_{s,\e}$ the squared $L^2$-norm of its first variation (\cref{s: introduction}); 
    \item unless otherwise specified, we denote by $w$ the one-dimensional optimal profile (\cref{ss: optimal profile }) and $w_\e(z) = w(\sfrac{z}{\e})$ and $u_\e(x) = w_\e(\beta_{\Sigma}(x))$ (\cref{d:regular distance}); 
    \item we denote by $[\cdot]_{C^{k,\theta}}, \norm{\cdot}_{C^{k,\theta}}$ the H\"older seminorms and norms (\cref{ss: function spaces}); 
    \item we denote by $\mathscr{F}_d$ the Fourier transform in $\R^d$ (\cref{ss: fractional laplacian}); 
    \item we denote by $(-\Delta)^s$ the fractional Laplacian operator of power $s \in (0,1)$ (see \eqref{eq: fractional laplacian}); 
    \item $\gamma_{d,s}$ is the constant given by \eqref{eq: constant fractional laplacian}; 
    \item we denote by $P^{(s)}_d$ the fractional heat kernel of power $s$ in $\R^d$ (\cref{ss: fractional heat kernel}); 
    \item we denote by $G_{s,\lambda}$ the fundamental solution of $(-\Delta)^s + \lambda \mathrm{Id}$ (\cref{p: decay of fundamental solution}); 
    \item we denote by $B_\delta^m = \{ x \in \R^m \colon \abs{x} < \delta \}$ and $\mathcal{H}^m$ the $m$-dimensional Hausdorff measure; 
    \item we denote by $\Omega \subset \R^d$ a bounded open set; 
    \item we denote by $E \subset \R^d$ an open set of class $C^2$ as in \cref{d: uniform principal coordinates} and we set $\Sigma = \partial E$; 
    \item we denote by $\chi_E = \mathds{1}_E - \mathds{1}_{E^c}$, where $\mathds{1}_E$ stands for the indicator function of the set $E$;
    \item we denote by $\dist_{\Sigma}$ the signed distance from $\Sigma$ and $\beta_{\Sigma}$ the smoothed signed distance (\cref{d:regular distance}); 
    \item for any $\ell>0$ we denote by $\Sigma_\ell = \dist_{\Sigma}^{-1}((-\ell,\ell))$;   
    \item we denote by $\pi_{\Sigma}: \Sigma_\ell \to \Sigma$ the projection onto $\Sigma$ (\cref{l: regularity of distance function}); 
    \item given an open set $E \subset \R^d$, for any $x \in \partial E = \Sigma$ we denote by $T_x \Sigma, N_\Sigma(x), H_\Sigma(x)$ the tangent space to $\Sigma$ at $x$, the inner unit normal to $\Sigma$ at $x$ and the scalar mean curvature of $\Sigma$ at $x$ computed with respect to $-N_\Sigma(x)$, respectively;
    \item given an open set $E \subset \R^d$ and $ x_0 \in \partial E = \Sigma$, we denote by $Y = \mathrm{Id}\times g$ a principal parameterization around $x_0$, $N$ the inner unit normal vector given by the parameterization $g$, $k_1, \dots, k_{d-1}$ the principal coordinates at $x_0$ and $\Phi(y,z) = Y(y) + z N(y)$ the Fermi coordinated around $x_0$ (\cref{d: uniform principal coordinates}); 
    \item we denote by $\abs{(Y(y) - z_0 e_d)_\tau}$ the tangential distance (\cref{d: tangential distance}); 
    \item we denote by $\eta_{\e, \ell}$ the function in \eqref{eq: eta_e,kappa}; 
    \item we denote by $c_\star,\mu_w, \kappa_\star$ the constants in \eqref{eq:SV}, \eqref{eq: limit constant 1} and \eqref{eq: the exact constant}.
\end{itemize}

\subsection{Function spaces} \label{ss: function spaces}

Given $\Omega \subset \R^d$ open, $\theta \in (0,1]$, $k \in \N $ and $u: \Omega \to \R$, we set
\begin{equation}
    [u]_{C^\theta(\Omega)}  = \sup_{x,y \in \Omega, x \neq y} \frac{\abs{u(x)-u(y)}}{\abs{x-y}^\theta}, \quad \norm{u}_{C^\theta(\Omega)} = \norm{u}_{L^\infty(\Omega)} + [u]_{C^\theta(\Omega)},
\end{equation}
\begin{equation}
    [u]_{C^k(\Omega)} = \sum_{\abs{\alpha} = k}\norm{\partial^\alpha u}_{L^\infty(\Omega)}, \quad \norm{u}_{C^k(\Omega)} = \norm{u}_{L^\infty(\Omega)} + \sum_{j=1}^k [u]_{C^j(\Omega)},
\end{equation}
\begin{equation}
    [u]_{C^{k,\theta}(\Omega)} = \sum_{\abs{\alpha} = k} [\partial^\alpha_x u]_{C^\theta(\Omega)}, \quad \norm{u}_{C^{k,\theta}(\Omega)} = \norm{u}_{C^k(\Omega)} + [u]_{C^{k,\theta}(\Omega)}. 
\end{equation}

\subsection{The fractional Laplacian} \label{ss: fractional laplacian}

Here we recall the definition and some basic properties of the fractional Laplacian. We refer to \cites{Ga19, DPV12} for an extensive discussion on the topic. Given $s \in (0,1)$ and a function $u \colon \R^d \to \R$ globally bounded and of class $C^2$, the fractional Laplacian 
 and $(-\Delta)^s u$ is defined by 
\begin{equation}
    (-\Delta)^s u(x) = \gamma_{d,s} P.V. \int_{\R^d} \frac{u(x) -u(y)}{\abs{x-y}^{d+2s}}\, dy =  \gamma_{d,s} \lim_{\nu \to 0} \int_{B_\nu(x)^c} \frac{u(x) -u(y)}{\abs{x-y}^{d+2s}}\, dy  \label{eq: fractional laplacian}, 
\end{equation}
where the constant $\gamma_{d,s}$ is defined for convenience by 
\begin{equation}
    \gamma_{d,s} := s 2^{2s} \pi^{-\frac{d}{2}} \frac{\Gamma\left( \frac{d+2s}{2} \right)}{\Gamma(1-s)}. \label{eq: constant fractional laplacian} 
\end{equation}
It is easy to check that for globally bounded functions of class $C^2$ it holds that
\begin{equation}
    (-\Delta)^s u(x) = \frac{\gamma_{d,s}} {2} \int_{\R^d} \frac{2 u(x) - u(x+y) - u(x-y)}{\abs{y}^{d+2s}} \, dy, 
\end{equation}
We define the Fourier transform by
\begin{equation}
    \mathscr{F}_d u(\xi) = \int_{\R^d} u(x) e^{2 \pi i x \cdot \xi} \, dx,  
\end{equation}
so that the inversion formula reads as $(\mathscr{F}_d^{-1} u ) (x) = (\mathscr{F}_d u) (-x)$. With this convention, it can be checked that (see e.g.~\cite{Ga19}) 
\begin{equation}
    \mathscr{F}_d [(-\Delta)^s u] (\xi) =  \abs{2\pi \xi}^{2s} \mathscr{F}_d u(\xi), 
\end{equation}

The definition of the fractional Laplacian extends to tempered distributions by duality. We state and prove the following elementary estimate. 

\begin{lemma} \label{l:bound fractional laplacian}
Given $s \in (0,1)$ and $\Omega' \subset \joinrel \subset \Omega'' \subset \joinrel \subset \Omega \subset \R^d$ open sets, then for any $u \in L^\infty(\R^d) \cap C^{2}(\Omega)$ it holds
\begin{equation}
    \norm{(-\Delta)^s u}_{L^\infty(\Omega')} \leq C(d,s,\Omega', \Omega'') \left( [u]_{C^{2}(\Omega'')} + \norm{u}_{L^\infty(\R^d)} \right). \label{eq: L^infty bound frac lapl}
\end{equation}
\end{lemma}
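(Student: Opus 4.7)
The plan is to use the symmetric (second-difference) representation of the fractional Laplacian and split the integration domain into a near region, where Taylor expansion controls the integrand, and a far region, where we use the global $L^\infty$ bound. Set $r := \tfrac{1}{2}\dist(\Omega',\partial\Omega'') > 0$ and $R := \dist(\Omega',\partial\Omega'')$, so that for any $x \in \Omega'$ and any $y$ with $|y|<R$ we have $x\pm y \in \Omega''$.

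For $x \in \Omega'$, write
\begin{equation*}
    (-\Delta)^s u(x) = \frac{\gamma_{d,s}}{2} \int_{B_r} \frac{2u(x)-u(x+y)-u(x-y)}{|y|^{d+2s}}\,dy + \frac{\gamma_{d,s}}{2} \int_{\R^d \setminus B_r} \frac{2u(x)-u(x+y)-u(x-y)}{|y|^{d+2s}}\,dy.
\end{equation*}
For the first integral, since $x\pm y\in\Omega''$ and $u \in C^2(\Omega)$, a second-order Taylor expansion along the segment $[x-y,x+y]\subset\Omega''$ gives
\begin{equation*}
    |2u(x)-u(x+y)-u(x-y)| \le [u]_{C^{2}(\Omega'')}\,|y|^2,
\end{equation*}
so the near contribution is bounded by $C(d,s)\,[u]_{C^2(\Omega'')}\int_0^r \rho^{1-2s}\,d\rho = C(d,s,r)\,[u]_{C^2(\Omega'')}$, which is finite precisely because $s<1$. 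For the second integral, the crude bound $|2u(x)-u(x+y)-u(x-y)|\le 4\|u\|_{L^\infty(\R^d)}$ yields a contribution bounded by $C(d,s)\,\|u\|_{L^\infty(\R^d)}\int_r^\infty \rho^{-1-2s}\,d\rho = C(d,s,r)\,\|u\|_{L^\infty(\R^d)}$, which is finite because $s>0$. Adding the two pieces and taking supremum over $x \in \Omega'$ gives the claimed estimate with $C=C(d,s,\Omega',\Omega'')$ depending on the domains only through $r$.

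There is no real obstacle here; the only care needed is to use the symmetric form of $(-\Delta)^s u$ (rather than the one-sided principal-value form), so that the Taylor expansion produces a quadratic remainder on $B_r$ which is integrable against $|y|^{-d-2s}$ for every $s \in (0,1)$. The assumption $\Omega' \Subset \Omega'' \Subset \Omega$ is used precisely to ensure that both $x+y$ and $x-y$ stay inside $\Omega''\subset\Omega$, where the $C^2$ bound on $u$ is available.
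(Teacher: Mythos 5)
Your proof is correct and follows essentially the same near/far decomposition as the paper's: the only cosmetic difference is that you use the symmetric second-difference representation of $(-\Delta)^s$ (so the gradient term cancels automatically), whereas the paper uses the one-sided principal-value form and subtracts $\nabla u(x)\cdot(y-x)$, which integrates to zero over each annulus by symmetry.
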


\begin{proof}
Let $\delta>0$ be such that $B_{\delta}(x) \subset \Omega''$ for any $x \in \Omega'$. Then, up to a multiplicative constant $C(d,s,\Omega', \Omega'')>0$, we have 
\begin{align}
    \abs{(-\Delta)^s u(x)} & \lesssim \int_{B_{\delta}^c(x)} \frac{\abs{ u(x) -u(y)} }{\abs{x-y}^{d+2s}} \, dy 
    +\lim_{\nu \to 0} \abs{ \int_{B_{\delta}(x) \setminus B_\nu(x)} \frac{u(x) - u(y) - \nabla u(x) \cdot (y-x) }{\abs{x-y}^{d+2s}}\, dy   } 
    \\ & \lesssim \norm{u}_{L^\infty(\R^d) } \int_{B_\delta^c(x)} \frac{1}{\abs{x-y}^{d+2s}} \, dy + [u]_{C^2(\Omega'')} \int_{B_{\delta}(x)} \frac{1}{\abs{x-y}^{d+2s-2}} \, dy  
    \\ & \lesssim  \norm{u}_{L^\infty(\R^d)} + [u]_{C^2(\Omega'')}. 
\end{align}
\end{proof}

\subsection{The one-dimensional optimal profile} \label{ss: optimal profile }

Throughout the whole manuscript, we denote by $w: \R \to (-1,1)$ the unique increasing solution to the fractional Allen--Cahn equation 
\begin{equation} \label{eq: fractional AC}
\begin{cases}
    (-\partial_{zz} )^s w + W'(w) = 0, 
    \\ w(0)=0, 
    \\ \lim_{z \to \pm \infty} w(z) = \pm 1. 
\end{cases}
\end{equation}
For the sake of clarity, we collect some well-known results about the optimal profile that will be needed in our analysis. 

\begin{proposition} \label{t:optimal profile}
Fix $s \in (0,1)$ and let $W$ be a double-well potential satisfying \eqref{h: zero of potential}, \eqref{h: potential is smooth},  \eqref{h: W'' positive}. There exists a unique strictly increasing solution $w: \R \to (-1,1)$ to the problem \eqref{eq: fractional AC} Moreover, $w$ is odd, $w \in  C^1(\R)$ and there exists $C(s,W)>0$ such that  
\begin{equation}
    \abs{w'(z) } \leq \frac{C}{1+ \abs{z}^{1+2s}}, \qquad \forall z \in \R. \label{eq: decay w'}
\end{equation}
\end{proposition}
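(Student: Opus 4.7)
The statement collects classical facts about the nonlocal optimal profile, so the plan is essentially to cite or adapt existing constructions. First, existence of a strictly increasing solution to \eqref{eq: fractional AC} with $w(-\infty) = -1$ and $w(+\infty) = 1$ can be obtained variationally, by minimizing a suitably renormalized fractional Allen--Cahn energy among monotone functions connecting $-1$ to $+1$, or by taking limits of Dirichlet problems on $(-R,R)$ as $R\to\infty$; I would reference \cite{CSbis14} (see also Palatucci--Savin--Valdinoci). Strict monotonicity follows by applying the strong maximum principle to the linearized equation satisfied by $w'$. Uniqueness up to translation is classical via the sliding method combined with the fractional maximum principle, and the normalization $w(0) = 0$ then fixes the translation. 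Finally, since $W$ is even by \eqref{h: zero of potential}, the function $z \mapsto -w(-z)$ solves the same problem with the same normalization, hence coincides with $w$ by uniqueness, yielding oddness.

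For $w \in C^1(\R)$ I would bootstrap using interior Schauder-type estimates for $(-\partial_{zz})^s$ (Silvestre). From $w \in L^\infty(\R)$ and $W \in C^3$ one has $W'(w) \in L^\infty$, which yields $w \in C^{2s-\epsilon}_{\mathrm{loc}}$ when $2s < 1$, and already $w \in C^{1, 2s-1-\epsilon}_{\mathrm{loc}}$ when $2s > 1$. In the former case $W'(w)$ inherits the same H\"older regularity, so a further application of the fractional Schauder estimate upgrades $w$ to $C^{4s-\epsilon}_{\mathrm{loc}}$; iterating finitely many times crosses the threshold $1$ for any $s \in (0,1)$. Translation invariance of the equation makes all constants uniform in the base point, so the estimates are global and give $w \in C^1(\R)$.

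The decay estimate \eqref{eq: decay w'} is the main technical obstacle and is essentially the content of \cite{CSbis14}*{Theorem~2.7}, from which the bound can be quoted directly. The underlying strategy is to differentiate \eqref{eq: fractional AC} to obtain the linear equation
\begin{equation*}
    [(-\partial_{zz})^s + \lambda]\, w'(z) = \bigl(\lambda - W''(w(z))\bigr)\, w'(z),
\end{equation*}
where $\lambda = W''(\pm 1) > 0$ by \eqref{h: W'' positive}; one then compares $w'$ with the fundamental solution $G_{s,\lambda}$ of $(-\partial_{zz})^s + \lambda\, \Id$, whose decay at infinity is $|z|^{-1-2s}$. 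The delicate point is a self-improving polynomial-decay argument: a preliminary decay for $w'$ (and for $1-w$, controlled by integrating $w'$) is upgraded iteratively via integral representation against $G_{s,\lambda}$, using that $\lambda - W''(w) \to 0$ as $|z| \to \infty$, until one matches the optimal rate $|z|^{-1-2s}$ dictated by $G_{s,\lambda}$.
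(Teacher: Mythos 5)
Your proposal is correct and takes essentially the same route as the paper, which also treats this proposition as a collection of known facts: existence and uniqueness by minimization (citing Palatucci--Savin--Valdinoci), oddness from evenness of $W$ together with uniqueness, $C^1$ regularity by iterating Silvestre's interior estimates, and the decay estimate \eqref{eq: decay w'} quoted directly from Cabr\'e--Sire. The extra detail you supply on the sliding method, the maximum-principle argument for strict monotonicity, and the self-improving decay scheme is implicit in those references and is consistent with the paper's later proof of the higher-order decay theorem.
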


We recall that $w$ is built by minimization of the $\mathcal{F}_{s,1}$ when $\Omega = \R$. More precisely, $w$ is the unique (up to translation) minimizer of $\mathcal{F}_{s,1}(\cdot, \R)$ with respect to perturbation with compact support in $\R$, see \cite{PSV13}*{Theorem 2} for instance. Since $W$ is even, it is readily checked that $w$ is odd. The regularity of $w$ is established by iterating the apriori estimates in \cite{S07}*{Proposition 2.8-2.9}. As we already mentioned, the decay estimate \eqref{eq: decay w'} is optimal, see \cite{CSbis14}*{Theorem 2.7}. Integrating \eqref{eq: decay w'}, we find that 
\begin{equation}
    \abs{w(z) - \sgn(z)} \leq \frac{C}{1+ \abs{z}^{2s}} \qquad \forall z \in \R. \label{eq: decay of w}
\end{equation}
Lastly, we recall that $w'$ solves the fractional Allen--Cahn equation
\begin{equation}
    (-\partial_{zz})^s w' + W''(w) w' =0. \label{eq: linearized fractional AC}
\end{equation}

\subsection{The fractional heat kernel} \label{ss: fractional heat kernel}

We consider the solution to the fractional heat equation
\begin{equation}
    \begin{cases}
        \partial_t P^{(s)}_d + (-\Delta)^s P^{(s)}_d = 0 & (t, x) \in (0, +\infty) \times \R^d, 
        \\ P^{(s)}_d(0) = \delta_0 & x \in \R^d, 
    \end{cases}
\end{equation}
where the initial value is taken in the sense of distributions. More precisely, $P^{(s)}_d$ is defined via the Fourier transform. 

\begin{definition} \label{d: fractional heat kernel}
Fix $s \in (0,1)$. For $(t, x) \in (0, +\infty) \times \R^d$, we define the fractional heat kernel 
$$ P^{(s)}_d(t,x) := \int_{\R^d} \exp \left( - t  \abs{2\pi \xi}^{2s} \right)e^{2 \pi i x \cdot \xi} \, d \xi = \mathscr{F}_d^{-1} \left( \exp{ \left( - t \abs{2\pi (\cdot)}^{2s}\right)}\right) (x), $$
where $\mathscr{F}_d$ is the Fourier transform. 
\end{definition}

We mention \cite{Ga19}*{Chapter 16} and the references therein for a presentation of the topic. By a change of variables, it is easy to check that
\begin{equation} \label{eq: scaling of heat kernel}
    P^{(s)}_d (t,x) = t^{-\frac{d}{2s}} P^{(s)}_d \left (1, x t^{-\frac{1}{2s}}\right).  
\end{equation}
In analogy with the classical heat kernel, $P^{(s)}_d(t,x) >0$ for any $(t,x)$ (see \cite{Ga19}*{Proposition 16.3}). Since $\exp(- t \abs{\xi}^{2s})$ is a rapidly decaying function, then $P^{(s)}_d(t,\cdot)$ is smooth. However, due to lack of differentiability of  $\exp(- t \abs{\xi}^{2s})$ at $0$ for $s \in (0,1)$, then $P^{(s)}_d$ is not a Schwarz function. In particular, $P^{(s)}_d(1, \cdot)$ enjoys a polynomial decay, whereas the classical heat kernel decays exponentially. 

\begin{proposition} [\cite{Ga19}*{Proposition 16.5}] \label{p: decay of P^s}
Let $s \in (0,1)$ and $P^{(s)}_d(1, \cdot)$ be as in \cref{d: fractional heat kernel}. For any $x \in \R^d$ it holds that 
\begin{equation}
    \frac{C_1(d,s)}{1 + \abs{x}^{d+2s}} \leq P^{(s)}_d(1, x) \leq \frac{C_2(d,s)}{1 +\abs{x}^{d+2s}}. 
\end{equation}
\end{proposition}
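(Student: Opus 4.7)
The plan is to split into two regimes: $|x| \leq 1$ and $|x| \geq 1$. For the first regime I would use that $P^{(s)}_d(1,\cdot)$ is continuous by Riemann--Lebesgue (since $e^{-|2\pi\xi|^{2s}} \in L^1(\R^d)$), bounded, and strictly positive everywhere. Positivity is clear either from a maximum principle for the fractional heat equation or directly from the subordination representation used below. On the compact set $\{|x|\leq 1\}$ this gives both the upper and the lower bound with suitable constants.

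The main work is for $|x| \geq 1$, and the central tool is the subordination identity
\begin{equation*}
e^{-t\lambda^{s}} = \int_0^\infty e^{-\tau \lambda}\, \eta_s^t(\tau)\, d\tau, \qquad \lambda \geq 0,
\end{equation*}
where $\eta_s^t$ is the density of the one-sided $s$-stable distribution at time $t$. Setting $\lambda = |2\pi\xi|^2$, $t=1$, applying $\mathscr{F}_d^{-1}$ and using that the Fourier inverse of a Gaussian is a Gaussian yields
\begin{equation*}
P^{(s)}_d(1,x) = \int_0^\infty (4\pi\tau)^{-d/2}\, e^{-|x|^2/(4\tau)}\, \eta_s^1(\tau)\, d\tau,
\end{equation*}
i.e.\ $P^{(s)}_d(1,\cdot)$ is a Gaussian superposition against the stable subordinator density. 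Changing variables $\tau = |x|^2 \sigma$ extracts the self-similar scaling
\begin{equation*}
P^{(s)}_d(1,x) = |x|^{2-d} \int_0^\infty (4\pi\sigma)^{-d/2}\, e^{-1/(4\sigma)}\, \eta_s^1(|x|^2\sigma)\, d\sigma.
\end{equation*}

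The decay rate $|x|^{-d-2s}$ would then come from inserting the two-sided tail bound $\eta_s^1(\tau) \sim c_s\, \tau^{-1-s}$ as $\tau \to \infty$ (together with the super-polynomial decay of $\eta_s^1$ as $\tau\to 0^+$): the integrand in $\sigma$ is dominated, uniformly in $|x|\geq 1$, by a function integrable at $0$ thanks to $e^{-1/(4\sigma)}$ and at $\infty$ thanks to $\sigma^{-1-s}$. Dominated convergence produces the upper bound and Fatou the matching lower bound, each with an explicit constant given by $\int_0^\infty (4\pi\sigma)^{-d/2}e^{-1/(4\sigma)}\sigma^{-1-s}d\sigma$ times the constant in the tail of $\eta_s^1$. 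Merging with the $|x|\leq 1$ estimate by taking the worse constant then gives the stated inequality on all of $\R^d$.

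The step I expect to be the main obstacle is establishing the sharp two-sided asymptotic of $\eta_s^1$ at infinity; this is the classical polynomial-tail estimate for one-sided stable laws and can be obtained by inverting the Laplace transform $e^{-\lambda^s}$ via a keyhole contour in the complex plane, but it requires some care, especially to control the implicit constants uniformly in $s \in (0,1)$. An alternative route that avoids subordination altogether is a direct Fourier analysis: split the symbol $e^{-|2\pi\xi|^{2s}} = 1 + (e^{-|2\pi\xi|^{2s}}-1)$, recognize the leading singular behavior $-|2\pi\xi|^{2s}$ near the origin (whose distributional Fourier transform is $c_{d,s}|x|^{-d-2s}$), and dispatch the smoother remainder by repeated integration by parts; this also gives the claimed rate but is more tedious for the lower bound.
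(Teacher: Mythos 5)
The paper does not prove this proposition; it imports it directly from Garofalo's lecture notes (\cite{Ga19}*{Proposition 16.5}), so there is no internal proof to compare against. Your subordination argument is precisely the standard route to this estimate (going back to Blumenthal--Getoor and reproduced in the cited reference): the one-sided $s$-stable subordinator turns $P^{(s)}_d(1,\cdot)$ into a Gaussian mixture, the rescaling $\tau=|x|^2\sigma$ exposes the self-similarity, and the polynomial tail of $\eta_s^1$ delivers the $|x|^{-d-2s}$ rate. Two small remarks. First, the concern you raise about controlling constants uniformly in $s\in(0,1)$ is moot here: the proposition allows $C_1,C_2$ to depend on $s$. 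Second, to run the dominated-convergence step you need the \emph{global} one-sided bound $\eta_s^1(\tau)\le C_s\tau^{-1-s}$ for all $\tau>0$, not merely the asymptotic $\eta_s^1(\tau)\sim c_s\tau^{-1-s}$ as $\tau\to\infty$; this does hold (continuity and boundedness near $0$ plus the tail asymptotic), but it deserves to be stated, since it is exactly what makes the integrand dominated uniformly in $|x|\ge 1$. With that spelled out, the argument is complete, and in fact one can dispense with DCT/Fatou altogether: plugging the global two-sided bound $c_s\tau^{-1-s}\mathds{1}_{\{\tau\ge T_0\}}\le\eta_s^1(\tau)\le C_s\tau^{-1-s}$ directly into the rescaled integral gives both inequalities for $|x|\ge 1$ by elementary estimates, with no limiting procedure.
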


By the properties of the Fourier transform, for any index $j = 1, \dots, d$ we have  
\begin{equation} \label{eq:Fourier-der}
    \partial_j P^{(s)}_{d}(1,x) = 2 \pi i \mathscr{F}_d^{-1}\left(  \xi_j \exp\left(- \abs{2 \pi \xi}^{2s} \right)\right) (x). 
\end{equation}

Using the Bochner relation (see e.g. \cite{St70}*{Section 3.2}), we have 
\begin{equation}
    \mathscr{F}_d (\xi_j \exp(-\abs{\cdot}^{2s}))(x) = i x_j \mathscr{F}_{d+2} (\exp(-\abs{\cdot}^{2s})) (\tilde{x}), \label{eq: bochner's relation}
\end{equation}
where we set $\tilde{x} = (x, 0,0) \in \R^d \times \R \times \R$. Hence, combining \eqref{eq: bochner's relation} and \cref{p: decay of P^s} we obtain the following result. 

\begin{proposition} \label{p: decay of derivative of P^s} 
Let $s \in (0,1)$ and $P^{(s)}_1(1, \cdot)$ be as in \cref{d: fractional heat kernel}. For any $k \in \N$ it holds  
\begin{equation}
    \abs{\partial_x^k P^{(s)}_1(1,x)} \leq \frac{C(k,s)}{1+ \abs{x}^{k+1+2s}} \qquad \forall x \in \R. \label{eq: decay of n-th derivative of P^s}
\end{equation}
\end{proposition}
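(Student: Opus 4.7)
The plan is to express $\partial_x^k P^{(s)}_1(1,\cdot)$ as a finite sum of terms of the form $c_{k,j}\,x^{k-2j}\,P^{(s)}_{D_j}(1,(x,0,\dots,0))$ with $D_j=1+2(k-j)$, and then invoke \cref{p: decay of P^s} in each higher dimension $D_j$. Since $P^{(s)}_D(1,\cdot)$ is radial for every $D$, it is convenient to set $\tilde H_D(r):=P^{(s)}_D(1,(r,0,\dots,0))$, which extends to an even smooth function on all of $\R$. Combining \eqref{eq:Fourier-der} with the Bochner identity \eqref{eq: bochner's relation} applied to the radial function $\exp(-|2\pi\cdot|^{2s})$, and noting that $\mathscr{F}_{D+2}[\exp(-|2\pi\cdot|^{2s})](\tilde x)=P^{(s)}_{D+2}(1,\tilde x)$, one obtains for every $D\ge 1$ the one-step recursion
$$\tilde H_D'(r) \;=\; -\,2\pi\,r\,\tilde H_{D+2}(r), \qquad r\in\R.$$
In particular, for $D=1$ this gives $\partial_x P^{(s)}_1(1,x)=-2\pi\,x\,P^{(s)}_3(1,(x,0,0))$, which already handles $k=1$ in view of \cref{p: decay of P^s}.

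Iterating, a straightforward induction on $k$ shows that
$$\partial_x^k P^{(s)}_1(1,x) \;=\; \sum_{j=0}^{\lfloor k/2\rfloor} c_{k,j}\,x^{k-2j}\,\tilde H_{1+2(k-j)}(x),$$
for suitable real constants $c_{k,j}=c_{k,j}(s)$. The induction step is transparent: differentiating a term $x^{k-2j}\tilde H_{1+2(k-j)}(x)$ produces $(k-2j)\,x^{k-2j-1}\tilde H_{1+2(k-j)}(x)$ together with $x^{k-2j}\tilde H_{1+2(k-j)}'(x)=-2\pi\,x^{k+1-2j}\tilde H_{3+2(k-j)}(x)$, both of which fit into the same form at level $k+1$ after relabelling $j$. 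The precise values of the coefficients $c_{k,j}$ play no role in what follows.

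To conclude, \cref{p: decay of P^s} applied in dimension $D_j:=1+2(k-j)$ gives
$$\bigl|\,x^{k-2j}\,\tilde H_{D_j}(x)\,\bigr| \;\le\; \frac{C\,|x|^{k-2j}}{1+|x|^{D_j+2s}}.$$
Since $D_j+2s-(k-2j)=k+1+2s$, \emph{independent of} $j$, a direct case split between $|x|\le 1$ and $|x|\ge 1$ bounds the right-hand side by $C/(1+|x|^{k+1+2s})$ uniformly in $x$ and $j$. Summing over $j\in\{0,\dots,\lfloor k/2\rfloor\}$ yields \eqref{eq: decay of n-th derivative of P^s}. The only non-routine step is the derivation of the Bochner-type recursion for $\tilde H_D$; once available, both the induction and the pointwise estimate are elementary, and the crucial structural fact is that the effective decay exponent is the same in every summand.
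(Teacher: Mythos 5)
Your argument is essentially the paper's own proof: you derive the same Bochner-type recursion from \eqref{eq:Fourier-der} and \eqref{eq: bochner's relation}, prove by induction the same representation $\partial_x^k P^{(s)}_1(1,x)=\sum_i c_{i,k}\,x^{2i-k}P^{(s)}_{1+2i}(1,\tilde x)$ (your index $j$ is $k-i$), and conclude via \cref{p: decay of P^s} using the fact that the effective exponent $D_j+2s-(k-2j)=k+1+2s$ is independent of the summation index. The only difference is the indexing convention and the small explicit remark about the $|x|\le 1$ versus $|x|\ge 1$ split, which the paper leaves implicit.
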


\begin{proof}
We prove by induction that 
\begin{equation}
\partial^k_x P^{(s)}_1 (1, x) = \sum_{i= \lceil \frac{k}{2} \rceil }^k c_{i,k} x^{2i-k} P^{(s)}_{1+2i} (1, \tilde{x}), \qquad \forall x \in \R \label{eq: formula n-th derivative P^s}
\end{equation}
where $c_{i,k} $ are real-valued coefficients, $\lceil \cdot \rceil$ is the upper integer part and we denote by
\begin{equation}
    P^{(s)}_{d}(1, \tilde{x}) = P^{(s)}_{d}(1, (x, 0, \cdots, 0)) \qquad \forall x \in \R.
\end{equation}
Then, \eqref{eq: decay of n-th derivative of P^s} follows immediately by \eqref{eq: formula n-th derivative P^s} and \cref{p: decay of P^s}. To check the validity of \eqref{eq: formula n-th derivative P^s}, for $k=1$ by \eqref{eq: bochner's relation} we have 
\begin{equation}
    \partial_x P^{(s)}_1(1, x) = \tilde{c}_{1,1} x P^{(s)}_3(1, \tilde{x}). 
\end{equation}
Assume that $k$ is even, being the case of $k$ odd similar. Then, deriving \eqref{eq: formula n-th derivative P^s} we find explicit constants $\tilde{c}_{i,k}$ such that  
\begin{align}
    \partial^{k+1}_x P^{(s)}_1(1, x) & = \sum_{i= \lceil \frac{k}{2}\rceil +1 }^k c_{i,k} (2i-k) x^{2i-k-1} P^{(s)}_{1+2i}(1, \tilde{x}) + \sum_{i=\lceil \frac{k}{2}\rceil }^k \tilde{c}_{i,k} c_{i,k} x^{2(i+1)-k-1} P^{(s)}_{1+2(i+1)} (1, \tilde{x}) 
    \\ & = \sum_{i= \lceil \frac{k+1}{2}\rceil }^k c^1_{i,k+1} x^{2i-(k+1)} P^{(s)}_{1+2i}(1, \tilde{x}) + \sum_{i=\lceil \frac{k}{2}\rceil +1 }^{k+1} c_{i,k+1}^2 x^{2i-(k+1)} P^{(s)}_{1+2i} (1, \tilde{x}). 
\end{align}
\end{proof}

\subsection{Distance function} \label{ss: distance function}

Throughout this note, we adopt the following notation. Given an open set $E$, we denote by $\Sigma = \partial E$ and we let $\dist_{\Sigma}$ be the signed distance from $\Sigma$, i.e. 
\begin{equation}
    \dist_{\Sigma} (x) = \begin{cases}
        \inf \{ \abs{x-y} \colon y \in \Sigma \} & \text{ if } x \in E, 
        \\ - \inf \{ \abs{x-y} \colon y \in \Sigma \} & \text{ if } x \in E^c. 
    \end{cases}
\end{equation}

It is well known that the distance function is $1$-Lipschitz continuous and $\abs{ \nabla \dist_{\Sigma}(y)} =1$ at any point of differentiability. We refer to \cite{Ambrosio} and \cite{GT01}*{Section 14.6} for an overview of the properties of the distance function.  

\begin{definition} \label{d:principal coordinates}
Let $k \geq 2$. We say that $E \subset \R^d$ is an open set of class $C^k$ ($\partial E \in C^k$ in short) if for any $x_0 \in \Sigma = \partial E$ there exist $\delta > 0$ and a map $g \colon B_{\delta}^{d-1}\to \R$ of class $C^k$ such that, up to an affine isometry of the ambient space, it holds $x_0 =0$ and 
    \begin{align} \label{eq: parameterization 1}
        & T_{x_0} \Sigma = \mathrm{Span}(e_1,\dots,e_{d-1}), \\[1ex]
        \label{eq: parameterization 2}
        & \Sigma \cap \big(B_\delta^{d-1} \times (-\delta, \delta) \big)
        =
        \big\{(y, g(y)) \colon y \in B_\delta^{d-1} \big\}, \\[1ex]
        \label{eq: parameterization 2.5}
        & E \cap \big(B_\delta^{d-1} \times (-\delta, \delta) \big) = \big\{(y, t) \colon y \in B_\delta^{d-1}, t > g(y) \big\}.
\end{align} 
Moreover, we say that the map $Y := \mathrm{Id} \times g \colon B^{d-1}_\delta \to \R^d$ is a principal parameterization of $\Sigma$ around $x_0$ if $g$ satisfies 
    \begin{equation} \label{eq: parameterization 3}
        \nabla^2 g(0) = \mathrm{diag}(k_1, \dots, k_{d-1}), 
    \end{equation}
where $k_1, \dots, k_{d-1}$ denote the principal curvatures of $\Sigma$ at $x_0$ computed with respect to the outer unit normal. 
\end{definition}

\begin{remark}
Given $E \subset \R^d$ an open set of class $C^k$, then for any $x_0 \in \Sigma$ there exists a principal parameterization around $x_0$ according to \cref{d:principal coordinates}. Indeed, if $g$ satisfies \eqref{eq: parameterization 1}, \eqref{eq: parameterization 2} and \eqref{eq: parameterization 2.5}, then we find a linear isometry of $O : \R^{d-1} \to \R^{d-1}$ such that $g \circ O$ satisfies also~\eqref{eq: parameterization 3}.
\end{remark}

\begin{remark}
In the framework of \cref{d:principal coordinates}, then $\Sigma$ is locally the graph of $C^2$ function  $g \colon B^{d-1}_{\delta} \to \R$ and we have the following well-known formulas:
    \begin{gather}
        \label{eq:formula-tangent}
        T_{(y,g(y))} \Sigma
        = 
        \mathrm{Span}\big( (e_1, \partial_1 g(y)),\dots,(e_{d-1}, \partial_{d-1} g(y)) \big), \\[1.5ex] 
        \label{eq: formula normal} 
        N(y) = \frac{(-\nabla g(y), 1)}{\sqrt{1+\abs{\nabla g(y)}^2}}, \\ 
        \label{eq:mean-curv}
        H_{\Sigma}(y,g(y))
        =
        \mathrm{div} \left( \frac{\nabla g}{\sqrt{ 1 + \abs{\nabla g}^2}} \right)
        =
        \frac{\Delta g(y)}{\sqrt{ 1 + \abs{\nabla g(y)}^2}} - \frac{\nabla^2 g(y)[\nabla g(y), \nabla g(y)]}{(1+\abs{\nabla g(y)}^2)^{3/2}},
    \end{gather}
where $N(y)$ denotes the inner unit normal to $\Sigma$ at the point $(y,g(y))$ and $H_{\Sigma}(y,g(y))$ is the scalar mean curvature of $\Sigma$ at the point $(y,g(y))$ computed with respect to $-N(y)$. In addition, if the function $g$ satisfies
    \begin{equation}
        g(0)=0, \qquad \nabla g(0) = 0, \qquad \nabla^2 g(0) = \mathrm{diag}(k_1, \dots, k_{d-1}),
    \end{equation}
then the previous formulas at $0 \in \Sigma$ simplify to
    \begin{equation}
        \label{eq:formula-tangent-s}
        T_{0} \Sigma
        =
        \R^{d-1} \times \{ 0 \}, \qquad
        N(0) = (0,1), \qquad
        H_{\Sigma}(0)
        =
        \Delta g(0)
        =
        \sum_{i=1}^{d-1} k_i.
    \end{equation} 
\end{remark}

We recall some well-known properties of the signed distance function from an hypersurface (see for instance \cite{Ambrosio}, \cite{GT01}*{Lemma 14.16}, \cite{Modica}*{Lemma 3}).

\begin{lemma} \label{l: regularity of distance function}
Let $E \subset \R^d$ be a bounded open set of class $C^k$ for some $k \geq 2$. Then, there exists $\delta>0$ depending only on $\Sigma$ with the following properties.
\begin{itemize}
    \item [(i)] For any $x \in \Sigma_{\delta}$ there exists a unique point $\pi_{\Sigma}(x) \in \Sigma$ of minimal distance between $x$ and $\Sigma$. Moreover, the map $\pi_{\Sigma} \colon \Sigma_\delta \to \Sigma$ is of class $C^{k-1}$.
    \item [(ii)] For any $x \in \Sigma_\delta$ it holds that 
    \begin{equation}
    \nabla \dist_{\Sigma}(x) = N(\pi_{\Sigma}(x))  \label{eq: formula nabla dist},
    \end{equation}
    where $N$ is the inner unit normal to $\Sigma$. In particular, $\dist_{\Sigma} \in C^{k}(\Sigma_\delta)$. 
\end{itemize}
\end{lemma}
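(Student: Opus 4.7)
The plan is to exhibit $\Sigma_\delta$ as the diffeomorphic image of a normal tubular neighborhood of $\Sigma$ and then read off both conclusions from this structure. To this end, at each $x_0\in \Sigma$ I would fix a principal parameterization $Y\colon B_\delta^{d-1}\to \R^d$ as in \cref{d:principal coordinates}, with inner unit normal $N(y)$ of class $C^{k-1}$, and define the local normal map
\[
    \Psi(y,z) := Y(y) + z\, N(y), \qquad (y,z)\in B_\delta^{d-1}\times (-\delta,\delta).
\]
A direct computation using \eqref{eq:formula-tangent-s} and \eqref{eq: parameterization 3} shows $D\Psi(0,0)=\mathrm{Id}$, and more generally $\det D\Psi(y,z)=\prod_{i=1}^{d-1}(1-z\,k_i(y))+O(|y|)$. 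Shrinking $\delta$ so that $|z|\max_i|k_i(y)|<1/2$ on the chart, the inverse function theorem provides a local $C^{k-1}$ diffeomorphism.

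Next, since $\Sigma=\partial E$ is compact (as $E$ is bounded) and of class $C^k$ with $k\ge 2$, I would cover $\Sigma$ by finitely many such charts and patch them together. The key point is to upgrade local injectivity of $\Psi$ into global injectivity on $\Sigma\times(-\delta,\delta)$: by a standard reach-type argument, if two distinct normal rays from points $y_1,y_2\in \Sigma$ met at a common $x$ with $|x-y_i|<\delta$, then one of $y_1,y_2$ would fail to be the minimizer of the distance from $x$, contradicting a continuity argument once $\delta$ is taken smaller than half the reach of $\Sigma$ (which is strictly positive by compactness and $C^2$ regularity). This produces a single $\delta>0$ such that the maps $\Psi$ glue into a $C^{k-1}$ diffeomorphism onto $\Sigma_\delta$. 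The projection $\pi_\Sigma$ is then the composition of $\Psi^{-1}$ with the first-factor map $y\mapsto Y(y)$, and is thus of class $C^{k-1}$, giving (i).

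For (ii), I would differentiate the identity $\dist_\Sigma(x)^2=|x-\pi_\Sigma(x)|^2$ and use the orthogonality $x-\pi_\Sigma(x)\perp T_{\pi_\Sigma(x)}\Sigma$, which is immediate from the minimality of $\pi_\Sigma(x)$. The tangential contributions drop out via the chain rule, leaving $2\dist_\Sigma(x)\nabla \dist_\Sigma(x)=2(x-\pi_\Sigma(x))$; combined with $|x-\pi_\Sigma(x)|=|\dist_\Sigma(x)|$ and the sign convention of the signed distance, this yields $\nabla \dist_\Sigma(x)=N(\pi_\Sigma(x))$. Since $N\in C^{k-1}(\Sigma)$ and $\pi_\Sigma\in C^{k-1}(\Sigma_\delta)$, the gradient is $C^{k-1}$, hence $\dist_\Sigma\in C^k(\Sigma_\delta)$.

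I expect the main obstacle to be the passage from chart-by-chart local injectivity to a uniform $\delta$ valid on all of $\Sigma$ simultaneously. The inverse function theorem only gives chart-dependent neighborhoods, and one must combine compactness with a uniform upper bound on $\max_i|k_i|$ and a careful reach argument to exclude collisions between normal rays emanating from distant points of $\Sigma$. Once this uniform tubular neighborhood is secured, the regularity assertions reduce to straightforward chain-rule bookkeeping.
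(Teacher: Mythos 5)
The paper itself does not prove this lemma; it records it as a classical fact and points the reader to Ambrosio, Gilbarg--Trudinger (Lemma~14.16) and Modica (Lemma~3). Your sketch reconstructs the standard argument found in those references (normal exponential map $\Psi(y,z)=Y(y)+zN(y)$, inverse function theorem, compactness for a uniform tube, gradient formula via the squared distance), and the overall structure is sound. Two points are worth tightening. First, invoking ``half the reach of $\Sigma$'' is circular: positive reach is, by definition, exactly the statement that the closest-point projection is single-valued on a uniform tube, which is what part~(i) asserts; if you are permitted to cite positive reach for compact $C^2$ hypersurfaces you are already done, and otherwise you must prove it. The clean version is the dichotomy you already half-describe: if $\Psi(y_1,z_1)=\Psi(y_2,z_2)=x$ with $|z_i|<\delta$, then either $|Y(y_1)-Y(y_2)|<2\delta$, so for $\delta$ below a Lebesgue number the two preimages lie in one chart where the IFT forces them to coincide; or $|Y(y_1)-Y(y_2)|\ge\rho$ for some uniform $\rho>0$ coming from compactness, which is incompatible with both $|x-Y(y_i)|=|z_i|<\delta$ once $\delta<\rho/2$. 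Second, before writing $2\dist_\Sigma\nabla\dist_\Sigma=2(x-\pi_\Sigma(x))$ you need to know $\dist_\Sigma$ is differentiable, which your argument has not yet established at that stage; this follows for free from the tubular-neighborhood structure, since $\dist_\Sigma=\mathrm{pr}_2\circ\Psi^{-1}\in C^{k-1}(\Sigma_\delta)$, after which the identity $\nabla\dist_\Sigma=N\circ\pi_\Sigma\in C^{k-1}$ upgrades the regularity to $C^k$ as you intend.
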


We consider a smooth version of the signed distance, which will play a key role in our analysis. 

\begin{definition} \label{d:regular distance} 
Let $E$ be a bounded open set of class $C^2$ and let $ 0< \delta< \sfrac{1}{5}$ be such that $5\delta$ satisfies the properties of \cref{l: regularity of distance function}. We denote by $\beta_{\Sigma}: \R^d \to [-1,1]$ any function of class $C^2(\R^d)$ such that $\abs{\beta_{\Sigma} (x)} \in [4\delta, 1]$ for any $x \in \Sigma_{5\delta} \setminus \Sigma_{4\delta}$ and 
\begin{equation} \label{eq: regular distance} 
    \beta_{\Sigma}(x) = 
    \begin{cases}
        \dist_{\Sigma}(x) & x \in \Sigma_{4\delta}, 
        \\ \sgn(\dist_{\Sigma}(x)) & x \in \R^d \setminus \Sigma_{5\delta}. 
    \end{cases} 
\end{equation}

\end{definition}

\subsection{Uniform Fermi coordinates} \label{ss: uniform fermi coordinates}

We are interested in sets possessing parameterization with uniform bounds. Following \cite{GT01}*{Section~14.6}, we introduce a useful notation.

\begin{definition} \label{d: uniform principal coordinates} 
Let $E \subset \R^d$ be an open set. We say that $\Sigma = \partial E$ is of $(\delta,C,k)$-type if $E$ is of class $C^k$ and there exist positive constants $\delta$, $C$ such that for any $x_0 \in \Sigma$ there exists a principal parameterization $Y=\mathrm{Id} \times g \colon B_{\delta}^{d-1} \to \R^d$ around $x_0$ as in \cref{d:principal coordinates} such that 
    \begin{equation} \label{eq: parameterization 4}
        \norm{g}_{C^k\big(B_{\delta}^{d-1}\big)} \leq C.
    \end{equation}  
Moreover, letting $N$ be the inner unit normal defined by \eqref{eq: formula normal}, we define the Fermi coordinates $\Phi :  B_{\delta}^{d-1} \times (-\delta , \delta) \to \R^{d} $ around $x_0$ by setting
\begin{equation} \label{eq: fermi coordinates}
    \Phi(y,z) := Y(y) + z N(y).
\end{equation}
\end{definition}

\begin{remark} \label{r: bounded sets are C-k uniform}
It is clear that if $E$ is an open set of class $C^k$ and with compact boundary, then there exist positive constants $\delta_0$, $C_0$ such that $\Sigma$ is of $(\delta, C, k)$-type according to \cref{d: uniform principal coordinates} for any $\delta \le \delta_0$ and for any $C \ge C_0$. Since the proof follows by a straightforward compactness argument, we leave the details to the reader. 
\end{remark}

\begin{remark}
Let $E \subset \R^d$ be an open set of $(\delta, C, k)$-type for some $C, \delta >0$ and $k \geq 3$. With the notation of \cref{d: uniform principal coordinates}, for any $x_0 \in \Sigma$ the map $g$ can be expanded near $y=0$ as follows
\begin{align}
    \label{eq:expansion of g}
    g(y)
    & =
    \frac{1}{2} \sum_{i=1}^{d-1} k_i y_i^2 +  O(\abs{y}^3), \\[1ex]
    \label{eq:expansion of nabla g}
    \partial_i g(y) & =  k_i y_i +O(\abs{y}^2), \\[2ex]
    \label{eq:expansion of D^2 g - 1}
    \partial_{ij} g(y) & = \delta_{ij} k_i + O(\abs{y}).
\end{align}
The reminders satisfy uniform estimates with respect to $x_0 \in \Sigma$ thanks to \eqref{eq: parameterization 4}.  
\end{remark}

In view of proving \cref{t:fractional laplacian}, we need to compute integrals involving suitable powers of the Euclidean distance in small domains contained in the tubular neighbourhood of the boundary of a smooth set. Since it is natural to use Fermi coordinates as local charts of the tubular neighbourhood, it is convenient to write the Euclidean distance in terms of the Fermi coordinates. 

\begin{definition} \label{d: tangential distance}
Let $E \subset \R^d$ be an open set of $(\delta, C, k)$-type for some $C, \delta >0$ and $k \geq 3$, $x_0 \in \Sigma_\delta$, $x_0' = \pi_\Sigma(x_0)$ and $z_0 = \dist_\Sigma(x_0)$. Let $\Phi$ be Fermi coordinates around $x_0'$. For any $y \in B_\delta^{d-1}$, we define the (squared) tangential distance from $x_0$ by setting 
\begin{equation} \label{eq: tangential distance}
    \abs{(Y(y) - z_0 e_d)_\tau}^2 := \abs{Y(y) - z_0 e_d}^2 - \abs{(Y(y) - z_0 e_d)\cdot N(y)}^2. 
\end{equation}
\end{definition}

We refer to \cref{r: another tangential distance} for a geometric interpretation of the tangential distance. We recall some Taylor expansions that will be useful later on, when we integrate on domains contained in the tubular neighbourhood using Fermi coordinates. 

\begin{lemma} \label{l: taylor expansion 1} 
Let $E \subset \R^d$ be an open set such that $\Sigma = \partial E$ is of $(\delta, C, 3)$-type according to \cref{d: uniform principal coordinates}, for some $\delta, C >0$. Let $x_0 \in \Sigma_\delta$, let $x_0' = \pi_\Sigma(x_0)$ and let $z_0 = \dist_\Sigma(x_0)$. Let $\Phi$ be Fermi coordinates around $x_0'$ and $\abs{(Y(y)-z_0 e_d)_\tau}$ be the tangential distance given by \cref{d: tangential distance}. Then, the following estimates hold true: 
\begin{gather}
    \label{eq: almost orthogonality 1}
    Y(y) \cdot N(y) = - \frac{1}{2} \sum_{i=1}^{d-1} k_i y_i^2 + O(\abs{y}^3), \\[1ex]
    \label{eq: normal deviation 1}
    1 - N_d(y) = O(\abs{y}^2), \\[1.5ex] 
    \label{eq: formula nabla phi(0,z)}
    \nabla \Phi(0,z) = \mathrm{diag}(1- z k_1, \dots, 1- z k_{d-1}, 1), \\[1ex]
    \label{eq: formula determinant}
    \det( \nabla \Phi(y,z)) =  \prod_{i=1}^{d-1}(1- z k_i) + O(\abs{y} \abs{z}) + O(\abs{y}^2), \\[1ex]
    \abs{(Y(y) - z_0 e_d)_\tau}^2 = \sum_{1=1}^{d-1} y_i^2(1-k_i  z_0)^2 + O(\abs{z_0} \abs{y}^3) + O(\abs{y}^4). \label{eq: tangential distance 1}
\end{gather}
The error terms in \eqref{eq: almost orthogonality 1}, \eqref{eq: normal deviation 1}, \eqref{eq: formula determinant}, \eqref{eq: tangential distance 1} satisfy uniform bounds with respect to $x_0 \in \Sigma_\delta \cap E$. 
\end{lemma}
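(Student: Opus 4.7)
The plan is to unwind each identity by direct Taylor expansion. The starting ingredients are the principal parameterization ($\nabla g(0) = 0$ and $\nabla^2 g(0) = \mathrm{diag}(k_1,\ldots,k_{d-1})$), the expansions \eqref{eq:expansion of g}--\eqref{eq:expansion of D^2 g - 1}, and the explicit formula $N(y) = (-\nabla g(y), 1)/\sqrt{1 + |\nabla g(y)|^2}$ from \eqref{eq: formula normal}. Uniformity of the error terms with respect to $x_0 \in \Sigma_\delta$ will be inherited from \eqref{eq: parameterization 4}, which provides a common $C^3$ bound for all the local defining functions $g$.

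For \eqref{eq: almost orthogonality 1} and \eqref{eq: normal deviation 1}, I would first write $Y(y)\cdot N(y) = (g(y) - y\cdot\nabla g(y))/\sqrt{1+|\nabla g(y)|^2}$. The numerator equals $-\tfrac{1}{2}\sum k_i y_i^2 + O(|y|^3)$ since $g(y) = \tfrac{1}{2}\sum k_i y_i^2 + O(|y|^3)$ and $y\cdot\nabla g(y) = \sum k_i y_i^2 + O(|y|^3)$, while $|\nabla g(y)| = O(|y|)$ yields $\sqrt{1+|\nabla g|^2} = 1 + O(|y|^2)$; this second fact also produces \eqref{eq: normal deviation 1}.

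For \eqref{eq: formula nabla phi(0,z)}, I would differentiate $\Phi(y,z) = Y(y) + zN(y)$ and evaluate at $y = 0$: since $\nabla g(0) = 0$, we have $\partial_i Y(0) = e_i$, $\partial_z\Phi = N(0) = e_d$, and the quotient-rule derivative of $N$ at $y = 0$ collapses (the derivative of the denominator vanishes), giving $\partial_i N(0) = (-\partial_i\nabla g(0), 0) = -k_i e_i$, which yields the diagonal form. Then for \eqref{eq: formula determinant} I would Taylor expand the determinant in $y$ at fixed $z$: the $O(|y|^2)$ remainder is automatic, while the linear-in-$y$ term is handled through Jacobi's formula $\partial_{y_j}\det(A) = \det(A)\,\mathrm{tr}(A^{-1}\partial_{y_j} A)$. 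At $y = 0$, for $i \leq d-1$ the $i$-th diagonal entry of $A^{-1}\partial_{y_j}\nabla\Phi(0,z)$ equals $(1-zk_i)^{-1}$ times the $i$-th component of $\partial_{ij} Y(0) + z\partial_{ij} N(0)$; since $\partial_{ij} Y(0) = k_i\delta_{ij} e_d$ has vanishing $i$-th component, this entry is $O(|z|)$, and the $d$-th diagonal entry vanishes because $\partial_{y_j} N^d(0) = 0$. Consequently the trace is $O(|z|)$, producing the mixed $O(|y||z|)$ term.

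The hard part will be \eqref{eq: tangential distance 1}, where one has to retain the subleading $z_0^2\sum k_i^2 y_i^2$ correction. I would compute $|Y(y) - z_0 e_d|^2 = |y|^2 + (g(y) - z_0)^2 = |y|^2 + z_0^2 - z_0\sum k_i y_i^2 + O(|z_0||y|^3) + O(|y|^4)$. For $(Y - z_0 e_d)\cdot N = Y\cdot N - z_0 N_d$ one needs the sharper expansion $N_d(y) = 1 - \tfrac{1}{2}\sum k_i^2 y_i^2 + O(|y|^3)$, obtained from $|\nabla g(y)|^2 = \sum k_i^2 y_i^2 + O(|y|^3)$, combined with \eqref{eq: almost orthogonality 1}; squaring yields $((Y-z_0 e_d)\cdot N)^2 = z_0^2 + z_0\sum k_i y_i^2 - z_0^2\sum k_i^2 y_i^2 + O(|z_0||y|^3) + O(|y|^4)$. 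Subtracting makes the $z_0^2$ and the two $\pm z_0\sum k_i y_i^2$ terms cancel, leaving $|y|^2 - 2z_0\sum k_i y_i^2 + z_0^2\sum k_i^2 y_i^2 = \sum y_i^2(1 - k_iz_0)^2$ modulo the claimed remainder. The crucial subtlety is that replacing \eqref{eq: normal deviation 1} by $N_d = 1$ (i.e.\ dropping its quadratic term) would leave an unwanted $O(z_0^2|y|^2)$ contribution that \emph{cannot} be absorbed into $O(|z_0||y|^3) + O(|y|^4)$; carrying one extra order in the expansion of $N_d$ is therefore indispensable.
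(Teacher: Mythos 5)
Your computations for \eqref{eq: almost orthogonality 1}, \eqref{eq: normal deviation 1}, \eqref{eq: formula nabla phi(0,z)} and \eqref{eq: tangential distance 1} match the paper's proof in both route and substance. In particular, your observation that one must carry the $-\tfrac{1}{2}\sum k_i^2 y_i^2$ correction in $N_d$ (and that dropping it would leave an unabsorbable $O(z_0^2|y|^2)$ term) is exactly the point behind the paper's keeping of the $-|\nabla g|^2$ term when it expands $(1+|\nabla g|^2)^{-1}$; the cancellations you produce by splitting into $|Y-z_0e_d|^2$ and $((Y-z_0 e_d)\cdot N)^2$ are the same ones the paper obtains in a single pass.

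The only genuine divergence is \eqref{eq: formula determinant}. The paper expands each \emph{entry} of $\nabla\Phi(y,z)$ to zeroth order in $y$ with an explicit error (which requires only $\nabla^2 g$ Lipschitz, i.e., $g\in C^3$), and then computes the determinant of the resulting matrix by cofactor expansion along the last row; the $O(|y||z|)$ and $O(|y|^2)$ remainders come out of the polynomial structure of the determinant, not from differentiating it. You instead Taylor expand the scalar function $y\mapsto\det\nabla\Phi(y,z)$ to first order and evaluate the linear coefficient by Jacobi's formula. Your evaluation of $\nabla_y\det\nabla\Phi(0,z)$ is correct, but the claim that ``the $O(|y|^2)$ remainder is automatic'' is not quite: a uniform Lagrange-type $O(|y|^2)$ remainder requires $\det\nabla\Phi$ to be $C^{1,1}$ in $y$, and since $\partial_{y_j}\nabla\Phi$ brings in $\nabla^3 g$, this would need $g\in C^{3,1}$ rather than the assumed $(\delta,C,3)$-type, i.e.\ $g\in C^3$. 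Under $C^3$ alone you get $\nabla_y\det\nabla\Phi$ continuous but not Lipschitz, hence only an $o(|y|)$ remainder. The fix is precisely the paper's route: Taylor expand the entries (which touches only $\nabla^2 g$) and then take the determinant. Uniformity with respect to $x_0$, which you and the paper both obtain from the uniform $C^3$ bound \eqref{eq: parameterization 4}, is otherwise handled correctly.
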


\begin{remark} \label{r: another tangential distance}
With the notation of \cref{d: tangential distance}, we have that $x_0 = \Phi(0, z_0)$. By completing the squares with respect to $z$, it is easy to check that 
\begin{align}
    \abs{\Phi(y,z) -\Phi(0,z_0)} & = \abs{z-z_0 + Y(y)\cdot N(y) + z_0(1-N_d(y))}^2 + \abs{(Y(y) - z_0 e_d)_{\tau}}^2. 
\end{align}
Moreover, by \cref{l: taylor expansion 1}, we infer that
\begin{equation}
    \abs{z- z_0 + Y(y)\cdot N(y) + z_0(1- N_d(y))} = \abs{z- z_0} + O(\abs{y}^2),
\end{equation} 
which is the size of the ``normal" component of $\Phi(y, z)- \Phi(0, z_0)$. By \eqref{eq: tangential distance 1}, $\abs{(Y(y) - z_0 e_d)_{\tau}}$ is proportional to the size of the ``tangential" variable stretched by the curvature of $\Sigma$ at $\pi_\Sigma (x_0)$.  
\end{remark}

\begin{proof} [Proof of \cref{l: taylor expansion 1}]
The proof is a direct computation and we include it for the reader's convenience. By \eqref{eq:expansion of g}, \eqref{eq:expansion of nabla g}, we have 
\begin{align}
    Y(y) \cdot N(y) & = \frac{g(y) - \nabla g(y) \cdot y}{\sqrt{1 + \abs{\nabla g(y)}^2}} = \left( - \frac{1}{2} \sum_{i=1}^{d-1} y_i^2 k_i  + O(\abs{y}^3) \right) \left( 1 + O(\abs{y}^2) \right), 
\end{align}
thus proving \eqref{eq: almost orthogonality 1}. Similarly, we check \eqref{eq: normal deviation 1}, that is 
\begin{equation}
    1- N_d(y) = 1- \frac{1}{\sqrt{1 + \abs{\nabla g(y)}^2 }} = O(\abs{\nabla g(y)}^2) = O(\abs{y}^2 ).  
\end{equation}
In order to check \eqref{eq: formula nabla phi(0,z)} and \eqref{eq: formula determinant}, computing explicitly $\nabla \Phi(y,z)$, we have  
\begin{equation}
\nabla \Phi(y,z)  = 
\begin{pmatrix}
    1+ z \partial_1 N_1 & z \partial_2 N_1 & \cdots & z \partial_{d-1} N_1 & N_1
    \\ z \partial_1 N_2 & 1+ z \partial_2 N_2 & \cdots & z \partial_{d-1} N_2 & N_2
    \\ \vdots & \vdots & \ddots & \vdots & \vdots 
    \\ z \partial_1 N_{d-1} & z \partial_2 N_{d-1} & \cdots & 1+ z \partial_{d-1} N_{d-1} & N_{d-1}
    \\ \partial_1 g + z \partial_1 N_d & \partial_2 g + z \partial_{2} N_{d} & \cdots & \partial_{d-1}g + z \partial_{d-1} N_d & N_d
\end{pmatrix},
\end{equation}
Then, using \eqref{eq:expansion of nabla g}, \eqref{eq:expansion of D^2 g - 1}, for $i, j = 1, \dots, d-1$, we compute 
\begin{equation}
    \partial_i N_j
    =
    \frac{\partial_{ik} g \partial_i g \partial_k g - \partial_{ij} g (1 + \abs{\nabla g}^2)}{(1+ \abs{\nabla g}^2)^{3/2}} = -\delta_{ij} k_i + O(\abs{y}), 
\end{equation}
\begin{equation}
    \partial_i g + z \partial_i N_d =  \left( 1- \frac{z}{(1+ \abs{\nabla g}^2)^{3/2}} \right) \partial_i g = O(\abs{y}).  
\end{equation}
\begin{equation}
    N_i = \frac{-\partial_i g}{\sqrt{1 + \abs{\nabla g}^2}} = O(\abs{y}), \ \ \ N_d = 1+ O(\abs{y}^2) 
\end{equation}
where $g, N$ are always computed at $y$. Hence, we obtain that 
\begin{equation}
\nabla \Phi(y,z)  = 
\begin{pmatrix}
    1 - z k_1 + O(\abs{y} \abs{z}) &  \cdots & O(\abs{y}\abs{z}) & O(\abs{y})
    \\ O(\abs{y} \abs{z}) &  \cdots & O(\abs{y} \abs{z}) & O(\abs{y})
    \\ \vdots &  \ddots & \vdots & \vdots 
    \\ O(\abs{y} \abs{z}) & \cdots & 1 -  z k_{d-1} + O(\abs{y} \abs{z}) & O(\abs{y})
    \\ O(\abs{y}) &  \cdots & O(\abs{y}) & 1+ O(\abs{y}^2)
\end{pmatrix}. 
\end{equation}
Evaluating at $(0,z)$, then \eqref{eq: formula nabla phi(0,z)} is proved. To compute the determinant, expanding with respect to the last row, it results that 
\begin{equation}
    \det(\nabla \Phi(y,z)) = \prod_{i=1}^{d-1} (1- z k_i + O(\abs{y}\abs{z})) + O(\abs{y}^2) + O(\abs{y} \abs{z}),
\end{equation}
thus proving \eqref{eq: formula determinant}. To conclude, we check the validity of \eqref{eq: tangential distance 1}. By \eqref{eq: tangential distance}, \eqref{eq:expansion of g}, \eqref{eq:expansion of nabla g} it holds  
\begin{align}
    & \abs{(Y(y) - z_0 e_d )_\tau}^2  = \sum_{i=1}^{d-1} y_i^2 + (g(y) - z_0)^2 - \frac{1}{1 + \abs{\nabla g(y)}^2} \left( - \sum_{i=1}^{d-1} y_i \partial_i g(y) + (g(y) - z_0) \right)^2
    \\ & = \abs{y}^2 + (g(y)-z_0)^2 - (1- \abs{\nabla g(y)}^2 + O(\abs{y}^4)) \left( - 2g(y) + O(\abs{y}^3) + g(y)-z_0 \right)^2
    \\ & = \abs{y}^2 + (g(y)-z_0)^2 - (g(y)+z_0 + O(\abs{y}^3))^2  + \abs{\nabla g(y)}^2 (g(y)+z_0 + O(\abs{y}^3))^2 + O(\abs{y}^4) 
    \\ & = \abs{y}^2 + (g(y)-z_0)^2 - (g(y)+z_0)^2 + \sum_{i=1}^{d-1} y_i^2 k_i ^2  (g(y)+z_0 + O(\abs{y}^3))^2  + O(\abs{z_0} \abs{y}^3)
    \\ & = \abs{y}^2 - 4g(y) z_0 + \sum_{i=1}^{d-1}y_i^2 k_i ^2 z_0^2 + O(\abs{z_0} \abs{y}^3) + O(\abs{y}^4)
    \\ & = \abs{y}^2 - 2 \sum_{i=1}^{d-1} y_i^2 k_i  z_0 + \sum_{i=1}^{d-1}y_i^2 k_i ^2 z_0^2 + O(\abs{z_0} \abs{y}^3) + O(\abs{y}^4),
\end{align}
thus proving \eqref{eq: tangential distance 1}. 
\end{proof}

The following lemma is needed to justify the change of variables in the proof of \cref{t:fractional laplacian}. 

\begin{lemma} \label{l:inner ball} 
Under the assumptions of \cref{l: taylor expansion 1}, there exists a constant $\Lambda_0\geq 1$ depending on $\Sigma$ such that for any $\Lambda \geq \Lambda_0$ for any $x_0 \in \Sigma_{\sfrac{\delta}{10\Lambda}}$ it holds that 
\begin{equation}                        
\mathscr{B}_{\sfrac{\delta}{\Lambda}}^{d-1}(z_0) := \left\{ y \in \R^{d-1} \colon \abs{(\mathrm{Id}- z_0 \nabla^2 g(0)) y} < \sfrac{\delta}{\Lambda} \right\} \subset B_{\sfrac{\delta}{2} }^{d-1}, \label{eq: outer ball 1}
\end{equation}
where as before $z_0 = \mathrm{dist}_{\Sigma}(x_0)$. In addition, for any $y \in \mathscr{B}_{\sfrac{\delta}{\Lambda} }^{d-1}(z_0)$ it holds that 
\begin{equation}
    \mathscr{I}_{\sfrac{\delta}{\Lambda} }(y,z_0) := \left\{ z \in \R \colon \abs{z-z_0 + Y(y)\cdot N(y) + z_0(1-N_d(y))} < \sfrac{\delta}{\Lambda} \right\} \subset \left(-\sfrac{\delta}{2}, \sfrac{\delta}{2} \right). \label{eq: outer ball 2}
\end{equation}
Then, denoting by 
\begin{equation}
    \mathcal{B}_{\sfrac{\delta}{\Lambda}} (z_0) := \left\{ (y,z) \in \mathscr{B}_{\sfrac{\delta}{\Lambda}}^{d-1}(z_0) \times \R \colon z \in \mathscr{I}_{\sfrac{\delta}{\Lambda}}(y, z_0) \right\},  \label{eq: domain B}
\end{equation} 
it holds that $\mathcal{B}_{\sfrac{\delta}{\Lambda}}(z_0) \subset B_{\sfrac{\delta}{2}}^{d-1}\times \left(-\sfrac{\delta}{2}, \sfrac{\delta}{2} \right)$.
Finally, we have that $B_{\sfrac{\delta}{10 \Lambda}} (x_0) \subset \Phi (\mathcal{B}_{\sfrac{\delta}{\Lambda}}(z_0))$.
\end{lemma}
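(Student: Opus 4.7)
The plan is to treat the four inclusions in order: the first three are essentially direct consequences of the uniform estimates supplied by the $(\delta, C, 3)$-type hypothesis combined with the Taylor expansions of \cref{l: taylor expansion 1}, while the last one is the real geometric content and is where a quantitative use of the Fermi chart as a diffeomorphism is needed.

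For \eqref{eq: outer ball 1}, I would first pick $\Lambda_0$ large enough that $C\,\sfrac{\delta}{10\Lambda_0} \le \sfrac{1}{2}$, where $C$ is the uniform bound on the principal curvatures coming from \cref{d: uniform principal coordinates}. Since $|z_0| < \sfrac{\delta}{10\Lambda}$ this forces $|1 - z_0 k_i| \ge \sfrac{1}{2}$ for every $i$, hence $|y| \le 2\,|(\mathrm{Id} - z_0 \nabla^2 g(0))y| < 2\,\sfrac{\delta}{\Lambda} < \sfrac{\delta}{2}$, provided $\Lambda \ge 4$. Claim \eqref{eq: outer ball 2} then follows because, using the just-obtained bound on $|y|$ together with \eqref{eq: almost orthogonality 1} and \eqref{eq: normal deviation 1}, the quantity $Y(y)\cdot N(y) + z_0(1-N_d(y))$ is of size $O\bigl((\sfrac{\delta}{\Lambda})^2\bigr)$, so $|z| \le \sfrac{\delta}{\Lambda} + \sfrac{\delta}{10\Lambda} + O\bigl((\sfrac{\delta}{\Lambda})^2\bigr) < \sfrac{\delta}{2}$ for $\Lambda_0$ sufficiently large. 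The inclusion $\mathcal{B}_{\sfrac{\delta}{\Lambda}}(z_0) \subset B_{\sfrac{\delta}{2}}^{d-1} \times (-\sfrac{\delta}{2}, \sfrac{\delta}{2})$ is the immediate concatenation of these two facts.

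The substantive step is $B_{\sfrac{\delta}{10\Lambda}}(x_0) \subset \Phi(\mathcal{B}_{\sfrac{\delta}{\Lambda}}(z_0))$. Given $x \in B_{\sfrac{\delta}{10\Lambda}}(x_0)$, I first note that $|x - x_0'| \le |x - x_0| + |z_0| < \sfrac{\delta}{5\Lambda}$, so $x$ sits well inside the range of $\Phi$. Because the uniform $(\delta, C, 3)$-type assumption makes $\Phi$ a $C^2$-diffeomorphism of $B_\delta^{d-1} \times (-\delta, \delta)$ onto a tubular neighbourhood with derivative and inverse-derivative bounds independent of $x_0' \in \Sigma$, a quantitative inverse function theorem yields a Lipschitz constant $L = L(\Sigma)$ for $\Phi^{-1}$, and hence a unique preimage $(y, z) = \Phi^{-1}(x)$ satisfying $|y| + |z - z_0| \le L\,|x - x_0| \le L\,\sfrac{\delta}{10\Lambda}$.

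To verify that this $(y, z)$ lies in $\mathcal{B}_{\sfrac{\delta}{\Lambda}}(z_0)$, I would invoke the decomposition recorded in \cref{r: another tangential distance},
\begin{equation}
|\Phi(y,z) - \Phi(0,z_0)|^2 = \bigl(z - z_0 + Y(y)\cdot N(y) + z_0 (1 - N_d(y))\bigr)^2 + |(Y(y) - z_0 e_d)_\tau|^2,
\end{equation}
noting that both summands on the right are bounded by $(\sfrac{\delta}{10\Lambda})^2$. The bound on the first summand is exactly the membership condition for $\mathscr{I}_{\sfrac{\delta}{\Lambda}}(y, z_0)$. For the tangential piece, I would combine the bound on the second summand with the expansion \eqref{eq: tangential distance 1} and the a priori estimate $|y| \lesssim \sfrac{\delta}{\Lambda}$ to conclude $|(\mathrm{Id} - z_0 \nabla^2 g(0))y|^2 \le (\sfrac{\delta}{10\Lambda})^2 + O\bigl((\sfrac{\delta}{\Lambda})^4\bigr) < (\sfrac{\delta}{\Lambda})^2$, enlarging $\Lambda_0$ if necessary. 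I expect the main obstacle to be precisely this last step: the expansion \eqref{eq: tangential distance 1} carries remainders proportional to $|y|^3$ and $|y|^4$, so controlling $|(\mathrm{Id} - z_0 \nabla^2 g(0))y|$ by the tangential distance is circular without an a priori Lipschitz-type bound on $|y|$; that bound is exactly what the uniform invertibility of $\Phi$ in the $(\delta, C, 3)$-type framework provides.
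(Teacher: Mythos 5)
Your treatment of the first three inclusions matches the paper's: bound $|y|$ via the uniform curvature bound $|k_i|\le C$ and $|z_0|<\sfrac{\delta}{10\Lambda}$, then combine this with the Taylor estimates of \cref{l: taylor expansion 1} and the triangle inequality to control $|z|$, taking $\Lambda_0$ large enough that both chains close. The genuine divergence is in the final inclusion $B_{\sfrac{\delta}{10\Lambda}}(x_0)\subset\Phi(\mathcal{B}_{\sfrac{\delta}{\Lambda}}(z_0))$. The paper proves it by a topological-degree argument: the homotopy $\widehat{\Phi}_t$ between $\Phi-x_0$ and the affine map $(y,z)\mapsto(y,z-z_0)$ is shown, via \eqref{eq: formula nabla phi(0,z)}, to avoid the target ball $B_{\sfrac{\delta}{10\Lambda}}(0)$ on the boundary of the rectangle $B^{d-1}_{\sfrac{\delta}{2\Lambda}}\times\left(z_0-\sfrac{\delta}{2\Lambda},z_0+\sfrac{\delta}{2\Lambda}\right)$ for $\Lambda$ large, so the degree is $1$ and $\Phi$ covers the ball; a third quantitative condition then fits the rectangle inside $\mathcal{B}_{\sfrac{\delta}{\Lambda}}(z_0)$. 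You instead appeal to a quantitative inverse function theorem, extracting a uniformly Lipschitz $\Phi^{-1}$ and using the a priori bound $|y|+|z-z_0|\lesssim|x-x_0|$ to break the circularity you correctly flag in the error terms of \eqref{eq: tangential distance 1}. Both routes work. The degree argument buys robustness: it only requires a boundary estimate on one rectangle, never a global diffeomorphism property for $\Phi$ or any inverse Lipschitz bound, so it sidesteps injectivity entirely. Your IFT route is more elementary in flavour, but it hinges on the uniform bounded invertibility of $\nabla\Phi$ on the whole rectangle, global injectivity there, and a uniform Lipschitz constant for $\Phi^{-1}$; in this setting that package is best justified not by the $(\delta,C,3)$-bounds directly (which only control $\nabla\Phi$, not its inverse or injectivity) but by recalling that $\Phi^{-1}$ is, in local coordinates, the pair $(\pi_\Sigma,\dist_\Sigma)$, whose uniform regularity is the content of \cref{l: regularity of distance function}. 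Making that appeal explicit would close your argument.
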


\begin{proof}
Let $\Lambda \geq 1$ be a constant, which will be fixed shortly. We remark that $\abs{ k_i } \leq C $ for any $i=1, \dots, d-1$ (see~\eqref{eq: parameterization 4}). We know that $\abs{z_0}\leq \sfrac{\delta}{10 \Lambda}$, therefore for any $y \in \mathscr{B}_{\sfrac{\delta}{\Lambda}}^{d-1}(z_0)$, by the triangular inequality, it is clear that  
\begin{equation}
    \abs{y} \leq \frac{\delta}{\Lambda} \left( 1- \frac{C \delta}{10 \Lambda} \right)^{-1}. \label{eq: condition 0}
\end{equation}
Hence, \eqref{eq: outer ball 1} is satisfied provided that 
\begin{equation}
    \frac{\delta}{\Lambda} \left( 1- \frac{C \delta }{10 \Lambda} \right)^{-1} \leq \frac{\delta}{2}. \label{eq: condition 1}
\end{equation}
From~\cref{l: taylor expansion 1}, there exists a constant $\bar{C} > 0$ depending on $\Sigma$ such that for any $\Lambda \ge 1$ we have
\begin{equation}
    \abs{Y(y) \cdot N(y) + z_0( 1- N_d(y) )} \leq \bar{C} \abs{y}^2 \qquad \forall y \in \mathscr{B}^{d-1}_{\sfrac{\delta}{\Lambda}}(z_0).  \label{eq: inner variation} 
\end{equation} 
Thus, given $y \in \mathscr{B}^{d-1}_{\sfrac{\delta}{\Lambda}}(z_0)$ and $ z \in \mathscr{I}_{\sfrac{\delta}{\Lambda}}(y, z_0)$, by the triangular inequality,~\eqref{eq: condition 0} and~\eqref{eq: inner variation} it holds that 
\begin{equation}
    \abs{z} \leq \frac{\delta}{\Lambda} + \frac{\delta}{10 \Lambda} + \bar{C} \frac{\delta^2}{\Lambda^2} \left( 1- \frac{C \delta }{10 \Lambda} \right)^{-2}. \label{eq: condition 0 bis}
\end{equation}
Hence, we infer that \eqref{eq: outer ball 2} is satisfied provided that 
\begin{equation}
    \frac{\delta}{\Lambda} + \frac{\delta}{10 \Lambda} + \bar{C} \frac{\delta^2}{\Lambda^2} \left( 1- \frac{C \delta }{10 \Lambda} \right)^{-2} \leq \frac{\delta}{2}. \label{eq: condition 2}
\end{equation}
To summarize, if $\Lambda$ satisfies \eqref{eq: condition 1} and \eqref{eq: condition 2}, then $\mathcal{B}_{\sfrac{\delta}{\Lambda}}(z_0) \subset B_{\sfrac{\delta}{2}}^{d-1}\times \left(-\sfrac{\delta}{2}, \sfrac{\delta}{2}\right)$. More precisely, using again the triangular inequality,~\eqref{eq: condition 0} and~\eqref{eq: inner variation}, it can be checked in the same way that $ B^{d-1}_{\sfrac{\delta}{2\Lambda}} \times \left( z_0 - \sfrac{\delta}{2\Lambda} , z_0 + \sfrac{\delta}{2\Lambda} \right) \subset \mathcal{B}_{\sfrac{\delta}{\Lambda}}(z_0) $ provided that the following condition is satisfied: 
\begin{equation}
    \max \left\{ \frac{\delta}{2\Lambda}\left( 1+ \frac{C \delta}{10 \Lambda} \right) , \frac{\delta}{2\Lambda} + \bar{C} \frac{\delta^2}{4\Lambda^2} \right\} \leq \frac{\delta}{\Lambda}. \label{eq: condition 3}
\end{equation}
Lastly, to prove that $B_{\sfrac{\delta}{10 \Lambda}} (x_0) \subset \Phi (\mathcal{B}_{\sfrac{\delta}{\Lambda}}(z_0))$ for $\Lambda$ sufficiently large, for any $t \in [0,1]$ we define the map $\widehat{\Phi}_t \colon B^{d-1}_{\sfrac{\delta}{2\Lambda}} \times \left( z_0 - \sfrac{\delta}{2\Lambda} , z_0 + \sfrac{\delta}{2\Lambda} \right) \to \R^d$ as follows
    \begin{equation}
        \label{eq:Phi-hom}
        \widehat{\Phi}_t(y,z) := t (\Phi(y,z) - x_0) + (1-t) (y,z-z_0). 
    \end{equation}
Hence, using~\eqref{eq: formula nabla phi(0,z)} we shall write 
\begin{equation} \label{eq:deg-bndry-est}
     \widehat{\Phi}_t(y,z) 
     =
     (y, z-z_0) - t z_0 (k_1 y_1, \dots, k_{d-1} y_{d-1}, 0)
     + O(\abs{y}^2 + \abs{z-z_0}^2).  
\end{equation}
The previous formula implies that if $\Lambda$ is sufficiently large, then 
    \begin{equation} \label{eq:away-bndry}
        |\widehat{\Phi}_t(y,z)| > \sfrac{\delta}{10 \Lambda}, \qquad \forall t \in [0,1], \forall (y,z) \in 
        \partial \left( B_{\sfrac{\delta}{2 \Lambda}}^{d-1} \times (z_0 - \sfrac{\delta}{ 2 \Lambda}, z_0 + \sfrac{\delta}{2 \Lambda}) \right). 
    \end{equation}
Therefore, for every $p \in B_{\sfrac{\delta}{ 10 \Lambda}}(0)$, the standard properties of the degree imply that
    \begin{equation}
        \mathrm{deg}\Big(\widehat{\Phi}_1,p,B_{\sfrac{\delta}{2 \Lambda}}^{d-1} \times (z_0 - \sfrac{\delta}{ 2 \Lambda}, z_0 + \sfrac{\delta}{2 \Lambda})\Big)
        = \mathrm{deg}\Big(\widehat{\Phi}_0,p,B_{\sfrac{\delta}{2 \Lambda}}^{d-1} \times (z_0 - \sfrac{\delta}{ 2 \Lambda}, z_0 + \sfrac{\delta}{2 \Lambda})\Big)
        = 1,
    \end{equation}
and $B_{\sfrac{\delta}{10 \Lambda}} (x_0) \subset \Phi (\mathcal{B}_{\sfrac{\delta}{\Lambda}}(z_0))$.
In particular, \eqref{eq: condition 1}, \eqref{eq: condition 2}, \eqref{eq: condition 3} and~\eqref{eq:away-bndry} are satisfied if $\Lambda$ is large enough. 
\end{proof}

\subsection{Approximation of sets} 
The proof of Theorem~\ref{thm:main} is more direct when the set $E$ is smooth and intersects the boundary of
$\Omega$ transversely (in a measure theoretic sense). To handle the general case, we approximate $E$ with smooth bounded open sets such that both the Perimeter and the Willmore energy of the approximating sets in $\Omega$ converge to those of $E$.
When $\Omega = \R^d$ there are several ways to construct such an approximation, see e.g.~\cite{Antonini} and the references therein. In the following lemma we show that the same conclusion holds when $\Omega$ is a bounded open set of class~$C^1$. Moreover, the approximating sets that we consider intersect the boundary of $\Omega$ transversely.

\begin{lemma} \label{l:approximation}
    Let $E \subset \R^d$ be a bounded open set with $\partial E \in C^2$. For any bounded open set $\Omega \subset \R^d$ with $\partial \Omega \in C^1$, there exists a sequence $\{ E_j \}_{j \in \mathbb{N}}$ of smooth bounded open sets of $\R^d$ such that 
    \begin{enumerate} [label=($A1$),ref=$A1$]  
        \item\label{approx 1} $\displaystyle \lim_{j \to \infty} \abs{E_j \Delta E} = 0$, 
    \end{enumerate}
    \begin{enumerate} [label=($A2$),ref=$A2$] 
        \item \label{approx 2} $\displaystyle \lim_{j \to \infty} \mathrm{Per}(E_j,\Omega) = \mathrm{Per}(E,\Omega)$,
    \end{enumerate}
    \begin{enumerate} [label=($A3$),ref=$A3$] 
        \item \label{approx 3} $\displaystyle \lim_{j \to \infty} \mathcal{W}(\partial E_j, \Omega) = \mathcal{W}(\partial E, \Omega)$,
    \end{enumerate}
    \begin{enumerate} [label=($A4$),ref=$A4$] 
        \item \label{approx 4} $\displaystyle \lim_{j \to \infty} \mathcal{H}^{d-1}(\partial E_j \cap \partial \Omega) = 0$, 
    \end{enumerate}
    \begin{enumerate} [label=($A5$),ref=$A5$] 
        \item \label{approx 5} $\displaystyle \sup_{j \in \N} \norm{H_{\partial E_j}}_{C^0(\partial E_j)} \leq \norm{H_{\partial E}}_{C^0(\partial E)} +1 .$ 
    \end{enumerate} 
\end{lemma}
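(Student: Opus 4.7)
The plan is to realise the $E_j$ as smooth superlevel sets of a mollification of the signed distance to $\Sigma=\partial E$, choosing both the mollification parameter and the level so that $\partial E_j$ is $C^2$-close to $\Sigma$ while meeting $\partial\Omega$ in an $\mathcal{H}^{d-1}$-null set. Using \cref{d:regular distance}, I would first fix $\psi\in C^2(\R^d)$ with $\psi=\dist_\Sigma$ on $\Sigma_{2\delta}$ and $\psi\equiv\pm 1$ far from $\Sigma$, and set $\psi_\eta:=\psi*\rho_\eta\in C^\infty(\R^d)$ for a standard mollifier $\rho_\eta$, so that $\psi_\eta\to\psi$ in $C^2_{\mathrm{loc}}(\R^d)$. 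For $\eta$ small, $|\nabla\psi_\eta|\ge \tfrac{1}{2}$ on $\Sigma_{\delta}$, hence by the implicit function theorem any level set of $\psi_\eta$ at height close to $0$ is automatically a smooth compact hypersurface contained in $\Sigma_{\delta}$.

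The key step is the choice of the level $t_\eta\to 0$. I require $t_\eta$ to be a regular value of $\psi_\eta$ (automatic from the above gradient bound) and, more importantly, $\mathcal{H}^{d-1}(\{\psi_\eta=t_\eta\}\cap\partial\Omega)=0$. The second condition follows from the coarea formula applied to the Lipschitz function $\psi_\eta|_{\partial\Omega}$:
\begin{equation*}
    \int_{\R}\mathcal{H}^{d-2}\bigl(\{\psi_\eta=t\}\cap\partial\Omega\bigr)\,dt = \int_{\partial\Omega}|\nabla_{\partial\Omega}\psi_\eta|\,d\mathcal{H}^{d-1}<+\infty,
\end{equation*}
so that $\mathcal{H}^{d-2}(\{\psi_\eta=t\}\cap\partial\Omega)<+\infty$ and therefore $\mathcal{H}^{d-1}(\{\psi_\eta=t\}\cap\partial\Omega)=0$ for $\mathcal{L}^1$-a.e.\ $t$. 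A diagonal argument yields $\eta_j, t_j\to 0$ such that $E_j:=\{\psi_{\eta_j}>t_j\}$ satisfies \ref{approx 4} directly.

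For the remaining properties I would exploit the $C^2$-proximity of $\partial E_j$ to $\Sigma$. The implicit function theorem applied to $\psi_{\eta_j}(y+h\,N_\Sigma(y))=t_j$ produces a $C^2$ diffeomorphism $F_j\colon\Sigma\to\partial E_j$, $F_j(y)=y+h_j(y)N_\Sigma(y)$, with $h_j\to 0$ in $C^2(\Sigma)$. Property \ref{approx 1} then follows by dominated convergence and $\mathcal{L}^d(\Sigma)=0$. For \ref{approx 2}, \ref{approx 3} and \ref{approx 5} I rewrite
\begin{align*}
    \mathrm{Per}(E_j,\Omega) &= \int_{\Sigma}\mathds{1}_\Omega(F_j(y))\,J_{F_j}(y)\,d\mathcal{H}^{d-1}(y), \\
    \mathcal{W}(\partial E_j,\Omega) &= \int_{\Sigma}\mathds{1}_\Omega(F_j(y))\,H_{\partial E_j}(F_j(y))^2\,J_{F_j}(y)\,d\mathcal{H}^{d-1}(y),
\end{align*}
and apply dominated convergence, using that $J_{F_j}\to 1$ and $H_{\partial E_j}\circ F_j\to H_\Sigma$ uniformly on $\Sigma$ (which is exactly the content of \ref{approx 5}) and that $\mathds{1}_\Omega\circ F_j(y)\to\mathds{1}_\Omega(y)$ for every $y\in\Sigma\setminus\partial\Omega$.

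The main obstacle is precisely that this last pointwise convergence can fail on $\Sigma\cap\partial\Omega$, which would spoil the equalities in \ref{approx 2} and \ref{approx 3} whenever $\mathcal{H}^{d-1}(\Sigma\cap\partial\Omega)>0$. I would bypass this by prepending a small translation to the construction: the Fubini identity
\begin{equation*}
    \int_{B_r}\mathcal{H}^{d-1}\bigl((\Sigma+v)\cap\partial\Omega\bigr)\,dv = \int_{\Sigma}\mathcal{L}^d\bigl((\partial\Omega-y)\cap B_r\bigr)\,d\mathcal{H}^{d-1}(y) = 0
\end{equation*}
produces $v_j\to 0$ with $\mathcal{H}^{d-1}((\Sigma+v_j)\cap\partial\Omega)=0$; running the scheme above on $E+v_j$ in place of $E$ restores the $\mathcal{H}^{d-1}$-a.e.\ convergence $\mathds{1}_\Omega\circ F_j\to\mathds{1}_\Omega$, the translations vanish in the limit, and all five properties \ref{approx 1}--\ref{approx 5} follow.
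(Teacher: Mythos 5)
Your level-set construction is a sound alternative for part of the job: mollifying the smoothed signed distance, using the implicit function theorem to produce a $C^2$-small graph $F_j(y)=y+h_j(y)N_\Sigma(y)$, and applying the coarea formula on $\partial\Omega$ to pick a level $t_j$ with $\mathcal{H}^{d-1}(\{\psi_{\eta_j}=t_j\}\cap\partial\Omega)=0$ gives \eqref{approx 1}, \eqref{approx 4}, \eqref{approx 5} and the $C^2\to C^\infty$ upgrade cleanly. The gap lies in the last step: the translation trick cannot deliver \eqref{approx 2} and \eqref{approx 3}.

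Concretely, the inner (mollification) limit only yields $\mathrm{Per}(E_{j,k},\Omega)\to\mathrm{Per}(E+v_j,\Omega)$ and $\mathcal{W}(\partial E_{j,k},\Omega)\to\mathcal{W}(\partial E+v_j,\Omega)$, and the outer limit $\mathrm{Per}(E+v_j,\Omega)\to\mathrm{Per}(E,\Omega)$ simply need not hold, even though $\mathcal{H}^{d-1}((\Sigma+v_j)\cap\partial\Omega)=0$. Take $\Omega=B_1\subset\R^2$ and $E=B_{3/2}\setminus\overline{B_1}$, so $\mathrm{Per}(E,\Omega)=0$ and $\mathcal{H}^1(\Sigma\cap\partial\Omega)=2\pi$. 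For every nonzero $v$ with $|v|$ small the translated inner circle $\partial B_1+v$ meets $\partial B_1$ in exactly two points --- so your Fubini condition holds for \emph{all} $v\ne 0$, not merely a.e.~$v$ --- yet it crosses into the open disk along an arc whose length tends to $\pi$, whence $\mathrm{Per}(E+v,\Omega)\to\pi\ne 0$ as $v\to 0$, in any direction. A constant translation has no control over which side of $\partial\Omega$ the coincidence set $\Sigma\cap\partial\Omega$ lands after the perturbation, and that is exactly the quantity governing \eqref{approx 2} and \eqref{approx 3}. Lower semicontinuity gives the $\liminf$ for free, but the matching $\limsup$ fails; the extra mass $\mathcal{H}^{d-1}(\Sigma\cap\partial\Omega)$ is the obstruction.

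The paper handles this with a \emph{position-dependent} deformation rather than a rigid translation: it flows $E$ along a vector field $X$ with $\langle X,N_{\partial\Omega}\rangle\le -1/2$ on $\partial\Omega$, so that $g_t:=(\mathrm{Id}+tX)^{-1}$ maps $\Omega$ strictly into itself. This one-sided monotonicity $\Omega\subset g_t^{-1}(\Omega)$, combined with the pushforward identity $(g_t)_\#\mu_{E_t}=p_t\,\mu_E$ with $\norm{p_t}_{C^0}=1+O(t)$ and the expansion $H_{\partial E_t}=H_{\partial E}\circ g_t+O(t)$, produces the $\limsup$ inequality for both the perimeter and the Willmore energy, while Reshetnyak/Schätzle lower semicontinuity gives the $\liminf$. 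In the annulus example this simply pushes $\partial B_1$ radially outward, so the inner boundary leaves $\Omega$ and the perimeter inside $\Omega$ remains zero. If you wish to retain your mollification scheme for the $C^2\to C^\infty$ step, the fix is to first perform such a ``push $\Omega$ inward'' deformation (not a translation), and only then smooth by level sets --- which is essentially the order of operations in the paper's own proof.
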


\begin{proof}
To begin, we show that there exists a sequence of sets of class $C^2$ with the above properties. Since $\partial \Omega$ is of class $C^1$, then $N_{\partial \Omega} \in C(\partial \Omega; \mathbb{S}^{d-1})$, where $N_{\partial \Omega}$ denotes the inner unit normal to $\Omega$. Hence, we find a smooth vector field $X \in C^{\infty}_c(\R^d; \R^d)$ such that
    \begin{equation} \label{eq:approx-normal}
        \langle X(x) , N_{\partial \Omega}(x) \rangle \le - \frac{1}{2}, \qquad \forall x \in \partial \Omega. 
    \end{equation}
    For any $t >0$, we define $f_t(x):=x+tX(x)$. In particular, since $X$ has compact support, there exists $t_0 > 0$ such that $f_t$ is a diffeomorphism of class $C^\infty$ for any $t \in (0,t_0]$. We consider the set
    \[
        E_t:=f_t(E),
    \]
    and we claim that there exists a sequence of positive real numbers $\{ t_j \}_{j \in \mathbb{N}}$ converging to zero for which the corresponding sets $E_j:=E_{t_j}$ fulfill \eqref{approx 1}, \eqref{approx 2}, \eqref{approx 3}, \eqref{approx 4} and~\eqref{approx 5}.

    Letting $g_t := f_t^{-1}$, we prove that for $t$ small enough $g_t$ pushes $\Omega$ inside, that is there exists $t_1 \in (0,t_0]$ such that $g_t(\Omega) \subset \Omega$ for any $t \in (0,t_1]$. Suppose by contradiction that there exist a sequence of positive real numbers $\{ t_n \}_{n \in \mathbb{N}}$ converging to zero and a sequence of points $\{ y_n \}_{n \in \mathbb{N}} \subset \Omega$ such that $x_n:=g_{t_n}(y_n) \in \R^{d} \setminus \Omega$. Since $\Omega$ is bounded, we can assume that $y_n$ converges to a limit point $z_{\infty} \in \overline{\Omega}$. On the other hand, $g_t$ converges uniformly to the identity map as $t \to 0$, therefore $x_n$ converges to the same limit point $z_{\infty}$. Since $x_n \in \R^{d} \setminus \Omega$ for any $n \in \mathbb{N}$, we have that $z_{\infty} \in \partial \Omega$. Using a local chart near $z_{\infty}$, we reduce to the case $z_\infty = 0$ and $\Omega = \{ x \in \R^d \colon x_d < 0 \}$. Hence, it is clear that $(y_n - x_n)/t_n$ is pointing inside $\Omega$. On the other hand, by \eqref{eq:approx-normal} and the definition of $f_t$, we have
        \[
            \lim_{n \to \infty} \Big\langle \frac{y_n-x_n}{t_n}, N_{\partial \Omega}(z_{\infty}) \Big\rangle
            =
            \lim_{n \to \infty}
            \Big\langle \frac{f_{t_n}(x_n)-x_n}{t_n}, N_{\partial \Omega}(z_{\infty}) \Big\rangle
            =
            \langle X(z_{\infty}), N_{\partial \Omega}(z_{\infty}) \rangle
            \le - \frac{1}{2},
        \]
    which is a contradiction because $N_{\partial \Omega}(z_{\infty})$ is pointing inside $\Omega$.

\textsc{Proof of \eqref{approx 1}.} For any $t \in (0,t_0]$, we have that $\partial E_t = f_t(\partial E) \in C^2$ since $f_t$ is a smooth diffeomorphism and $\partial E \in C^2$. Moreover, for any $x \in \R^d \setminus \partial E$, it holds that $\mathds{1}_{E_t}(x) \to \mathds{1}_{E}(x)$ as $t\to 0$. Indeed, for any $x \in \R^d, t \in (0,t_0]$ we have that 
    \begin{equation}
        \mathds{1}_{f_t(E)}(x) = \mathds{1}_{f_t(E)} (f_t(g_t(x)) = \mathds{1}_{E}(g_t(x)). 
    \end{equation}
    Since $g_t$ converges to the identity map uniformly as $t \to 0$ and $E$ is open, for any $x \in E$ then $g_t(x) \in E$ we have that $g_t(x) \in E$ for $t$ small enough. The same argument works for $x \in \R^d \setminus \overline{E}$, thus proving \eqref{approx 1}. 
    
    \textsc{Proof of \eqref{approx 2}.} Since the Perimeter is lower semicontinuous with respect to the $L^1$-convergence of sets (see e.g.~\cite{M12}*{Proposition~12.15}), we have
        \begin{equation}
            \label{eq:liminf-per}
            \liminf_{t \to 0} \mathrm{Per}(E_t, \Omega) \ge \mathrm{Per}(E, \Omega).
        \end{equation}
    We prove the opposite inequality. We set for convenience $\mu_{E_t}:=\mathcal{H}^{d-1} \llcorner \partial E_t$. It is known (see e.g~\cite{M12}*{(17.6), (17.29), (17.30)}) that
        \begin{equation}
            \label{eq:pushforward-1}
            (g_t)_{\#} \mu_{E_t} = p_t \mu_E, \quad \text{with} \quad \norm{p_t}_{C^0(\partial E)} = 1 + O(t).
        \end{equation} 
    Combining this fact with the property that $\Omega \subset g_t^{-1}(\Omega)$ we obtain
        \begin{equation}
            \label{eq:limsup-per-1}
            \mu_{E_t} (\Omega)
            \le
            \mu_{E_t} (g_t^{-1}(\Omega))
            =
            (g_t)_{\#}  \mu_{E_t} (\Omega)
            =
            \mu_{E} (\Omega) + O(t).
        \end{equation}
    Then, \eqref{approx 2} follows by taking the limsup as $t$ goes to zero in the previous inequality. 

    \textsc{Proof of \eqref{approx 3} and \eqref{approx 5}.} It was proved by Schätzle (see~\cite{Sch09}) that the Willmore functional is lower semicontinuous with respect to the $L^1$-convergence of $C^2$ sets, that is 
        \begin{equation}
            \label{eq:liminf-Will}
            \liminf_{t \to 0} \mathcal{W}(\partial E_t, \Omega) \ge \mathcal{W}(\partial E, \Omega).
        \end{equation}
    In our case, there are simpler and more direct ways to show the lower semicontinuity property above. For example, taking into account~\eqref{approx 2}, then~\eqref{eq:liminf-Will} follows by an application of Reshetnyak’s continuity theorem, see~\cite{AM03}*{Remark~2} and also~\cite{LM89}*{Lemma~2} for a particular case.
    At this point, we claim that
        \begin{equation} \label{eq:H-exp}
            H_{\partial E_t}(y) = H_{\partial E}(g_t(y)) + O(t), \qquad \forall y \in \partial E_t,
        \end{equation}
    where the reminder term is uniform with respect to $y \in \partial E_t$. Thus, the sets $\{E_t\}_{t \in (0, t_1] }$ satisfy \eqref{approx 5} provided that $t_1$ is small enough. By \eqref{eq:pushforward-1},~\eqref{eq:H-exp} and since $\Omega \subset g_t^{-1}(\Omega)$, we have
        \begin{equation}
            \mathcal{W}(\partial E_t, \Omega) \le \mathcal{W}(\partial E_t , \Omega) + O(t)
        \end{equation}
    and the conclusion follows taking the limsup as $t$ goes to zero in the inequality above. To check \eqref{eq:H-exp}, let $\psi \in C^2(\R^d)$ such that $E = \{ x \colon \psi(x) > 0 \}$ and $\nabla \psi (x) \neq 0$ for any $x \in \partial E$. It is well known that
        \begin{gather} 
            \label{eq:normal-psi}
            N_{\partial E} = \frac{\nabla \psi}{\abs{\nabla \psi}} \quad \text{on $\{\psi = 0\}$}, \\[0.5ex]
            \label{eq:curv}
            H_{\partial E}
            = 
            - \mathrm{div} \bigg( \frac{\nabla \psi}{\abs{\nabla \psi}} \bigg) = - \frac{\Delta \psi}{\abs{\nabla \psi}} + \frac{\nabla^2 \psi[\nabla \psi, \nabla \psi]}{\abs{\nabla \psi}^3} \quad \text{on $\{\psi = 0\}$},
        \end{gather}
    where $N_{\partial E}$ denotes the inner unit normal to $E$. In particular, the right-hand sides in the previous identities do not depend on the particular choice of $\psi$. For any $t \in (0,t_1]$, we define $\psi_t \in C^2(\R^d)$ by $\psi_t := \psi \circ g_t$. It is clear that $E_t = \{ y \colon \psi_t(y) > 0 \}$ and that $\nabla \psi_t (y) \neq 0$ for any $y \in \partial E_t$. We set $G_t(y) := \nabla g_t(y)$ and we denote by $G_t^*(y)$ its transpose matrix. A direct computation shows
        \begin{gather} \label{eq:diff-1}
            \nabla \psi_t(y) = G_t^*(y) \nabla \psi (g_t(y)), \\[0.5ex]
            \label{eq:diff-2}
            \nabla^2 \psi_t(g_t(y)) = G_t^*(y) \nabla^2 \psi (g_t(y)) G_t(y) + \langle \nabla \psi (g_t(y)), \nabla^2 g_t (y) \rangle,
        \end{gather}
    where $\nabla^2 g_t$ is the vector valued Hessian of $g_t$. Combining~\eqref{eq:curv},~\eqref{eq:diff-1},~\eqref{eq:diff-2} and the fact that $\norm{g_t - \mathrm{Id}}_{C^2(\R^d)} \to 0$ as $t \to 0$ we deduce~\eqref{eq:H-exp}.
    
    \textsc{Proof of \eqref{approx 4}.} For any $t_2 \in (0,t_1]$, we define the set
        \begin{equation}
            \label{eq:N-set}
            \mathcal{N}(t_2):=\{ t \in (0,t_2] \colon \mathcal{H}^{d-1}(\partial E_t \cap \partial \Omega) > 0 \},
        \end{equation}
    and we claim that there exists $t_2 \in (0,t_1]$ such that $\mathcal{N}(t_2)$ is at most countable. If this is the case, then we find a sequence of positive real numbers $\{ t_j \}_{j \in \mathbb{N}}$ converging to zero for which the corresponding sets satisfy a stronger property than \eqref{approx 4}, namely $\mathcal{H}^{d-1}(\partial E_{t_j} \cap \partial \Omega) = 0$. 

    We write $\partial E_t \cap \partial \Omega = A_t \cup B_t$, where
        \begin{align}
            A_t & := \{ z \in \partial E_t \cap \partial \Omega \colon T_z \partial E_t = T_z \partial \Omega \} = \{ z \in \partial E_t \cap \partial \Omega \colon \abs{\langle N_{\partial E_t}(z) , N_{\partial \Omega}(z) \rangle} = 1 \}, \\[0.5ex]
            B_t & := \{ z \in \partial E_t \cap \partial \Omega \colon T_z \partial E_t \neq T_z \partial \Omega \} =
            \{ z \in \partial E_t \cap \partial \Omega \colon \abs{\langle N_{\partial E_t}(z) , N_{\partial \Omega}(z) \rangle} < 1 \},
        \end{align}
    where $N_{\partial E_t}$ denotes the inner unit normal to $E_t$. It is well known (see e.g.~\cite{Hirsch94}*{Section~1, Theorem~3.3}) that the transverse intersection of two submanifolds of codimensions $k_1$ and $k_2$ is either empty or a submanifold of codimension $k_1+k_2$. Therefore, for any $t \in (0,t_1]$, $B_t$ is either empty or a $(d-2)$-dimensional submanifold of $\R^d$. In both cases, we have that $\mathcal{H}^{d-1}(B_t)=0$. We prove that there exists $t_2 \in (0,t_1]$ such that for any $t,s \in (0, t_2]$ with $t \neq s$ we have $A_t \cap A_s = \emptyset$. Suppose by contradiction that there exist two sequences of positive real numbers $0 < s_n < t_n$, with $t_n \to 0$ as $n \to \infty$, and two sequences of points $\{ x_n \}, \{ y_n \} \subset \partial E$ such that $f_{t_n}(x_n) = f_{s_n}(y_n) \in \partial \Omega$. Since $\partial E$ is compact, up to subsequences, we may assume that $x_n \to z_{\infty} \in \partial E$. Since $x_n = f_{t_n}(x_n) - t_n X(x_n)$, it follows that $z_n:=f_{t_n}(x_n) \to z_{\infty}$ and we infer that $z_{\infty} \in \partial \Omega$. Moreover, we have
        \begin{equation}
            \abs{x_n - y_n} = \abs{t_n X(x_n) - s_n X(y_n)} \le 2 t_n \max \{ \abs{X(x)} \colon x \in \partial E \},
        \end{equation}
    therefore $y_n \to z_\infty$. We claim that $z_{\infty} \in A_0$. Since $z_n=f_{t_n}(x_n) \in A_{t_n}$ for any $n \in \mathbb{N}$, we have
        \begin{equation} \label{eq:A0-closure}
            \abs{
            \langle N_{\partial E_t}(z_n), N_{\partial \Omega}(z_n) \rangle} = 1.
        \end{equation}
    On the other hand, from~\eqref{eq:normal-psi} and~\eqref{eq:diff-1} we have
        \begin{equation}
            N_{\partial E_t}(z)
            =
            \frac{G_t^*(z)N_{\partial E}(g_t(z))}{\abs{ G_t^*(z)N_{\partial E}(g_t(z))}}, \qquad \forall z \in \partial E_t.
        \end{equation}
    Therefore, we have $N_{\partial E_t}(z_n) \to N_{\partial E}(z_{\infty})$ as $n \to \infty$, since $z_n \to z_{\infty}$ and $g_t$ converges in $C^1$ to the identity map as $t \to 0$. Then, by \eqref{eq:A0-closure} we derive
        \begin{equation}
            \abs{
            \langle N_{\partial E}(z_{\infty}), N_{\partial \Omega}(z_{\infty}) \rangle}
            =
            \lim_{n \to \infty}
            \abs{
            \langle N_{\partial E_t}(z_n), N_{\partial \Omega}(z_n) \rangle}
            =
            1.
        \end{equation}
    This proves that $z_{\infty} \in A_0 = \{ z \in \partial E \cap \partial \Omega \colon \abs{\langle N_{\partial E}(z), N_{\partial \Omega}(z) \rangle} = 1 \}$. 

    At this point, we claim that the map $F \colon \partial E \times \R \to \R^d$ defined as $F(x,t):=f_t(x)$ is a local diffeomorphism around $(z_{\infty},0)$. If this is the case, then we find a contradiction since, for $n$ large, $F(x_n, t_n)=F(y_n,s_n)$ implies that $(x_n, t_n)=(y_n,s_n)$, but $s_n < t_n$. To prove the claim we have to check that the differential of $F$ at the point $(z_{\infty},0)$ is surjective. It is not difficult to see that the image of the differential at $(z_\infty, 0)$ is given by
        \begin{equation}
            V:=T_{z_{\infty}} \partial E \oplus \mathrm{Span} (X(z_{\infty})).
        \end{equation}
    Now, $V=\R^d$ if and only if $\langle X(z_{\infty}) , N_{\partial E}(z_{\infty})  \rangle \neq 0$. Since $z_{\infty} \in A_0$ we have $\abs{\langle X(z_{\infty}) , N_{\partial E}(z_{\infty})  \rangle} = \abs{\langle X(z_{\infty}) , N_{\partial \Omega}(z_{\infty})  \rangle}$ and the latter is different from zero because of~\eqref{eq:approx-normal}.

    \textsc{Building smooth sets.} To summarize, we have constructed a sequence $\{ E_j \}_{j \in \mathbb{N}}$ of bounded open sets of class $C^2$ satisfying \eqref{approx 1}, \eqref{approx 2}, \eqref{approx 3}, \eqref{approx 5} and the additional property $\mathcal{H}^{d-1}(\partial E_j \cap \partial \Omega) = 0$ for any $j \in \N$. To conclude, we want to pass from $C^2$ to $C^\infty$ sets. For any $j \in \mathbb{N}$, there exists a sequence of smooth bounded open sets $E_{k,j}$ converging to $E_j$ in $L^1(\R^d)$, as $k \to \infty$, and such that
        \begin{equation}        \label{eq:approx-energy}
            \lim_{k \to \infty} \mathrm{Per}(E_{k,j}, \R^d) = \mathrm{Per}(E_j, \R^d)
            \quad \text{and} \quad  
            \lim_{k \to \infty} \mathcal{W}(\partial E_{k,j}, \R^d) = \mathcal{W}(\partial E_j, \R^d).
        \end{equation}
    Moreover, the sequence $\{E_{j,k}\}_{k\in \N}$ can be chosen such that \eqref{approx 5} is satisfied, with $H_{\partial E_j}$ at the right-hand side. We refer e.g.~to~\cite{Antonini} for a rigorous proof of this fact and more general results about the approximation by smooth sets on the whole Euclidean space. By $\mathcal{H}^{d-1}(\partial E_j \cap \partial \Omega) = 0$, \eqref{eq:approx-energy} and the lower semicontinuity of the Perimeter and the Willmore functional on $\Omega$ and on $\R^d \setminus \overline{\Omega}$, we infer 
         \begin{equation}        \label{eq:approx-energy-2}
            \lim_{k \to \infty} \mathrm{Per}(E_{k,j}, \Omega) = \mathrm{Per}(E_j, \Omega)
            \quad \text{and} \quad  
            \lim_{k \to \infty} \mathcal{W}(\partial E_{k,j}, \Omega) = \mathcal{W}(\partial E_j, \Omega).
        \end{equation}
    Moreover, by the upper semicontinuity of the evaluation on closed sets with respect to the weak convergence of measures we have
        \begin{equation} \label{eq:approx-energy-3}
            \limsup_{k \to \infty} \mathcal{H}^{d-1}(\partial E_{k,j} \cap \partial \Omega) \le \mathcal{H}^{d-1}(\partial E_j \cap \partial \Omega) = 0.
        \end{equation}
    The conclusion follows from~\eqref{eq:approx-energy-2},~\eqref{eq:approx-energy-3} and a diagonal argument.
\end{proof}

\section{On the decay of optimal profile} \label{s: decay optimal profile}

In this section, we discuss the proof of \cref{thm:hi-decay}. Since $w'$ solves the fractional Allen--Cahn equation, then for any $\lambda > 0$ we have that 
\begin{equation}
    ((-\Delta)^s + \lambda) w' = (\lambda - W''(w)) w', 
\end{equation}
The proof of \cref{thm:hi-decay} relies on the decay properties of the fundamental solution $G_{s,\lambda}$ of the operator $\lambda + (-\Delta)^s$, whose symbol is $\lambda +  \abs{ 2\pi \xi}^{2s}$. Since $G_{s,\lambda}$ is formally defined by 
\begin{equation}
    G_{s,\lambda}(x) = \mathscr{F}_1^{-1} \left( \left(\lambda +  \abs{2 \pi \xi}^{2s} \right)^{-1}\right)(x) = \int_{\R} \frac{e^{2\pi ix \xi}}{ \lambda + \abs{2\pi \xi}^{2s}  } \, d\xi, 
\end{equation}
and for any $\xi \in \R$ we have
\begin{equation}
    \frac{1}{ \lambda + \abs{2\pi \xi}^{2s}  } = \int_0^{+\infty} e^{-\lambda t} \exp \left(- t \abs{2 \pi \xi}^{2s} \right) \, dt,
\end{equation}
then $G_{s,\lambda}$ formally satisfies 
\begin{equation} \label{eq: heuristic G_s,lambda}
   G_{s,\lambda}(x) =  \int_{\R} e^{2\pi i x \xi} \int_0^{+\infty} e^{-\lambda t}\exp \left(- t \abs{2\pi \xi}^{2s}\right) \, dt \, d \xi = \int_{0}^{+\infty} e^{-\lambda t} P^{(s)}_1 (t, x) \, dt. 
\end{equation} 
The above computation can be made rigorous. 

\begin{proposition} \label{p: decay of fundamental solution}
Fix $s \in (0,1)$ and $\lambda >0$. For any $x \in \R$, set 
\begin{equation} \label{eq: fundamental solution}
    G_{s,\lambda}(x) : = \int_{0}^{+\infty} e^{-\lambda t} P^{(s)}_1(t,x) \, dt. 
\end{equation} 
Then, $G_{s,\lambda} \in L^1(\R)$ and it holds that 
\begin{equation}
    \mathscr{F}_1(G_{s,\lambda})(\xi) = \frac{1}{ \lambda + \abs{2\pi \xi}^{2s} } \qquad \forall \xi \in \R. \label{eq: fourier of fundamental solution} 
\end{equation} 
Moreover, $G_{s,\lambda} \in C^{\infty}(\R \setminus \{0\})$ and for any $k \geq 0 $ it holds that 
\begin{equation} \label{eq: decay of fund sol}
    \abs{\partial_x^k G_{s,\lambda} (x)} \leq C(s,\lambda, k) \abs{x}^{-k-1-2s} \qquad \forall x\neq 0. 
\end{equation}
\end{proposition}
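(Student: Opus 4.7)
The plan is to establish the three assertions in sequence, all of them by combining Fubini with the scaling law \eqref{eq: scaling of heat kernel} and the derivative bounds of \cref{p: decay of P^s} and \cref{p: decay of derivative of P^s}.

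First I would show $G_{s,\lambda}\in L^1(\R)$ and identify its Fourier transform simultaneously. Since $P^{(s)}_1(t,\cdot)\ge 0$ is a probability density (this is standard from $\mathscr{F}_1 P^{(s)}_1(t,\cdot)(0)=1$ together with positivity from \cref{p: decay of P^s}), Tonelli yields
\begin{equation}
\int_{\R}G_{s,\lambda}(x)\,dx=\int_{0}^{+\infty}e^{-\lambda t}\int_{\R}P^{(s)}_1(t,x)\,dx\,dt=\frac{1}{\lambda}.
\end{equation}
Hence $G_{s,\lambda}\in L^1$. The Fourier formula \eqref{eq: fourier of fundamental solution} follows from Fubini applied to $\int_\R G_{s,\lambda}(x)e^{2\pi i x\xi}\,dx$, using that $\mathscr{F}_1(P^{(s)}_1(t,\cdot))(\xi)=\exp(-t\abs{2\pi\xi}^{2s})$ (which is an immediate consequence of \cref{d: fractional heat kernel} and the fact that $P^{(s)}_1(t,\cdot)$ is even), followed by the explicit computation $\int_0^{+\infty}e^{-\lambda t}e^{-t\abs{2\pi\xi}^{2s}}\,dt=(\lambda+\abs{2\pi\xi}^{2s})^{-1}$.

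Next I would derive the key two-regime pointwise estimate on the derivatives of the heat kernel. The scaling identity \eqref{eq: scaling of heat kernel} differentiated $k$ times gives
\begin{equation}
\partial_x^k P^{(s)}_1(t,x)=t^{-\frac{k+1}{2s}}(\partial^k P^{(s)}_1)(1,\,xt^{-\frac{1}{2s}}),
\end{equation}
so combining with \eqref{eq: decay of n-th derivative of P^s} one gets
\begin{equation}
\abs{\partial_x^k P^{(s)}_1(t,x)}\le C(k,s)\,\min\!\left(t^{-\frac{k+1}{2s}},\,\frac{t}{\abs{x}^{k+1+2s}}\right),\qquad \forall (t,x)\in(0,\infty)\times\R\setminus\{0\},
\end{equation}
the minimum being controlled by the first entry when $t\le \abs{x}^{2s}$ and by the second otherwise (or vice versa, depending on $x$; the threshold is $t=\abs{x}^{2s}$). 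In particular, for every fixed $x\neq 0$ the bound $e^{-\lambda t}\min(t^{-(k+1)/(2s)},t/\abs{x}^{k+1+2s})$ is locally uniformly integrable in $t$, which justifies differentiating \eqref{eq: fundamental solution} under the integral sign on any neighbourhood bounded away from the origin; iterating this gives $G_{s,\lambda}\in C^{\infty}(\R\setminus\{0\})$.

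Finally, for the decay estimate \eqref{eq: decay of fund sol}, I would split the representation
\begin{equation}
\partial_x^k G_{s,\lambda}(x)=\int_{0}^{\abs{x}^{2s}}e^{-\lambda t}\partial_x^k P^{(s)}_1(t,x)\,dt+\int_{\abs{x}^{2s}}^{+\infty}e^{-\lambda t}\partial_x^k P^{(s)}_1(t,x)\,dt.
\end{equation}
On $(0,\abs{x}^{2s}]$, the second entry of the min dominates, giving
\begin{equation}
\int_{0}^{\abs{x}^{2s}}e^{-\lambda t}\frac{Ct}{\abs{x}^{k+1+2s}}\,dt\le \frac{C}{\abs{x}^{k+1+2s}}\int_{0}^{+\infty}e^{-\lambda t}t\,dt=\frac{C(\lambda)}{\abs{x}^{k+1+2s}}.
\end{equation}
On $[\abs{x}^{2s},+\infty)$, one uses the first entry: for $\abs{x}\ge 1$ the factor $e^{-\lambda t}$ gives $e^{-\lambda\abs{x}^{2s}/2}$, which decays faster than any polynomial and is therefore $\le C\abs{x}^{-k-1-2s}$; for $\abs{x}\le 1$ one performs the change $\tau=t\abs{x}^{-2s}$ to obtain a bound proportional to $\abs{x}^{2s\cdot(1-(k+1)/(2s))}=\abs{x}^{2s-k-1}\le \abs{x}^{-k-1-2s}$. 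Combining the two contributions yields \eqref{eq: decay of fund sol}.

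The main obstacle is a bookkeeping one: producing the correct exponents in the min-type bound on $\partial_x^k P^{(s)}_1$ and verifying that the resulting split integral gives the claimed decay in both the large-$\abs{x}$ regime (where it is tight) and the small-$\abs{x}$ regime (where the exponentially small tail still matches $\abs{x}^{-k-1-2s}$). No delicate analytic input beyond the scaling law and \cref{p: decay of derivative of P^s} is needed.
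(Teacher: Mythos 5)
Your strategy -- Fubini/Tonelli together with the scaling law \eqref{eq: scaling of heat kernel} and the pointwise decay of $P_1^{(s)}(1,\cdot)$ and its derivatives from \cref{p: decay of P^s} and \cref{p: decay of derivative of P^s} -- is the same one the paper uses, and the conclusions you reach are correct. The difference is one of implementation. You build the two-sided bound $\abs{\partial_x^k P_1^{(s)}(t,x)}\lesssim \min\bigl(t^{-(k+1)/(2s)},\, t\abs{x}^{-k-1-2s}\bigr)$ and then split the $t$-integral at $t=\abs{x}^{2s}$, treating the two pieces separately and further distinguishing $\abs{x}\ge 1$ from $\abs{x}\le 1$. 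The paper instead uses only the crude one-sided bound $(1+\abs{y}^{k+1+2s})^{-1}\le \abs{y}^{-k-1-2s}$ valid for every $y\ne 0$, which after scaling gives $\abs{\partial_x^k P^{(s)}_1(t,x)}\lesssim t\,\abs{x}^{-k-1-2s}$ uniformly in $t>0$ and $x\ne 0$; integrating this against $e^{-\lambda t}$ produces $\abs{x}^{-k-1-2s}$ directly, with no split and no case analysis on $\abs{x}$. Both routes succeed, but the paper's is shorter and uniform.

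One step of your small-$\abs{x}$ analysis needs tightening. On $[\abs{x}^{2s},\infty)$ with $\abs{x}\le 1$, the substitution $\tau=t\abs{x}^{-2s}$ yields
\begin{equation}
\abs{x}^{2s-k-1}\int_1^\infty e^{-\lambda\abs{x}^{2s}\tau}\,\tau^{-(k+1)/(2s)}\,d\tau,
\end{equation}
and you claim this is proportional to $\abs{x}^{2s-k-1}$. But when $(k+1)/(2s)\le 1$ -- that is, when $k=0$ and $s\ge \sfrac{1}{2}$ -- the remaining $\tau$-integral diverges as $\abs{x}\to 0$, so no $\abs{x}$-independent constant can be pulled out. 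In that regime the product is actually $O(1)$ (or $O(\log(1/\abs{x}))$ at $s=\sfrac{1}{2}$), which is still $\le \abs{x}^{-k-1-2s}$ for $\abs{x}\le 1$, so the final estimate survives; but your derivation as written hides this and should either be supplemented with a brief separate argument for $k=0$, $s\ge\sfrac{1}{2}$, or, more simply, be replaced by the global bound $t\abs{x}^{-k-1-2s}$ as the paper does, which removes the issue entirely.
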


\begin{proof}
To begin, since $P^{(s)}_1(t, x) >0$ for any $(t,x) \in (0, +\infty) \times \R$, we notice that $G_{s,\lambda}(x)$ is always well defined with values in $[0, +\infty]$. Then, by \eqref{eq: scaling of heat kernel} and Fubini's theorem, it results that 
\begin{align}
    \int_{\R} G_{s,\lambda}(x) & = \int_0^{+\infty} \int_{\R} e^{-\lambda t} t^{-\frac{1}{2s}} P^{(s)}_1\left(1, t^{-\frac{1}{2s}}x\right)\, dx \, dt = \int_0^{+\infty} e^{-\lambda t} \int_{\R} P^{(s)}_{1} (1, y)\, dy \, dt < +\infty. 
\end{align} 
From now on, we neglect constants $C(k,s,\lambda)>0$. Moreover, by \cref{p: decay of P^s}, we have that 
\begin{equation}
    G_{s,\lambda}(x) \lesssim \int_{0}^{+\infty} e^{-\lambda t} t^{-\frac{1}{2s}} \frac{1}{1+ \abs{x}^{2s+1} t^{-\frac{2s+1}{2s}} }\, dt \lesssim \abs{x}^{-1-2s}\int_{0}^{+\infty} e^{-\lambda t} t\, dt \lesssim \abs{x}^{-1-2s}, 
\end{equation}
thus proving \eqref{eq: decay of fund sol} for $k = 0$. To compute the Fourier transform, by Fubini's theorem and the inversion formula in $L^2$, we have that  
\begin{align}
    \mathscr{F}_1( G_{s,\lambda})(\xi) & = \int_0^{+\infty} e^{-\lambda t} \mathscr{F}_1 \left(P_1^{(s)}(t, \cdot)\right)(\xi) \, dt 
    \\ & = \int_{0}^{+\infty} e^{-\lambda t} \exp\left(-   \abs{2\pi \xi}^{2s} t \right) \, dt  = \frac{1}{\lambda + \abs{2 \pi \xi}^{2s} },  
\end{align} 
thus proving \eqref{eq: fourier of fundamental solution}. To prove that $G_{s,\lambda}$ is smooth away from the origin, we check that 
\begin{equation}
    \int_0^{+\infty} e^{-\lambda t} \abs{\partial_x^k P^{(s)}_1(t,x)} \, dt \lesssim \abs{x}^{-k-1-2s} \qquad \forall k\in \N \qquad \forall x \neq 0.  \label{eq: smoothness of fund sol}
\end{equation}
Indeed, by \eqref{eq: scaling of heat kernel} and \cref{p: decay of derivative of P^s} we have  
\begin{align}
    \int_0^{+\infty} e^{-\lambda t} \abs{\partial_x^k P^{(s)}_1(t,x)} \, dt & = \int_0^{+\infty} e^{-\lambda t} t^{-\frac{k+1}{2s}} \abs{ \partial_x^k P^{(s)}_1(1, t^{-\frac{1}{2s}} x) } \, dt 
    \\ & \lesssim \int_0^{+\infty} e^{-\lambda t} t^{-\frac{k+1}{2s}} \frac{1}{1+ t^{-\frac{k+1+2s}{2s}} \abs{x}^{-k-1-2s}} \, dt 
    \\ & \lesssim \abs{x}^{-k-1-2s} \int_0^{+\infty} e^{-\lambda t} t \, dt, 
\end{align}
thus proving \eqref{eq: smoothness of fund sol}. Then, it is easy to check that $G_{s,\lambda}$ is smooth away from the origin and it holds 
\begin{equation}
    \partial^k_x G_{s,\lambda}(x) = \int_0^{+\infty} e^{-\lambda t} \partial_x^k P^{(1)}_s(t,x) \, dt. 
\end{equation}
Thus, \eqref{eq: decay of fund sol} follows by \eqref{eq: smoothness of fund sol}. 
\end{proof}

The proof of \cref{thm:hi-decay} follows from \cref{p: decay of fundamental solution}. 

\begin{proof}[Proof of \cref{thm:hi-decay}] 
We perform the proof by induction. We start with the case $k=2$. We neglect constants $C(k,s, W)>0$. Since $w'$ solves \eqref{eq: fractional Allen--Cahn bis}, iterating the estimates in \cite{S07}*{Proposition 2.8-2.9}, it is readily checked that $w \in W^{2, \infty}(\R)$ with $\norm{w}_{W^{2,\infty}(\R)} \lesssim 1$. Letting $\lambda = W''(\pm 1)$, since $w' \in L^\infty(\R)$ solves
\begin{equation}
    (\lambda + (-\Delta)^s) w' = (\lambda - W''(w)) w',  \label{eq: fractional Allen--Cahn bis} 
\end{equation}
then by \eqref{eq: fourier of fundamental solution} we have 
\begin{equation} \label{eq: integral formula for w'}
    w'(x) = \int_{\R} G_{s,\lambda}(x-z) \Psi(z) w'(z) \, dz,
\end{equation}
where $G_{s,\lambda}$ is given by \eqref{eq: fundamental solution} and we set $\Psi(z) = \lambda - W''(w(z))$. Since $G_{s,\lambda}$ is smooth away from $0$ (see \cref{p: decay of fundamental solution}), for any $x > 1$ we have that 
\begin{align}
w''(x) & = \int_{\abs{x-z} \geq \frac{x}{2}} G_{s,\lambda} (x-z) \partial_z [ \Psi(z) w'(z) ] \, dz + \int_{\abs{x-z} < \frac{x}{2}} G_{s,\lambda} (x-z) \partial_z [ \Psi(z) w'(z) ] \, dz 
\\ & = \int_{|x-z|\geq \frac{x}{2}} \partial_x G_{s,\lambda} (x-z) \Psi(z) w'(z)\,dz -\Bigg[ G_{s,\lambda}(x-z)\Psi(z) w'(z) \Bigg]_{\frac{x}{2}}^{\frac{3x}{2}}
\\ & \qquad +\int_{\frac{x}{2}}^{\frac{3x}{2}} G_{s,\lambda} (x-z) \left[ -W'''(w(z))(w'(z))^2 + (\lambda-W''(w(z)))w''(z) \right] \, dz.
\end{align}
By the decay properties of $G_{s,\lambda}, \partial_x G_{s,\lambda}$ (see \cref{p: decay of fundamental solution})
and $w \in W^{2,\infty}(\R)$, we infer that 
\begin{equation}
    \abs{w''(x)} \lesssim \abs{x}^{-2-2s} + \abs{x}^{-2s} \norm{w''}_{L^\infty([\sfrac{x}{2},\sfrac{3x}{2}])}. 
\end{equation}
Then, letting $h > 1+ \sfrac{1}{s}$ be an integer and iterating $h$ times the above estimate, for $x> 2^{h+1}$ we find that 
\begin{equation}
\abs{w''(x)} \lesssim \abs{x}^{-2-2s} + \abs{x}^{-2s h} \norm{w''}_{L^\infty([2^{-h}x,2^{h}x])} \lesssim \abs{x}^{-2-2s} \left(1 + \norm{w''}_{L^\infty([1, +\infty))}\right),  
\end{equation}
thus proving \eqref{eq: decay of w''} for $x > 2^{h+1}$. The estimate for $x \leq -2^{h+1}$ is analogous, since $w''$ is odd, and the case $x \in [-2^{h+1},2^{h+1}]$ is trivial, since $w'' \in L^\infty(\R)$. Then, \eqref{eq: decay of w''} is proved for $k=2$. 

Fix $k \geq 2$ and assume that $W \in C^{k+2}(\R)$, $w \in C^{k}(\R)$ and \eqref{eq: decay of w''} is proved for any derivative of order smaller than or equal to $k$. We prove that $w\in C^{k+1}(\R)$ and \eqref{eq: decay of w''} holds for $\partial_x^{k+1} w$. Differentiating $k$ times \eqref{eq:f-one-dim-profile}, we find that 
\begin{equation}
    (-\Delta)^s \partial_x^{k} w = -\partial_x^{k} W'(w), 
\end{equation}
where the right-hand side satisfies $\norm{\partial_x^{k} W'(w) }_{L^\infty(\R)} \lesssim 1$. Hence, iterating \cite{S07}*{Proposition 2.8-2.9}, it is readily checked that $\partial_x^k w \in C^{1}(\R)$ and it holds $\norm{\partial_x^{k+1} w}_{L^\infty(\R)} \lesssim 1$. Then, since $G_{s,\lambda}$ is smooth away from the origin (see \cref{p: decay of fundamental solution}), by \eqref{eq: integral formula for w'} and integrating by parts $k$ times, for $x>1$ we find that 
\begin{align}
    \partial_x^{k+1} w(x) & = \int_{\abs{x-z} \geq \frac{x}{2}} G_{s,\lambda}(x-z) \partial_z^k[\Psi(z) w'(z)] \, dz + \int_{\abs{x-z} < \frac{x}{2}} G_{s,\lambda}(x-z) \partial_z^k[\Psi(z) w'(z)] \, dz
    \\ & = \int_{\abs{x-z} \geq \frac{x}{2}} \partial_x^k G_s(x-z) \Psi(z) w'(z)\, dz - \sum_{i=0}^{k-1} \bigg[ \partial_x^{i} G_{s,\lambda}(x-z) \partial^{k-1-i}_z[\Psi(z) w'(z)] \bigg]_{z=\frac{x}{2}}^{z= \frac{3x}{2}} 
    \\ & \qquad + \int_{\abs{x-z} < \frac{x}{2}} G_{s,\lambda}(x-z) \partial_z^k[\Psi(z) w'(z)] \, dz = A+ \sum_{i=0}^{k-1} B_i + C. 
\end{align} 
We estimate separately each term. To begin, by \cref{p: decay of fundamental solution}, we have that 
\begin{equation}
    \abs{A} \lesssim \abs{x}^{-k-1-2s} \norm{\Psi w'}_{L^1(\R)} \lesssim \abs{x}^{-k-1-2s}. 
\end{equation}
For any $j = 0, \dots, k$, by the chain rule and since \eqref{eq: decay of w''} holds up to the order $k$, it is easy to estimate 
\begin{equation} \label{eq: decay of derivative of Psi}
    \abs{\partial_z^j \Psi(z)} \lesssim \abs{z}^{-j-2s}. 
\end{equation}
Then, fix an index $i = 0, \dots, k-1$. By Leibniz rule, we have that 
\begin{align}
    \abs{\partial^i_z [\Psi(z) w'(z)]} & \leq \sum_{j=0}^i \binom{i}{j} \abs{\partial_z^j\Psi(z)} \abs{\partial_z^{i+1-j} w(z)} \lesssim \abs{z}^{-i-1-4s}. \label{eq: decay of Psi w'}
\end{align}
Therefore, by \cref{p: decay of fundamental solution} and \eqref{eq: decay of Psi w'}, we infer that 
\begin{equation}
    \sum_{i=0}^{k-1} \abs{B_i} \lesssim \abs{x}^{-k-1-6s}. 
\end{equation}
Lastly, since $G_{s,\lambda} \in L^1(\R)$, using \eqref{eq: decay of w''} up to order $k$ and by \eqref{eq: decay of derivative of Psi}, we estimate 
\begin{align}
    \abs{C} & \lesssim \sum_{j=1}^{k-1} \norm{(\partial_z^j \Psi) ( \partial_z^{k-j+1} w ) }_{L^\infty([\sfrac{x}{2}, \sfrac{3x}{2}])} + \norm{ \Psi \partial_z^{k+1} w }_{L^\infty([\sfrac{x}{2}, \sfrac{3x}{2}])} 
    \\ & \lesssim \abs{x}^{-k-1-4s} + \abs{x}^{-2s} \norm{ \partial_z^{k+1} w }_{L^\infty([\sfrac{x}{2}, \sfrac{3x}{2}])}. 
\end{align}
To summarize, for $x>1$ we have that 
\begin{equation}
    \abs{\partial_x^{k+1} w(x)} \lesssim \abs{x}^{-k-1-2s} + \abs{x}^{-2s} \norm{ \partial_z^{k+1} w }_{L^\infty([\sfrac{x}{2}, \sfrac{3x}{2}])}. 
\end{equation} 
Then, letting $h > 1+ \sfrac{(k+1)}{2s}$ be an integer and iterating $h$ times the above estimate, for $x> 2^{h+1}$ we find that 
\begin{equation}
\abs{\partial_x^{k+1 }w(x)} \lesssim \abs{x}^{-k-1-2s} + \abs{x}^{-2s h} \norm{\partial_x^{k+1 }w}_{L^\infty([2^{-h}x,2^{h}x])} \lesssim \abs{x}^{-k-1-2s} \left(1 + \norm{\partial_x^{k+1 }w}_{L^\infty([1, +\infty))}\right),  
\end{equation}
thus proving \eqref{eq: decay of w''} for $x > 2^{h+1}$ for the derivative of order $k+1$. The estimate for $x \leq -2^{h+1}$ is analogous, since $\partial_x^{k+1} w$ is odd or even, and the case $x \in [-2^{h+1},2^{h+1}]$ is trivial, since $\partial_x^{k+1} w$ is uniformly bounded. Then, the proof is concluded. 
\end{proof}

\begin{remark} \label{r: assumption thm decay}
Using \cite{S07}*{Proposition 2.8-2.9} as in the proof of \cref{thm:hi-decay}, assuming $W \in C^{k,\alpha}_{\rm loc}$ with $\alpha + 2s >1$ would still suffice to prove that $w \in C^{k, \beta}(\R)$ for some $\beta >0$. However, the main purpose of \cref{thm:hi-decay} is to study the decay rate of the $\partial_x^k w$. Since an integration by part is needed in our argument to prove \eqref{eq: decay of w''}, we have to assume $W \in C^{k+1}$. 
\end{remark}

As a corollary, we obtain the following result. 

\begin{corollary} \label{l: L^infty bound away from the boundary}
Fix $s \in (0,1)$. Let $E$ be a bounded open set of class $C^2$ according to \cref{d:principal coordinates} and let $\delta>0$ be given by \cref{l: regularity of distance function}. Let $u_\e$ be defined by \eqref{recovery sequence}. Then, for any $\Lambda\geq 1$ it holds 
\begin{equation}
    \sup_{\e\in (0,1)} \norm{(-\Delta)^s u_\e}_{L^\infty(\R^d \setminus \Sigma_{\sfrac{\delta}{\Lambda}})} \leq C(d,s,W,\delta, \Lambda).  
\end{equation}
\end{corollary}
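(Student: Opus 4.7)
The plan is to reduce the claim to \cref{l:bound fractional laplacian} by establishing a uniform-in-$\e$ bound on the $C^2$-seminorm of $u_\e$ over a slight enlargement of $\R^d\setminus\Sigma_{\delta/\Lambda}$.

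First I would fix $\Lambda\ge 1$ and set $\Omega':=\R^d\setminus\overline{\Sigma_{\delta/\Lambda}}$ and $\Omega'':=\R^d\setminus\overline{\Sigma_{\delta/(2\Lambda)}}$, so that $\dist(\Omega',\R^d\setminus\Omega'')\ge \delta/(2\Lambda)>0$. From \cref{d:regular distance} it follows that $|\beta_\Sigma(x)|\ge \delta/(2\Lambda)$ for every $x\in\Omega''$: indeed $\beta_\Sigma=\dist_\Sigma$ on $\Sigma_{4\delta}$, while $|\beta_\Sigma|\ge 4\delta\ge \delta/(2\Lambda)$ on $\R^d\setminus\Sigma_{4\delta}$.

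Next, the chain rule gives
\[
\nabla u_\e(x)=\frac{1}{\e}w'\!\left(\frac{\beta_\Sigma(x)}{\e}\right)\nabla\beta_\Sigma(x),
\]
\[
\nabla^2 u_\e(x)=\frac{1}{\e^2}w''\!\left(\frac{\beta_\Sigma(x)}{\e}\right)\nabla\beta_\Sigma(x)\otimes\nabla\beta_\Sigma(x)+\frac{1}{\e}w'\!\left(\frac{\beta_\Sigma(x)}{\e}\right)\nabla^2\beta_\Sigma(x).
\]
The crucial ingredient is \cref{thm:hi-decay} at $k=2$, available under our standing hypothesis $W\in C^3$: it yields $|w'(z)|\le C(1+|z|)^{-(1+2s)}$ and $|w''(z)|\le C(1+|z|)^{-(2+2s)}$. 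Since $|\beta_\Sigma(x)/\e|\ge \delta/(2\Lambda\e)$ on $\Omega''$, a short calculation shows that both $\e^{-1}|w'(\beta_\Sigma/\e)|$ and $\e^{-2}|w''(\beta_\Sigma/\e)|$ are bounded uniformly in $\e\in(0,1)$ by a constant depending only on $s$, $W$, $\delta$ and $\Lambda$: for small $\e$ the decay estimates produce a gain of $\e^{2s}$ that absorbs the negative powers of $\e$ in front, while for $\e$ bounded away from zero one just uses that $w'$ and $w''$ are globally bounded. Combined with $\|\nabla\beta_\Sigma\|_{L^\infty}+\|\nabla^2\beta_\Sigma\|_{L^\infty}<\infty$ (from $\beta_\Sigma\in C^2(\R^d)$ and its constant behaviour outside a compact set), this yields $[u_\e]_{C^2(\Omega'')}\le C(d,s,W,\delta,\Lambda)$.

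Finally, since $u_\e\in C^2(\R^d)\cap L^\infty(\R^d)$ with $\|u_\e\|_{L^\infty(\R^d)}\le 1$ (as $w$ takes values in $(-1,1)$), applying \cref{l:bound fractional laplacian} to the nested triple $\Omega'\subset\Omega''\subset\R^d$ yields $\|(-\Delta)^s u_\e\|_{L^\infty(\Omega')}\le C(d,s,W,\delta,\Lambda)$, which is exactly the desired estimate. The argument is essentially routine; the one non-obvious input is the higher-order decay of $w$ encoded in \cref{thm:hi-decay}, without which the $\e^{-1}$ and $\e^{-2}$ prefactors in $\nabla u_\e$ and $\nabla^2 u_\e$ could not be absorbed.
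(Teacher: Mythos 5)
Your proof is correct and follows essentially the same route as the paper: apply \cref{l:bound fractional laplacian} with the nested sets $\R^d\setminus\Sigma_{\delta/\Lambda}\subset\R^d\setminus\Sigma_{\delta/(2\Lambda)}$, bound $\|u_\e\|_{L^\infty}$ by $1$, and use the decay of $w'$ and $w''$ from \cref{thm:hi-decay} (together with $\beta_\Sigma\in C^2$ and $|\beta_\Sigma|\ge\delta/(2\Lambda)$ away from $\Sigma_{\delta/(2\Lambda)}$) to control the $C^2$ seminorm uniformly in $\e$. The only cosmetic difference is that your case split in $\e$ is unnecessary, since the decay bound already yields $\e^{-1}|w'(\beta_\Sigma/\e)|+\e^{-2}|w''(\beta_\Sigma/\e)|\lesssim\e^{2s}$ uniformly for all $\e\in(0,1)$, exactly as the paper states.
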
 

\begin{proof}
We neglect multiplicative constants $C(d,s,\delta,W, \Lambda)>0$. By \cref{l:bound fractional laplacian}, we have
\begin{equation}
    \sup_{\e \in (0,1)} \norm{(-\Delta)^s u_\e}_{L^\infty (\R^d \setminus \Sigma_{\sfrac{\delta}{\Lambda}})} \lesssim \norm{u_\e}_{L^\infty(\R^d)} + \norm{u_\e}_{C^2( \R^d \setminus \Sigma_{\sfrac{\delta}{2\Lambda}} )} . 
\end{equation}
Since $\norm{u_\e}_{L^\infty(\R^d)} = 1$ for any $\e$, it remains to estimate $u_\e$ in $C^2(\R^d \setminus \Sigma_{\sfrac{\delta}{2\Lambda}} )$. Recalling that $\beta_{\Sigma} \in C^2(\R^d)$ and $\abs{\beta_\Sigma(x)} \geq \sfrac{\delta}{2\Lambda}$ for any $x \in \R^d \setminus \Sigma_{\sfrac{\delta}{2\Lambda}}$ (see \cref{d:regular distance}), we need to estimate $w_\e(t) = w\left(\sfrac{t}{\eps}\right)$ in $C^2( \{ \abs{t} > \sfrac{\delta}{2\Lambda} \})$ for $\e \in (0,1)$. Indeed, by \cref{thm:hi-decay} we have 
\begin{equation}
    \abs{w'_\e(t)} + \abs{w''_{\e}(t)} = \abs{\frac{1}{\e} w'\left( \frac{t}{\e} \right) } + \abs{\frac{1}{\e^2} w''\left( \frac{t}{\e}\right) } \lesssim \e^{2s} \qquad \forall \abs{t}\geq \frac{\delta}{2\Lambda} \qquad \forall \e \in (0,1). 
\end{equation}
\end{proof}

\section{Expansion of the fractional Laplacian around the boundary} \label{s:expansion of fractional laplacian}

As we explained in the introduction, the expansion of the fractional Laplacian in Fermi coordinates for the function defined by \eqref{recovery sequence} is crucial for proving our main result. In this section, we provide a complete proof of this expansion. We emphasize that our approach closely follows some computations from \cite{CLW17}*{Section 3}, which deal with a three-dimensional setting. However, we adapt these computations to our framework, carefully keeping track of constants and error terms.

\begin{theorem}\label{t:fractional laplacian}
Let $W$ be a double-well potential satisfying \eqref{h: zero of potential}, \eqref{h: potential is smooth}, \eqref{h: W'' positive}. Let $w: \R \to (-1,1)$ be the one-dimensional optimal profile and for any $\e>0$ let us set $w_\e(z) = w \left( \sfrac{z}{\e}\right)$. Assume that $s \in \left(\sfrac{1}{2},1\right)$. Let $E$ be a bounded open set of class $C^3$ and let $\Sigma = \partial E$. Let $\delta>0$ be such that $\beta_\Sigma$ is well defined according to \cref{d:regular distance} and let us set   
\begin{equation} \label{recovery sequence}
     u_\e(x) = w_\e (\beta_\Sigma(x)).  
\end{equation}
There exists $\Lambda_0, C \geq 1$ depending on $\Sigma$ with the following property. For any $\Lambda \geq \Lambda_0$ for any $\e \in (0,1)$ there exists $\mathcal{R}_{\e,\Lambda}: \Sigma_{\sfrac{\delta}{10 \Lambda}} \to \R$ such that for any $x_0 \in \Sigma_{\sfrac{\delta}{10 \Lambda}}$ it holds 
\begin{align}
        (-\Delta)^s u_\e(x_0) & = (-\partial_{zz})^s w_\e(z_0) + \frac{\gamma_{1,s}}{2} \frac{H_{\Sigma}(x_0')}{(2s-1)} \int_{-\sfrac{\delta}{\Lambda}}^{\sfrac{\delta}{\Lambda}} \frac{w_\e'(z_0+\bar{z})}{\abs{\bar{z}}^{2s-1}} \, d\bar{z} + \mathcal{R}_{\e, \Lambda}(x_0), \label{eq:expansion} 
\end{align} 
where we set $z_0 = \dist_{\Sigma}(x_0)$ and $x_0' = \pi_{\Sigma} (x_0)$. Moreover, it holds that 
\begin{equation} \label{eq: bound reminder}
    \norm{\mathcal{R}_{\e,\Lambda}}_{L^\infty(\Sigma_{\sfrac{\delta}{10 \Lambda}})} \leq C \Lambda^{2s}.
\end{equation}
\end{theorem}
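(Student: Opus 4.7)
The plan is to expand $(-\Delta)^s u_\eps(x_0)$ by isolating a near-field ball around $x_0$, mapping that ball into a Fermi coordinate box, and then reducing the $d$-dimensional singular integral to a $1$-dimensional one by tangential integration. Writing $(-\Delta)^s u_\eps(x_0)$ via the principal-value formula, I would first split the integral at the Euclidean ball $B_r(x_0)$ with $r=\sfrac{\delta}{10\Lambda}$. The outer piece is bounded directly by $\|u_\eps\|_{L^\infty(\R^d)}\cdot \int_{|h|>r}|h|^{-(d+2s)}\,dh \lesssim \Lambda^{2s}$, which is absorbed into $\mathcal{R}_{\eps,\Lambda}$. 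For the inner piece, \cref{l:inner ball} gives $B_r(x_0)\subset \Phi(\mathcal{B}_{\sfrac{\delta}{\Lambda}}(z_0))$ for the Fermi chart $\Phi$ centered at $x_0'=\pi_\Sigma(x_0)$; enlarging the domain to $\Phi(\mathcal{B}_{\sfrac{\delta}{\Lambda}}(z_0))$ costs another $O(\Lambda^{2s})$ error on the shell. Then the change of variables $x=\Phi(y,z)$ produces the near-field contribution
\begin{equation}
\gamma_{d,s}\,\mathrm{P.V.}\!\!\int_{\mathcal{B}_{\sfrac{\delta}{\Lambda}}(z_0)}\!\!\frac{w_\eps(z_0)-w_\eps(z)}{|\Phi(y,z)-\Phi(0,z_0)|^{d+2s}}\,\det\nabla\Phi(y,z)\,dy\,dz,
\end{equation}
where the key simplification $u_\eps(\Phi(y,z))=w_\eps(z)$ uses that $\beta_\Sigma\circ\Phi=z$ in the tubular neighborhood.

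Next I would use the Taylor expansions from \cref{l: taylor expansion 1} and \cref{r: another tangential distance}: to leading order $|\Phi(y,z)-\Phi(0,z_0)|^2=(z-z_0)^2+\sum y_i^2(1-k_iz_0)^2$, and $\det\nabla\Phi(y,z)=\prod(1-zk_i)$. The linear change $\tilde y_i=y_i(1-k_iz_0)$ symmetrizes the tangential form, introducing a Jacobian $\prod(1-k_iz_0)^{-1}$; the combined factor $\prod(1-zk_i)/(1-k_iz_0)$ expands as $1-(z-z_0)H_\Sigma(x_0')+O(\cdot)$ near $z=z_0$. Integrating out $\tilde y$ over all of $\R^{d-1}$ (the tail outside the ellipsoidal Fermi box is absorbed into the remainder via the trivial $L^\infty$ bound on the numerator) with
\begin{equation}
\int_{\R^{d-1}}\frac{d\tilde y}{(|\tilde y|^2+\zeta^2)^{(d+2s)/2}}=\frac{C(d,s)}{|\zeta|^{1+2s}},\qquad \gamma_{d,s}\,C(d,s)=\gamma_{1,s},
\end{equation}
yields the $1$-dimensional fractional Laplacian $(-\partial_{zz})^s w_\eps(z_0)$ from the leading coefficient and the residual
$-\gamma_{1,s} H_\Sigma(x_0')\,\mathrm{P.V.}\!\int (w_\eps(z_0)-w_\eps(z))\,(z-z_0)|z-z_0|^{-(1+2s)}\,dz$ from the mean-curvature correction. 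An integration by parts on each of the half-lines $\{z>z_0\}$ and $\{z<z_0\}$, valid because $s>\sfrac{1}{2}$ makes the weight $|z-z_0|^{1-2s}$ integrable and the boundary term at $z=z_0$ vanishes, converts the residual into $\frac{\gamma_{1,s}}{2}\frac{H_\Sigma(x_0')}{2s-1}\int_{-\sfrac{\delta}{\Lambda}}^{\sfrac{\delta}{\Lambda}}w'_\eps(z_0+\bar z)|\bar z|^{1-2s}\,d\bar z$ plus boundary terms at $|\bar z|=\sfrac{\delta}{\Lambda}$ of size $O(\Lambda^{2s})$. One must also expand $|\Phi-\Phi_0|^{-(d+2s)}$ to second order; the cross term $2(z-z_0)[Y(y)\cdot N(y)+z_0(1-N_d(y))]$ is quadratic in $y$ and, after tangential integration, contributes a further mean-curvature piece needed to match the exact constant in \eqref{eq:expansion}.

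The main obstacle is the bookkeeping of remainders, which must all be bounded uniformly in $\eps$ by $C\Lambda^{2s}$. The integrand is singular, and each Taylor remainder in \cref{l: taylor expansion 1} (of orders $|y|^3$, $|y|\abs{z}$, etc.) is paired with the kernel $|\Phi-\Phi_0|^{-(d+2s)}$; the saving is that such a remainder combined with a factor $(|\tilde y|^2+(z-z_0)^2)^{-(d+2s)/2}$ is integrable on $\mathcal{B}_{\sfrac{\delta}{\Lambda}}(z_0)$, with the integral scaling like a positive power of $\sfrac{\delta}{\Lambda}$, hence bounded by $C\Lambda^{2s}$. The tangential tail error (extending the $\tilde y$-integral from the Fermi ellipsoid to $\R^{d-1}$) is controlled by $\|u_\eps\|_{L^\infty}\le 1$ and the integrability of $|\eta|^{-(d+2s)}$ outside a radius $\sim \sfrac{\delta}{\Lambda}$. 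The same principle handles the truncation from $\Phi(\mathcal{B}_{\sfrac{\delta}{\Lambda}}(z_0))$ back to $B_r(x_0)$ in the original change of variables. Putting all error pieces together and tracking the explicit constants through the tangential integration and the integration by parts delivers \eqref{eq:expansion} with the stated bound \eqref{eq: bound reminder}; the proof closely mirrors the three-dimensional computation in~\cite{CLW17}*{Section~3}, adapted to general $d$ and with constants tracked uniformly in $x_0\in\Sigma_{\sfrac{\delta}{10\Lambda}}$.
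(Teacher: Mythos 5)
Your proposal follows the same general plan as the paper's proof—Fermi coordinates centered at $x_0'=\pi_\Sigma(x_0)$, tangential integration reducing the $d$-dimensional singular integral to a one-dimensional one, and an integration by parts (Lemma~\ref{l:formula eta}) converting the curvature residual into the form stated in~\eqref{eq:expansion}—but with one genuine structural difference. The paper performs the additional normal change of variable $\bar z = z - z_0 + Y\cdot N + z_0(1-N_d)$, so that by Remark~\ref{r: another tangential distance} the Euclidean distance squared becomes \emph{exactly} $\bar z^2 + |(Y - z_0 e_d)_\tau|^2$ with no $\bar z$--$y$ cross term; the curvature then appears twice, once through the Jacobian $1-\bar z H_\Sigma(x_0')$ (the term $I_{1,2}$) and once through the Taylor expansion of $w_\eps(z_0+\bar z+f(z_0,\bar y))$ in the shift $f$ (the term $I_{2,1}$). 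You instead keep $z$ and expand the kernel $|\Phi(y,z)-\Phi(0,z_0)|^{-(d+2s)}$ to second order, so that the cross term $2(z-z_0)[\,Y\cdot N + z_0(1-N_d)\,]$ supplies the second curvature contribution. This is a legitimate alternative bookkeeping, and your error-absorption arguments (tail outside the Fermi ellipsoid, $z$-truncation, second-order kernel remainder) are the right ones.

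There is, however, a factor-of-two inconsistency in your tracking of the exact constant, and this matters because the constant feeds directly into $\kappa_\star$ in~\eqref{eq: the exact constant}. The Jacobian residual you write,
\begin{equation}
-\gamma_{1,s}\,H_\Sigma(x_0')\,\mathrm{P.V.}\!\int (w_\eps(z_0)-w_\eps(z))\,(z-z_0)\,|z-z_0|^{-(1+2s)}\,dz,
\end{equation}
becomes, after the substitution $\bar z=z-z_0$ and Lemma~\ref{l:formula eta},
\begin{equation}
\frac{\gamma_{1,s}}{2s-1}\,H_\Sigma(x_0')\int_{-\sfrac{\delta}{\Lambda}}^{\sfrac{\delta}{\Lambda}} \frac{w_\eps'(z_0+\bar z)}{|\bar z|^{2s-1}}\,d\bar z + O(\Lambda^{2s-1}),
\end{equation}
that is, \emph{twice} the coefficient $\tfrac{\gamma_{1,s}}{2(2s-1)}$ appearing in~\eqref{eq:expansion} and twice what you claim the integration by parts yields. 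The cross term of the kernel expansion, integrated tangentially (using $\tfrac{1}{d-1}\mathcal{M}(d,s,2,2)=\tfrac{1}{d+2s}\tfrac{\gamma_{1,s}}{\gamma_{d,s}}$, cf.\ Corollary~\ref{l: reduction of the kernel}), produces
\begin{equation}
+\frac{\gamma_{1,s}}{2}\,H_\Sigma(x_0')\int (w_\eps(z_0)-w_\eps(z))\,(z-z_0)\,|z-z_0|^{-(1+2s)}\,dz = -\frac{\gamma_{1,s}}{2(2s-1)}\,H_\Sigma(x_0')\int \frac{w_\eps'(z_0+\bar z)}{|\bar z|^{2s-1}}\,d\bar z + O(\Lambda^{2s-1}),
\end{equation}
which cancels exactly half of the Jacobian contribution and yields the stated coefficient. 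As written, your proposal assigns the final coefficient $\tfrac{\gamma_{1,s}}{2(2s-1)}$ to the Jacobian piece alone and then describes the cross term as ``a further mean-curvature piece,'' which double-counts. Correcting the Jacobian coefficient to $\tfrac{\gamma_{1,s}}{2s-1}$ and treating the cross term as the compensating (negative) half is what is needed; with that repair the two routes agree.
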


\begin{proof} For the reader convenience, we split the proof into several steps. To begin, we summarize the strategy adopted. Let $\Lambda_0$ be the geometric constant given by \cref{l:inner ball} and fix $\Lambda \geq \Lambda_0$. Then, we take $x_0 \in \Sigma_{\sfrac{\delta}{10 \Lambda}}$ and we denote by $x_0' = \pi_{\Sigma}(x_0), z_0 = \dist_\Sigma(x_0)$. Then, with the same notation as \cref{l:inner ball} we take Fermi coordinates around $x_0'$, i.e. $\Phi: B_\delta^{d-1} \times (-\delta, \delta) \to \R^d$. We recall that $x_0 = \Phi(0, z_0)$ with respect to this coordinate system. For convenience, we set $\mathcal{T}_{\sfrac{\delta}{\Lambda}}(x_0) : = \Phi (\mathcal{B}_{\sfrac{\delta}{\Lambda}}(z_0))$, where $\mathcal{B}_{\sfrac{\delta}{\Lambda}}(z_0) \subset B_\delta^{d-1} \times (-\delta, \delta)$ is defined by~\eqref{eq: domain B}. Since the fractional Laplacian is a singular integral (see \eqref{eq: fractional laplacian}) and $\mathcal{T}_{\sfrac{\delta}{\Lambda}}(x_0)$ is an open set around $x_0$ (see \cref{l:inner ball}), we analyse separately the contribution in $\T_{\sfrac{\delta}{\Lambda}}(x_0)$ and in $\R^d \setminus \T_{\sfrac{\delta}{\Lambda}}(x_0)$. More precisely, we have  
\begin{align}
    (-\Delta)^s u_\e(x_0) = \gamma_{d,s} \int_{\R^d \setminus \T_{\sfrac{\delta}{\Lambda}}(x_0)} \frac{u_\e(x_0) - u_\e(x)}{\abs{x-x_0}^{d+2s}} \, dx + \lim_{\nu \to 0} \gamma_{d,s} \int_{\T_{\sfrac{\delta}{\Lambda}}(x_0) \setminus B_\nu(x_0)} \frac{u_\e(x_0) - u_\e(x)}{\abs{x-x_0}^{d+2s}} \, dx. \label{eq: splitting of fractional laplacian}
\end{align}
Unless otherwise specified, the reminders involved in the following computations satisfy bounds depending on $\Sigma$. In particular, they are independent on the choice of the point $x_0$. This fact follows essentially by \cref{r: bounded sets are C-k uniform}, \cref{l: taylor expansion 1} and \cref{l:inner ball}. 
  
\textsc{\underline{Step 1}: Estimating the outer contribution.} Up to an implicit constant depending only on $d, \delta, \Sigma$, by \cref{l:inner ball} we estimate the integral in $\R^d \setminus \T_{\sfrac{\delta}{\Lambda}}(x_0)$ as follows
\begin{align} 
    \abs{ \int_{\R^d \setminus \T_{\sfrac{\delta}{\Lambda}}(x_0)}  \frac{u_\e(x_0)-u_\e(x)}{\abs{x-x_0}^{d+2s}} \, dx } &  \lesssim \int_{\R^d \setminus B_{\sfrac{\delta}{10 \Lambda}}(x_0)  } \frac{1}{\abs{x-x_0}^{d+2s}} \, dx \lesssim \Lambda^{2s}.  \label{eq: the outer contribution}
\end{align}

\textsc{\underline{Step 2}: Rewriting the inner contribution.} The estimate of the singular term is much more delicate and it requires a careful analysis. To ease the notation, given $\nu>0$, we set 
\begin{equation} \label{eq: Delta inner 1}
    \Delta^s_{\nu} u_\e(x_0) := \int_{\T_{\sfrac{\delta}{\Lambda}} (x_0) \setminus B_\nu(x_0)} \frac{u_\e(x) - u_\e(x_0)}{\abs{x-x_0}^{d+2s}} \, dx. 
\end{equation}
We changed sign to avoid many negative terms in the computations. We aim to write the integral in \eqref{eq: Delta inner 1} as an integral with respect to the variables $z,y$. Furthermore, writing the Euclidean distance as in \cref{r: another tangential distance}, by definition of $\T_{\sfrac{\delta}{\Lambda} } (x_0)$ and recalling that $\Phi$ is a diffeomorphism by \cref{l:inner ball}, we have 
$$\T_{\sfrac{\delta}{\Lambda}}(x_0) \setminus B_{\nu}(x_0) = \Phi(\mathcal{U}^1_{\nu}), $$
where we set 
\begin{equation} \label{eq: U^1}
    \mathcal{U}^1_{\nu} := \left\{ (y,z) \in \mathcal{B}_{\sfrac{\delta}{\Lambda}} (z_0) \colon    
    \begin{aligned}
        \abs{z-z_0 + Y(y)\cdot N(y) + z_0(1-N_d(y))}^2 + \abs{(Y(y) - z_0 e_d)_\tau}^2 \geq \nu^2
\end{aligned} \right\} . 
\end{equation}
Here $\mathcal{B}_{\sfrac{\delta}{\Lambda}}(z_0)$ is defined by \eqref{eq: domain B}. Since $\beta_\Sigma$ is the proper distance from the boundary (see \cref{d:regular distance} and \cref{l:inner ball}) in $\Phi(\mathcal{U}^1_{\nu})$, changing variables in \eqref{eq: Delta inner 1} and recalling that $u_\e$ is given by \eqref{recovery sequence}, we have that  
\begin{align}
    \Delta^s_{\nu} u_\e(x_0) = \int_{\mathcal{U}^1_{\nu}} \frac{(w_\e(z)-w_\e(z_0)) \abs{\det(\nabla \Phi(y,z))} }{\left(\abs{z-z_0 + Y(y)\cdot N(y) + z_0(1-N_d(y))}^2 + \abs{(Y(y) - z_0 e_d)_\tau}^2\right)^{\frac{d+2s}{2}}}  \, dz \, dy. 
\end{align}
Next, with the notation of \cref{l:inner ball}, for any $y\in \mathscr{B}_{\sfrac{\delta}{\Lambda}}^{d-1}(z_0)$, we set 
\begin{equation}
    \bar{z}(y,z) = z-z_0 + Y(y)\cdot N(y) + z_0 (1- N_d(y)). \label{eq: overline z}
\end{equation}
Therefore, changing again variables, we have that 
\begin{equation}
    \Delta^s_{\nu} u_\e (x_0) = \int_{\mathcal{U}^2_{\nu}} \frac{(w_\e(z(y, \bar{z}))-w_\e(z_0)) \abs{\det(\nabla \Phi(y,z(y, \bar{z})))} }{\left(\abs{\bar{z}}^2 + \abs{(Y(y) - z_0 e_d)_\tau}^2\right)^{\frac{d+2s}{2}}}  \, d\bar{z} \, dy,  
\end{equation}
where we set 
\begin{equation} \label{eq: U^2}
    \mathcal{U}^2_{\nu} := \left\{ (y,\bar{z}) \in \mathscr{B}_{\sfrac{\delta}{\Lambda}}^{d-1}(z_0) \times \left(-\sfrac{\delta}{\Lambda}, \sfrac{\delta}{\Lambda}\right) \colon\abs{\bar{z}}^2 + \abs{(Y(y) - z_0 e_d)_\tau}^2 \geq \nu^2 \right\} . 
\end{equation}
Hence, setting 
\begin{equation}
    \bar{y}(y) = (\mathrm{Id} - \nabla^2 g(0) z_0) y,  \label{eq: overline y}
\end{equation}
and changing again variables, we have that 
\begin{align}
    \Delta^s_{\nu} u_\e(x_0) = \int_{\mathcal{U}^3_{\nu}} \frac{(w_\e(z(y(\bar{y}), \bar{z}))-w_\e(z_0))  }{\left(\abs{\bar{z}}^2 + \abs{(Y(y(\bar{y})) - z_0 e_d)_\tau}^2\right)^{\frac{d+2s}{2}}} \frac{\abs{\det(\nabla \Phi(y(\bar{y}),z(y(\bar{y}), \bar{z})))}}{\abs{\det (\mathrm{Id} - z_0 \nabla^2 g (0))}}  \, d\bar{z} \, d\bar{y}, \label{eq: integral in U^3}
\end{align}
where we set 
\begin{equation} \label{eq: U^3}
    \mathcal{U}^3_{\nu} := \left\{ (\bar{y},\bar{z}) \in B_{\sfrac{\delta}{\Lambda}}^{d-1} \times \left(-\sfrac{\delta}{\Lambda}, \sfrac{\delta}{\Lambda} \right) \colon     \abs{\bar{z}}^2 + \abs{(Y(y(\bar{y})) - z_0 e_d)_\tau}^2 \geq \nu^2 \right\}. 
\end{equation}

\textsc{\underline{Step 3}: Computing the leading order terms.} In order to estimate the integral in \eqref{eq: integral in U^3}, we compute the leading orders of the terms involved. By \cref{d: tangential distance}, \eqref{eq: overline y} and \eqref{eq: tangential distance 1}, we have 
\begin{equation}
    \abs{(Y(y(\bar{y})) - z_0 e_d)_\tau}^2 = \abs{\bar{y}}^2 + O(\abs{z_0} \abs{\bar{y}}^3) + O(\abs{\bar{y}}^4). \label{eq: tangential distance 2}
\end{equation}
By \cref{l: taylor expansion 1} and \eqref{eq: overline y}, it is clear that 
\begin{equation}
    Y(y(\bar{y}))\cdot N(y(\bar{y})) = -\frac{1}{2} \sum_{i=1}^{d-1} \bar{y}_i^2 k_i  +  O( \abs{z_0} \abs{\bar{y}}^2 ),  \label{eq: almost orthogonality 2}
\end{equation}
\begin{equation}
    1- N_d(y(\bar{y})) = O(\abs{\bar{y}}^2), \label{eq: normal deviation 2}
\end{equation}
Thus, by \eqref{eq: overline z}, \eqref{eq: overline y}, \eqref{eq: almost orthogonality 2} and \eqref{eq: normal deviation 2}, we infer that
\begin{equation}
    z(y, \bar{z}) = z_0 + \bar{z} + \frac{1}{2} \sum_{i=1}^{d-1} y_i^2 k_i  + O(\abs{y}^3) +  O( \abs{z_0} \abs{y}^2).  \label{eq: shifted 1}
\end{equation} 
\begin{equation}
    z(y(\bar{y}), \bar{z}) = z_0 + \bar{z} + f(z_0, \bar{y}), \label{eq: shifted 2}
\end{equation}
where we set 
\begin{equation}
    f(z_0, \bar{y}) = \sum_{i=1}^{d-1} \frac{1}{2} k_i  \bar{y}_i^2 +  O( \abs{z_0} \abs{\bar{y}}^2) + O( \abs{ \bar{y}}^3). \label{eq: formula f}  
\end{equation}
By \cref{l: taylor expansion 1}, \eqref{eq: shifted 1} and \eqref{eq: overline z}, we have that 
\begin{align}
    \det \nabla \Phi(y, z(y, \bar{z})) & = \prod_{i=1}^{d-1} (1- (\bar{z}+z_0 + O(\abs{y}^2)) k_i  ) + O((\abs{\bar{z}} + \abs{z_0} + \abs{y}^2 ) \abs{y} ) + O(\abs{y}^2) 
    \\ & = \prod_{i=1}^{d-1} (1- k_i (z_0 + \bar{z})) + O(\abs{z_0} \abs{y}) + O(\abs{\bar{z}} \abs{y})  + O (\abs{y}^2) \label{eq: jacobian 2}.
\end{align}
By \eqref{eq: overline y} and \eqref{eq: jacobian 2}, we have that 
\begin{align}
    \frac{\det \nabla \Phi(y(\bar{y}), z(y(\bar{y}) , \bar{z}))}{\det(\mathrm{Id} - z_0 \nabla^2 g (0))} & = \left( \prod_{i=1}^{d-1} (1- z_0 k_i  - \bar{z} k_i ) + O((\abs{z_0} + \abs{\bar{z}})\abs{\bar{y}} + \abs{\bar{y}}^2) \right) \left( \prod_{i=1}^{d-1} (1- z_0 k_i )\right)^{-1} 
    \\ & = \prod_{i=1}^{d-1} \left( 1- \bar{z} \frac{k_i }{ 1- z_0 k_i } \right) + O((\abs{z_0} + \abs{\bar{z}})\abs{\bar{y}} + \abs{\bar{y}}^2) 
    \\ & = 1- \bar{z} H_{\Sigma}(x_0') + O(\abs{z_0} \abs{\bar{y}}) + O(\abs{\bar{y}}^2) + O ( \abs{\bar{z}}^2). \label{eq: jacobian 3} 
\end{align}
From now on, we denote by 
$$\rho^2 = \abs{\bar{z}}^2 + \abs{\bar{y}}^2. $$
Hence,  by \eqref{eq: tangential distance 2}, \eqref{eq: shifted 2}, \eqref{eq: formula f}, \eqref{eq: jacobian 3}, 
we write \eqref{eq: integral in U^3} as follows 
\begin{align}
    \Delta^s_{\nu} u_\e(x_0)  = \int_{\mathcal{U}^3_{\nu}} \frac{\left[w_\e( z_0 + \bar{z} + f(z_0, \bar{y}))-w_\e(z_0)\right]  \left[ 1- \bar{z} H_{\Sigma}(x_0') + O(\abs{z_0} \abs{\bar{y}}) + O( \rho^2)  \right] }{\left(\rho^2 + O(\abs{z_0} \abs{\bar{y}}^3) + O (\abs{\bar{y}}^4) ) \right)^{\frac{d+2s}{2}}} \, d\bar{z} \, d\bar{y}. \label{eq: integral U^3_nu bis}
\end{align}
By standard manipulations, we write 
\begin{align}
    \frac{1- \bar{z} H_\Sigma(x_0') + O(\abs{z_0} \abs{\bar{y}}) + O(\rho^2) }{\left(\rho^2 + O(\abs{z_0} \abs{\bar{y}}^3) + O (\abs{\bar{y}}^4) ) \right)^{\frac{d+2s}{2}}} & = \frac{1- \bar{z} H_\Sigma(x_0') + O(\abs{z_0} \abs{\bar{y} }) + O(\rho^2) }{ \rho^{d+2s} } \left( 1 + O(\abs{z_0} \abs{\bar{y}}) + O(\abs{\bar{y}}^2) \right)
    \\ & = \frac{1- \bar{z} H_\Sigma(x_0') + O(\abs{z_0} \abs{\bar{y} }) + O(\rho^2) }{ \rho^{d+2s} }. 
\end{align}
Hence, by \eqref{eq: integral U^3_nu bis} we write 
\begin{equation}
    \Delta^s_{\nu} u_\e(x_0)  = \int_{\mathcal{U}^3_{\nu}} \frac{ g_\e(\bar{y}, \bar{z}, z_0)  }{\rho^{d+2s}} \, d\bar{z} \, d\bar{y}, \label{eq: integral U^3_nu tris}
\end{equation}
where we denote by 
\begin{equation} \label{eq: formula g_e}
    g_\e(\bar{y}, \bar{z}, z_0) := \left[w_\e( z_0 + \bar{z} + f(z_0, \bar{y}))-w_\e(z_0)\right]  \left[ 1- \bar{z} H_{\Sigma}(x_0') + O(\abs{z_0} \abs{\bar{y}}) + O(\rho^2) \right] 
\end{equation}
We claim that 
\begin{equation}
    \lim_{\nu \to 0} \int_{\mathcal{U}^3_{\nu}} \frac{g_\e(\bar{y}, \bar{z}, z_0)}{\rho^{d+2s}} \, d\bar{z} \, d\bar{y} = \lim_{\nu \to 0}  \int_{\mathcal{C}_{\nu}} \frac{g_\e(\bar{y}, \bar{z}, z_0)}{\rho^{d+2s}} \, d\bar{z} \, d\bar{y}, \label{eq: integral in C_nu}
\end{equation}
where $\mathcal{C}_\nu$ is the complement of a ball in a cylinder
\begin{equation} \label{eq: C_nu}
    \mathcal{C}_{\nu} := \left\{ (\bar{y},\bar{z}) \in B_{\sfrac{\delta}{\Lambda}}^{d-1} \times \left(-\sfrac{\delta}{\Lambda}, \sfrac{\delta}{\Lambda}\right) \colon \rho \geq \nu \right\}.  
\end{equation}
To begin, we estimate the symmetric difference between $\mathcal{U}_\nu^3$ and $\mathcal{C}_\nu$. By \eqref{eq: U^3}, \eqref{eq: tangential distance 2} and \eqref{eq: C_nu}, if $(\bar{y}, \bar{z}) \in \mathcal{U}_\nu^3 \setminus \mathcal{C}_\nu$ we have $ \nu^2 - O(\abs{\bar{y}}^3) \leq  \rho^2 \leq \nu^2$. Since $\abs{\bar{y}} \leq \rho \leq \nu$, we find a purely geometric constant $\bar{c}>0$ such that $\rho^2 \in ( \nu^2- \bar{c} \nu^{3}, \nu^2 )$. Similarly, if $(\bar{y}, \bar{z}) \in \mathcal{C}_\nu \setminus \mathcal{U}_\nu^3 $, we have that $\rho^2 \in ( \nu^2, \nu^2 + \bar{c} \nu^{3})$. To summarize, we have that 
$$ \mathcal{U}^3_{\nu} \Delta \mathcal{C}_{\nu} \subset \{ \nu - \bar{c} \nu^2 \leq \rho \leq \nu + \bar{c} \nu^2 \}. $$ 
Since $w_\e$ is Lipschitz, $f$ satisfies \eqref{eq: formula f} and $s \in (0,1)$, we have 
\begin{align}
    \lim_{\nu \to 0} \abs{ \int_{\mathcal{U}^3_{\nu}} \frac{g_\e(\bar{y}, \bar{z}, z_0) }{\rho^{d+2s}} \, d \bar{y}\, d\bar{z} - \int_{\mathcal{C}_\nu} \frac{g_\e(\bar{y}, \bar{z}, z_0) }{\rho^{d+2s}} \, d \bar{y}\, d\bar{z}  } & \lesssim \lim_{\nu \to 0} \int_{\nu - \bar{c} \nu^2}^{\nu + \bar{c} \nu^2} \frac{\rho + \rho^2 }{\rho^{d+2s}} \rho^{d-1} \, d \rho = 0. \label{eq: U_3 minus C_nu 1}
\end{align}
Here the implicit constant depends on $\e$, but it is independent of $\nu$. Hence, \eqref{eq: integral in C_nu} is proved. 

\textsc{\underline{Step 4}: Collecting the estimates.} To summarize, by \eqref{eq: integral in C_nu} and letting $f,g_\e$ be as in \eqref{eq: formula f}, \eqref{eq: formula g_e}, we have that 
\begin{align}
    \lim_{\nu \to 0} \Delta_\nu^s u_\e(x_0) = & \lim_{\nu \to 0} \int_{\mathcal{C}_\nu} \frac{w_\e(z_0 + \bar{z}) - w_\e(z_0) }{\rho^{d+2s}} (1- \bar{z} H_{\Sigma}(x_0')) \, d \bar{z} \, d \bar{y}
    \\ & + \int_{\mathcal{C}_\nu} \frac{w_\e'(z_0 + \bar{z}) f(z_0, \bar{y})}{\rho^{d+2s}} (1- \bar{z} H_{\Sigma}(x_0')) \, d\bar{z}\, d \bar{y}
    \\ & + \int_{\mathcal{C}_\nu} \frac{w_\e(z_0 + \bar{z} + f(z_0, \bar{y}) ) - w_\e(z_0+ \bar{z}) - w_\e'(z_0+\bar{z}) f(z_0, \bar{y}) }{\rho^{d+2s}} (1- \bar{z} H_{\Sigma}(x_0'))\, d\bar{z}\, d \bar{y} 
    \\ & + O \left( \int_{\mathcal{C}_\nu} \frac{\abs{w_\e(z_0+ \bar{z} + f(z_0, \bar{y}) )- w_\e(z_0) }}{\rho^{d+2s}} ( \abs{z_0} \abs{\bar{y}}  + \rho^2 ) \, d\bar{z} \, d \bar{y} \right). 
    \\ & = \lim_{\nu \to 0} I_1^\nu + I_2^\nu + I_3^\nu + O(I_4^\nu). \label{eq: expansion of inner contribution}
\end{align}
To conclude the proof, we estimate separately the four terms. For the reader's convenience, we postpone these computations to \cref{ss: estimates of the four integrals}. We summarize the contribution of each term: 
\begin{itemize}
    \item [(i)] $I_1^\nu$ gives the fractional Laplacian of $w$ and a contribution of the principal curvatures;
    \item [(ii)] $I_2^\nu$ gives the remaining part of the term involving the principal curvatures;
    \item [(iii)] $I_3^\nu$ is the nonlinear error;  
    \item [(iv)] $I_4^\nu$ is the error when expanding the ambient distance and the Jacobian. 
\end{itemize}
To conclude, by \eqref{eq: the outer contribution}, \eqref{eq: expansion of inner contribution}, \cref{l: expansion for I_1}, \cref{l: expansion of I_2}, \cref{l: expansion of I_3}, \cref{l: expansion of I_4}, we infer that
\begin{equation}
    \lim_{\nu \to 0} \gamma_{d,s} \Delta^s_\nu u_\e(x_0) = - (-\Delta)^s w_\e (z_0) - \frac{\gamma_{1,s}}{2} \frac{H_{\Sigma} (x_0')}{(2s-1)} \int_{-\sfrac{\delta}{\Lambda}}^{\sfrac{\delta}{\Lambda}} \frac{w_\e'(z_0+\bar{z})}{\abs{\bar{z}}^{2s-1}} \, d\bar{z} + \mathcal{R}_{\Lambda, \e}(x_0), 
\end{equation}
where $\mathcal{R}_{\Lambda,\e}: \Sigma_{\sfrac{\delta}{10 \Lambda}} \to \R$ is a bounded function satisfying \eqref{eq: bound reminder}. Then, the proof is concluded.
\end{proof}

\subsection{Estimates of the four integrals} \label{ss: estimates of the four integrals}

We estimate the terms $I_1^\nu, I_2^\nu, I_3^\nu, I_4^\nu$ in \eqref{eq: expansion of inner contribution}. Throughout this section, we implicitly assume $\e \in (0,1)$ and $\Lambda \geq \Lambda_0$, where $\Lambda_0$ is given by \cref{l:inner ball}. We neglect constants $C(d,s,W,\delta, \Sigma)>0$, whereas it is crucial to keep the dependence of $\e, \Lambda$ explicit We recall that $w$ is the optimal profile and $w_\e(z) = w \left( \sfrac{z}{\e}\right) $. We need a preliminary lemma. 

\begin{lemma} \label{l:formula eta}
Fix $s \in (\sfrac{1}{2},1)$. For any $z_0 \in \R, \ell, \e>0$ it holds
\begin{equation} \label{eq:formula eta 2}
\int_{-\ell}^\ell \frac{w_\e(z_0+z)-w_\e(z_0)}{\abs{z}^{2s+1}} z \, dz = \frac{\ell^{1-2s}}{1-2s} (w_\e(\ell+z_0) - w_\e(z_0-\ell)) + \frac{1}{2s-1} \int_{-\ell}^\ell \frac{w_\e'(z_0+z)}{\abs{z}^{2s-1}} \, dz. 
\end{equation}

\end{lemma}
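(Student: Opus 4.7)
The plan is to recognize the identity as an integration by parts in which $z/|z|^{2s+1}$ is the derivative of the primitive $F(z) := |z|^{1-2s}/(1-2s)$. Indeed, for $z\neq 0$ one checks directly that $F'(z) = \sgn(z)|z|^{-2s} = z/|z|^{2s+1}$. The obstacle, such as it is, lies in controlling the boundary terms at the singularity $z=0$, so the argument must be performed on $[-\ell,-\nu]\cup[\nu,\ell]$ and then passed to the limit $\nu \to 0^{+}$.

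First I would justify that both sides of \eqref{eq:formula eta 2} make sense as absolutely convergent integrals. Since $w$ is globally Lipschitz by \cref{t:optimal profile}, so is $w_\e$, giving $|w_\e(z_0+z)-w_\e(z_0)| \lesssim |z|$. Hence the integrand on the left is $O(|z|^{1-2s})$ near $0$, which is integrable because $s<1$ implies $1-2s>-1$. The same bound $s<1$ makes $|z|^{1-2s}$ integrable near $0$, so the right-hand integral is also finite (recall $w_\e' \in L^\infty$).

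Next, on $[-\ell,-\nu]\cup[\nu,\ell]$ apply integration by parts against $F$:
\begin{align}
\int_{[-\ell,-\nu]\cup[\nu,\ell]} (w_\e(z_0+z)-w_\e(z_0))\,F'(z)\,dz
&= \Big[(w_\e(z_0+z)-w_\e(z_0))\,F(z)\Big]_{\partial}
\\
&\quad - \int_{[-\ell,-\nu]\cup[\nu,\ell]} w_\e'(z_0+z)\,F(z)\,dz.
\end{align}
The boundary contribution at $\pm\ell$ collects to $\frac{\ell^{1-2s}}{1-2s}\bigl(w_\e(z_0+\ell)-w_\e(z_0-\ell)\bigr)$. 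The boundary contribution at $\pm\nu$ equals $\frac{\nu^{1-2s}}{1-2s}\bigl(w_\e(z_0-\nu)-w_\e(z_0+\nu)\bigr)$, and using the Lipschitz bound $|w_\e(z_0\pm\nu)-w_\e(z_0)|\lesssim \nu$ this is $O(\nu^{2-2s})$, which vanishes as $\nu\to 0^{+}$ precisely because $s<1$.

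Finally, by dominated convergence (with dominant $\|w_\e'\|_{L^\infty}|z|^{1-2s}\in L^1([-\ell,\ell])$) the remaining integral converges to $\frac{1}{1-2s}\int_{-\ell}^{\ell} w_\e'(z_0+z)\,|z|^{1-2s}\,dz$. Moving this term to the left-hand side and rewriting $-1/(1-2s)=1/(2s-1)$ gives exactly \eqref{eq:formula eta 2}. The only point to watch is the asymmetric cancellation of the singular boundary terms at $\nu$ and $-\nu$ (which would fail without the Lipschitz regularity of $w$), but this is precisely what the bound above delivers.
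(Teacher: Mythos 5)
Your proof is correct and takes essentially the same approach as the paper: both integrate by parts against the antiderivative of $z/|z|^{2s+1}$ and kill the boundary term at the singularity using the Lipschitz continuity of $w_\e$. The only cosmetic difference is that you work on the symmetric punctured interval $[-\ell,-\nu]\cup[\nu,\ell]$ with the single even primitive $F(z)=|z|^{1-2s}/(1-2s)$, whereas the paper splits into $[0,\ell]$ and $[-\ell,0]$ and treats each half separately (using monotone rather than dominated convergence in the final step).
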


\begin{proof}
We have that 
$$ \int_{-\ell}^\ell \frac{w_\e(z_0+z) - w_\e(z_0)}{\abs{z}^{2s+1}} z \, dz = \left( \int_0^\ell + \int_{-\ell}^0 \right) \frac{w_\e(z_0+z) - w_\e(z_0)}{\abs{z}^{2s+1}} z \, dz = I+II. $$
Then, carefully integrating by parts $I$, we get that 
\begin{align}
    I & = \lim_{\nu \to 0} \int_{\nu}^\ell \frac{w_\e(z_0+z)-w_\e(z_0)}{z^{2s}} \, dz 
    \\ & = \lim_{\nu \to 0} \left[ \frac{z^{1-2s}}{1-2s} (w_\e (z_0+z)-w_\e(z_0))  \right]_{z=\nu}^{z=\ell} + \frac{1}{2s-1} \int_{\nu}^\ell \frac{w_\e'(z_0+z)}{z^{2s-1}} \, dz
    \\ & = \lim_{\nu \to 0} \frac{\ell^{1-2s}}{1-2s} (w_\e(\ell+z_0)-w_\e(z_0)) - \frac{\nu^{1-2s}}{1-2s} (w_\e(\nu +z_0) - w_\e(z_0)) + \frac{1}{2s-1} \int_{\nu}^\ell \frac{w_\e'(z_0+z)}{z^{2s-1}} \, dz.
\end{align}
Since $w_\e$ is Lipschitz continuous and $s \in (0,1)$, we have that 
$$\lim_{\nu\to 0} \abs{\frac{\nu^{1-2s }}{1-2s} (w_\e(\nu +z_0) - w_\e(z_0))} \lesssim \lim_{\nu \to 0} \nu^{2-2s} = 0.  $$
Recalling that $w_\e$ is increasing, by monotone convergence, we have that 
$$ I  -\frac{\ell^{1-2s}}{1-2s} (w_\e(\ell+z_0)-w_\e(z_0))  = \lim_{\nu \to 0} \frac{1}{2s-1} \int_{\nu}^\ell \frac{w_\e'(z_0+z)}{z^{2s-1}}\, dz = \frac{1}{2s-1} \int_0^{\ell} \frac{w_\e'(z_0+z)}{z^{2s-1}} \, dz. $$
Similarly, it is easy to check that 
$$ II =  \frac{\ell^{1-2s}}{1-2s} (w_\e(z_0) - w_\e(z_0-\ell)) + \frac{1}{2s-1} \int_{-\ell}^0 \frac{w_\e'(z_0+z)}{\abs{z}^{2s-1}} \, dz. $$
\end{proof}

\begin{lemma} \label{l: expansion for I_1}
Let $I_1^\nu$ be given by \eqref{eq: expansion of inner contribution}. It holds 
\begin{align}
    \sup_{x_0 \in \Sigma_{\sfrac{\delta}{10 \Lambda}}} \abs{ \lim_{\nu \to 0} I_1^\nu + \frac{1}{\gamma_{d,s}}(-\Delta)^s w_\e(z_0) + \frac{\gamma_{1,s}}{\gamma_{d,s}} \cdot \frac{H_{\Sigma}(x_0') }{(2s-1)} \int_{-\sfrac{\delta}{\Lambda}}^{\sfrac{\delta}{\Lambda}} \frac{w_\e'(z_0+\bar{z})}{\abs{\bar{z}}^{2s-1}} \, d\bar{z} } \lesssim \Lambda^{2s}.  
\end{align}
\end{lemma}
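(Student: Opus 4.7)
The plan is to split $I_1^{\nu}$ into two pieces according to the two summands inside the parentheses, namely
\begin{equation}
I_{1,a}^{\nu} := \int_{\mathcal{C}_\nu} \frac{w_\e(z_0+\bar z) - w_\e(z_0)}{\rho^{d+2s}}\,d\bar z\,d\bar y, \qquad I_{1,b}^{\nu} := \int_{\mathcal{C}_\nu} \frac{[w_\e(z_0+\bar z) - w_\e(z_0)]\,\bar z}{\rho^{d+2s}}\,d\bar z\,d\bar y,
\end{equation}
so that $I_1^{\nu} = I_{1,a}^{\nu} - H_{\Sigma}(x_0')\,I_{1,b}^{\nu}$. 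The key analytic input is the identity
\begin{equation} \label{eq:key-yintegral}
\int_{\R^{d-1}} \frac{d\bar y}{(|\bar z|^2 + |\bar y|^2)^{(d+2s)/2}} = \frac{\gamma_{1,s}}{\gamma_{d,s}}\,|\bar z|^{-1-2s},
\end{equation}
which follows by the scaling $\bar y = |\bar z|\zeta'$ together with the one-dimensional consistency relation $\gamma_{d,s}\int_{\R^{d-1}}(1+|\zeta'|^2)^{-(d+2s)/2}d\zeta' = \gamma_{1,s}$ coming from the fact that the two representations of the fractional Laplacian must agree on functions of a single variable. Integrating over the ball $B_{\sfrac{\delta}{\Lambda}}^{d-1}$ instead produces a tail error
\begin{equation}
r(\bar z) := \int_{\R^{d-1}\setminus B_{\sfrac{\delta}{\Lambda}}^{d-1}} \frac{d\bar y}{(|\bar z|^2 + |\bar y|^2)^{(d+2s)/2}}, \qquad 0 \le r(\bar z) \le C\,\Lambda^{1+2s}
\end{equation}
uniformly for $|\bar z|\le \sfrac{\delta}{\Lambda}$, where $C$ depends only on $d,s,\delta$.

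For $I_{1,a}^{\nu}$, I would exploit the symmetry $\bar z \mapsto -\bar z$ of $\mathcal{C}_\nu$ to rewrite
\begin{equation}
I_{1,a}^{\nu} = \frac{1}{2}\int_{\mathcal{C}_\nu} \frac{w_\e(z_0+\bar z) + w_\e(z_0-\bar z) - 2w_\e(z_0)}{\rho^{d+2s}}\,d\bar z\,d\bar y,
\end{equation}
and pass to the limit $\nu\to 0^+$ by dominated convergence (the second-difference is $O(\bar z^{2})$ making the integrand locally integrable). Performing the $\bar y$-integration and using \eqref{eq:key-yintegral} together with the tail estimate yields
\begin{equation}
\lim_{\nu\to 0^+} I_{1,a}^{\nu} = \frac{\gamma_{1,s}}{2\gamma_{d,s}} \int_{-\sfrac{\delta}{\Lambda}}^{\sfrac{\delta}{\Lambda}} \frac{w_\e(z_0+\bar z) + w_\e(z_0-\bar z) - 2w_\e(z_0)}{|\bar z|^{1+2s}}\,d\bar z + O(\Lambda^{2s}),
\end{equation}
where the $O(\Lambda^{2s})$ absorbs the $r(\bar z)$ correction since $\|w_\e\|_{L^\infty}\le 1$. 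The remaining integral differs from $-\tfrac{1}{\gamma_{d,s}}(-\partial_{zz})^s w_\e(z_0)$ only by the tail on $\{|\bar z|\ge \sfrac{\delta}{\Lambda}\}$, which is again $O(\Lambda^{2s})$ by the uniform bound $|w_\e|\le 1$. Thus $\lim_{\nu\to 0^+} I_{1,a}^{\nu} = -\frac{1}{\gamma_{d,s}}(-\Delta)^s w_\e(z_0) + O(\Lambda^{2s})$.

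For $I_{1,b}^{\nu}$ the extra factor $\bar z$ makes the integrand locally $O(|\bar z|^{1-2s}\rho^{d-1})$, hence absolutely integrable, so no symmetrisation is needed: Fubini and \eqref{eq:key-yintegral} give
\begin{equation}
\lim_{\nu\to 0^+} I_{1,b}^{\nu} = \frac{\gamma_{1,s}}{\gamma_{d,s}} \int_{-\sfrac{\delta}{\Lambda}}^{\sfrac{\delta}{\Lambda}} \frac{[w_\e(z_0+\bar z) - w_\e(z_0)]\,\bar z}{|\bar z|^{1+2s}}\,d\bar z + O(\Lambda^{2s-1}),
\end{equation}
the error again coming from $r(\bar z)$ and the trivial bound $|[w_\e(z_0+\bar z)-w_\e(z_0)]\bar z|\le 2|\bar z|$. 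Here is the step where \cref{l:formula eta}, applied with $\ell = \sfrac{\delta}{\Lambda}$, is decisive: it converts the last displayed integral into
\begin{equation}
\frac{1}{2s-1}\int_{-\sfrac{\delta}{\Lambda}}^{\sfrac{\delta}{\Lambda}} \frac{w_\e'(z_0+\bar z)}{|\bar z|^{2s-1}}\,d\bar z + \frac{(\sfrac{\delta}{\Lambda})^{1-2s}}{1-2s}\,[w_\e(z_0+\sfrac{\delta}{\Lambda})-w_\e(z_0-\sfrac{\delta}{\Lambda})],
\end{equation}
and for $s\in(\sfrac{1}{2},1)$ the boundary term is bounded by $C\Lambda^{2s-1}$. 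Multiplying by $-H_{\Sigma}(x_0')$, which is uniformly bounded by the compactness of $\Sigma$ and its $C^3$-regularity, and summing with the $I_{1,a}^{\nu}$ contribution yields exactly the claim, all errors being at worst $O(\Lambda^{2s})$. The main subtlety, in my view, is tracking the interplay between the principal-value nature of $I_{1,a}^{\nu}$ (resolved by symmetrisation) and the cylindrical tail error $r(\bar z)$, whose apparently large bound $\Lambda^{1+2s}$ is compensated by the short length $\sfrac{\delta}{\Lambda}$ of the $\bar z$-interval combined with either $|w_\e|\le 1$ or $|\bar z|$-factors; this compensation is what produces the sharp final rate $\Lambda^{2s}$ rather than something worse.
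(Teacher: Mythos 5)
Your proposal is correct and follows essentially the same route as the paper: the same split into the symmetric and $\bar z$-weighted pieces, the same symmetrization to kill the principal value, the same invocation of \cref{l:formula eta} to convert the second piece, and the same tangential integral identity with the $\Lambda$-dependent tail estimate absorbed into $O(\Lambda^{2s})$. The only cosmetic difference is that the paper obtains the constant $\gamma_{1,s}/\gamma_{d,s}$ in the $\bar y$-integral through the explicit Beta-function computation of \cref{l: reduction of the kernel}, whereas you derive it from the consistency of the $d$-dimensional and one-dimensional fractional Laplacian kernels on functions of one variable, both of which are standard and valid.
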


\begin{proof} 
For simplicity, we denote by $\ell = \sfrac{\delta}{\Lambda}$. Using the second order difference to get rid of the principal value, we split 
\begin{align}
    \lim_{\nu \to 0} I_1^\nu & = \int_{B_\ell^{d-1}} \int_{-\ell}^\ell \frac{w_\e(z_0 + \bar{z}) + w_\e(z_0-\bar{z}) -2 w_\e(z_0) }{2 \rho^{d+2s}} \, d \bar{z} \, d \bar{y}  
    \\ & \qquad - H_{\Sigma}(x_0') \int_{B_\ell^{d-1}} \int_{-\ell}^\ell \frac{w_\e(z_0 + \bar{z}) - w_\e(z_0)}{\rho^{d+2s}} \bar{z} \, d \bar{z} \, d \bar{y} 
    \\ & = I_{1,1} + I_{1,2}.
\end{align}
\textsc{Estimate of $I_{1,1}$}. By \cref{l: reduction of the kernel} and recalling that $\abs{w_\e} \leq 1$, we have  
\begin{align}
    I_{1,1} & = \frac{\gamma_{1,s}}{\gamma_{d,s}} \int_{-\ell}^{\ell}  \frac{w_\e(z_0 +\bar{z}) + w_\e(z_0 - \bar{z}) - 2 w_\e(z_0)}{2 \abs{\bar{z}}^{1+2s}} \, d\bar{z} + O\left(\ell^{-2s}\right) = - \frac{1}{\gamma_{d,s}} (-\Delta)^s w_\e(z_0) + O\left(\ell^{-2s}\right).  
\end{align}

\textsc{Estimate of $I_{1,2}$}. Similarly, we have  
\begin{align}
    I_{1,2} & = -H_{\Sigma}(x_0') \int_{-\ell}^{\ell} \left( \frac{\gamma_{1,s}}{\gamma_{d,s}} \frac{w_\e(z_0 + \bar{z})- w_\e(z_0)}{\abs{\bar{z}}^{1+2s}} + O\left( \ell^{-1-2s} \right) \right) \bar{z} \, d \bar{z}  
    \\ & = - H_{\Sigma} (x_0') \frac{\gamma_{1,s}}{\gamma_{d,s}} \int_{-\ell}^{\ell}  \frac{w_\e(z_0 + \bar{z})- w_\e(z_0)}{\abs{\bar{z}}^{1+2s}} \bar{z} \, d \bar{z} + O\left( \ell^{1-2s} \right). 
\end{align}
Then, using \eqref{eq:formula eta 2} and recalling that $\abs{w_\e} \leq 1$, we conclude  
$$\abs{ I_{1,2} +  H_{\Sigma}(x_0') \frac{\gamma_{1,s}}{(2s-1) \gamma_{d,s}} \int_{-\ell}^\ell \frac{w_\e'(z_0+\bar{z})}{\abs{\bar{z}}^{2s-1}} \, d\bar{z}} \lesssim \ell^{1-2s}. $$
\end{proof}

\begin{lemma} \label{l: expansion of I_2}
Let $I_2^\nu$ be defined by \eqref{eq: expansion of inner contribution}. It holds  
\begin{equation}
    \sup_{x_0 \in \Sigma_{\sfrac{\delta}{10 \Lambda}}} \abs{ \lim_{\nu \to 0} I_2^\nu - \frac{H_{\Sigma}(x_0') }{2 (2s-1) } \cdot \frac{\gamma_{1,s}}{\gamma_{d,s}} \int_{-\sfrac{\delta}{\Lambda}}^{\sfrac{\delta}{\Lambda}} \frac{w'_\e(z_0+\bar{z})}{\abs{\bar{z}}^{2s-1}} \, d \bar{z} } \lesssim \Lambda^{2s-1}.   
\end{equation}
\end{lemma}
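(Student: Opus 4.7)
The plan is to isolate the leading quadratic contribution of $f(z_0,\bar y)$ in $I_2^\nu$, obtain the claimed one-dimensional expression via rotational symmetry and a $(d-1)$-to-$1$ kernel reduction, and bound all remaining pieces by $O(\Lambda^{2s-1})$ uniformly in $\e\in(0,1)$.

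\textbf{Leading term.} Writing $f(z_0,\bar y) = \tfrac{1}{2}\sum_{i=1}^{d-1} k_i \bar y_i^2 + r(z_0,\bar y)$ with $|r(z_0,\bar y)|\lesssim |z_0|\,|\bar y|^2 + |\bar y|^3$, and splitting $1-\bar z H_\Sigma(x_0') = 1 + (-\bar z H_\Sigma(x_0'))$, I would decompose $I_2^\nu = J_1^\nu + J_2^\nu + J_3^\nu$, where $J_1^\nu$ carries the quadratic part times $1$, $J_2^\nu$ the quadratic part times $-\bar z H_\Sigma(x_0')$, and $J_3^\nu$ the contributions from $r$. Since $|\bar y|^2/\rho^{d+2s}\lesssim \rho^{2-d-2s}$ is locally integrable, $\nu\to 0$ in $J_1^\nu$ is immediate by dominated convergence. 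Permutation and reflection symmetry of $B^{d-1}_{\delta/\Lambda}$ rewrites $\sum_i k_i \bar y_i^2$ as $H_\Sigma(x_0')|\bar y|^2/(d-1)$ inside the $\bar y$-integral. A direct Beta-function computation (parallel to the one-dimensional reduction of the kernel used for $I_{1,1}$) then yields
\[
\int_{\R^{d-1}} \frac{|\bar y|^2\,d\bar y}{(|\bar z|^2+|\bar y|^2)^{(d+2s)/2}} = \frac{d-1}{2s-1}\cdot\frac{\gamma_{1,s}}{\gamma_{d,s}}\cdot\frac{1}{|\bar z|^{2s-1}},
\]
and the tail $|\bar y|>\delta/\Lambda$ produces an error bounded uniformly in $|\bar z|\le \delta/\Lambda$ by $C\int_{\delta/\Lambda}^{\infty} r^{-2s}\,dr \lesssim (\delta/\Lambda)^{1-2s}$. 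Pairing against $\|w_\e'\|_{L^1}=2$, this error contributes $O(\Lambda^{2s-1})$, and combining the main part with the prefactor $H_\Sigma(x_0')/[2(d-1)]$ reproduces exactly the target expression.

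\textbf{Remaining pieces and main obstacle.} For $J_2^\nu$, the factor $|\bar z|\le \delta/\Lambda$ turns the effective singular kernel into $|\bar z|^{2-2s}\le (\delta/\Lambda)^{2-2s}$, so $|J_2^\nu|\lesssim \Lambda^{2s-2}\le \Lambda^{2s-1}$. The $|\bar y|^3$ component of $J_3^\nu$ is handled analogously, again gaining a $|\bar z|$ after symmetric reduction. The delicate piece is the $|z_0|\,|\bar y|^2$ remainder: the main difficulty is obtaining an $\e$-uniform bound, since a crude $L^\infty$-estimate of $w_\e'$ would give divergent $\e^{-1}$ factors. I would instead split the $\bar z$-integration at $|\bar z|=|z_0|/2$: on the inner region the sharp decay $|w_\e'(z_0+\bar z)|\lesssim \e^{2s}|z_0|^{-1-2s}$ (valid when $|z_0|\ge 2\e$; otherwise a direct estimate using $|w_\e'|\le C\e^{-1}$ suffices since the region has length $\lesssim \e$ and yields a term $\lesssim \e^{2-2s}\le 1$) controls the integrable singularity of $|\bar z|^{1-2s}$; on the outer region $|\bar z|^{1-2s}\le (|z_0|/2)^{1-2s}$ is pulled out against $\|w_\e'\|_{L^1}=2$. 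Both cases produce $O(|z_0|^{2-2s})=O(\Lambda^{2s-2})$ after multiplication by $|z_0|\le \delta/(10\Lambda)$, completing the proof.
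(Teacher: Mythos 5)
Your proposal is correct and follows essentially the same route as the paper's proof: decomposition into the leading quadratic term and lower-order remainders, reduction of the angular $\bar y$-integral to a one-dimensional kernel $|\bar z|^{1-2s}$ with a uniform $O(\Lambda^{2s-1})$ tail error, and a split at $|\bar z|=|z_0|/2$ to handle the $|z_0|\,|\bar y|^2$ remainder using the sharp decay of $w'$. The only cosmetic differences are that you further separate the $-\bar z H_\Sigma(x_0')$ factor into its own term $J_2^\nu$ and treat the $\e$-uniformity of the inner region via a case distinction $|z_0|\gtrless 2\e$, whereas the paper absorbs both in one stroke by observing that $t\mapsto t/(1+t^{1+2s})$ is bounded on $\R$.
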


\begin{proof}
For simplicity, we denote by $\ell = \sfrac{\delta}{\Lambda}$. Then, we split 
\begin{align}
    \lim_{\nu \to 0} I_2^\nu & = \int_{B_\ell^{d-1}} \int_{-\ell}^\ell \frac{w_\e'(z_0+ \bar{z}) \sum_{i=1}^{d-1} k_i  \bar{y}_i^2 }{2 \rho^{d+2s}} (1- \bar{z} H_{\Sigma}(x_0')) \, d \bar{z}\, d \bar{y} 
    \\ & \quad + O \left( \abs{z_0} \int_{B_\ell^{d-1}} \int_{-\ell}^\ell \frac{w'_\e(z_0 + \bar{z})}{\rho^{d+2s}} \abs{\bar{y}}^2 \, d \bar{z} \, d \bar{y} \right) + O \left( \int_{B_\ell^{d-1}} \int_{-\ell}^\ell \frac{w'_\e(z_0 + \bar{z})}{\rho^{d+2s}} \abs{\bar{y}}^3 \, d \bar{z} \, d \bar{y} \right)
    \\ & = I_{2,1} + O\left(I_{2,2}\right) + O\left(I_{2,3}\right) 
\end{align}
and we estimate separately each term. 

\textsc{Estimate of $I_{2,1}$}. By \cref{l: reduction of the kernel} and the fundamental theorem of calculus, we have that  
\begin{align}
    I_{2,1} & = \frac{H_{\Sigma}(x_0')}{2(2s-1)} \int_{-\ell}^\ell w_\e'(z_0 + \bar{z})(1- \bar{z} H_{\Sigma}(x_0')) \left( \frac{\gamma_{1,s}}{\gamma_{d,s}} \cdot \frac{1}{\abs{\bar{z}}^{2s-1}} + O\left(\ell^{1-2s}\right) \right) \, d\bar{z}  
    \\ & = \frac{H_{\Sigma}(x_0') }{2 (2s-1) }\cdot \frac{\gamma_{1,s}}{\gamma_{d,s}} \int_{-\ell}^\ell \frac{w'_\e(z_0+\bar{z})}{\abs{\bar{z}}^{2s-1}} \, d \bar{z} + O\left(\ell^{1-2s}\right).
\end{align}

\textsc{Estimate of $I_{2,2}$}. By \cref{l: reduction of the kernel} we compute 
\begin{align}
    I_{2,2} & = \abs{z_0} \int_{-\ell}^{\ell} w_\e'(z_0 + \bar{z}) \int_{B_\ell^{d-1}} \frac{\abs{\bar{y}}^2}{ (\abs{\bar{z}}^2 + \abs{\bar{y}}^2)^{\frac{d+2s}{2}}}\, d \bar{y} \, d \bar{z} 
    \\ & \lesssim \abs{z_0} \int_{-\ell}^{\ell} \frac{ w_\e'(z_0 + \bar{z}) }{\abs{\bar{z}}^{2s-1}} \, d\bar{z} + \abs{z_0} \ell^{1-2s} \int_{-\ell}^{\ell} w_\e'(z_0 + \bar{z})\, d \bar{z} = A + B.
\end{align}
By the fundamental theorem of calculus, we compute 
$$B \lesssim \abs{z_0} \ell^{1-2s} \int_{\R} w_\e'(z_0 + \bar{z})\, d \bar{z} \lesssim \ell^{2-2s}. $$
To estimate $A$, we split 
\begin{equation}
    A = \abs{z_0} \left( \int_{\abs{\bar{z}} \leq \sfrac{\abs{z_0}}{2}} + \int_{\sfrac{\abs{z_0}}{2} \leq \abs{\bar{z}} \leq \ell } \right) \frac{w_\e'(z_0+ \bar{z})}{\abs{\bar{z}}^{2s-1}}\, d\bar{z} = A_1 + A_2. 
\end{equation}
Then, by \eqref{eq: decay w'} we have 
\begin{align}
    A_1 & \lesssim \abs{z_0} \sup_{\abs{\zeta} \geq \sfrac{\abs{z_0}}{2} } w_\e'(\zeta) \int_{\abs{\bar{z}} \leq \sfrac{\abs{z_0}}{2}} \frac{1}{\abs{\bar{z}}^{2s-1}} \, d \bar{z}  \lesssim \frac{\abs{z_0}}{\e} \frac{1}{1+ \abs{z_0}^{1+2s} \e^{-1-2s}} \abs{z_0}^{2-2s} \lesssim \ell^{2-2s},
\end{align}
since the function $t \mapsto \frac{t}{1+ \abs{t}^{1+2s}}$ is bounded in $\R$. To estimate $A_2$, we have 
\begin{equation}
    A_2 \lesssim \abs{z_0}^{2-2s} \int_{-\ell}^{\ell} w_\e'(z_0 + \bar{z})\, d \bar{z} \lesssim \ell^{2-2s}. 
\end{equation}

\textsc{Estimate of $I_{2,3}$}. Since $d+2s-3 < d-1$, we estimate 
\begin{align}
    \abs{I_{2,3}} & \lesssim \int_{-\ell}^\ell w_\e'(z_0 + \bar{z}) \int_{B_\ell^{d-1}} \frac{1}{\abs{\bar{y}}^{d+2s-3}} \, d \bar{y} \lesssim \ell^{2-2s} \int_{-\ell}^{\ell} w_\e'(z_0 + \bar{z}) \, d \bar{z} \lesssim \ell^{2-2s}. 
\end{align}

\end{proof}

\begin{lemma} \label{l: expansion of I_3}
Let $I_3^\nu$ be defined by \eqref{eq: expansion of inner contribution}. It holds that 
\begin{equation}
    \sup_{x_0 \in \Sigma_{\sfrac{\delta}{10 \Lambda}} } \limsup_{\nu \to 0} \abs{I_3^\nu} \lesssim \e^{2s} \Lambda^{4s-2} + \Lambda^{2s-2}. 
\end{equation}
\end{lemma}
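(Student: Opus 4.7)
The plan is to apply Taylor's theorem with integral remainder to reduce the integrand of $I_3^\nu$ to a manageable form. Setting $a := z_0 + \bar z$ and $f := f(z_0, \bar y)$, with $\abs{f} \lesssim \abs{\bar y}^2$ by~\eqref{eq: formula f}, we have
\begin{equation*}
    w_\e(a+f) - w_\e(a) - w_\e'(a) f = f^2 \int_0^1 (1-\tau)\, w_\e''(a+\tau f)\, d\tau.
\end{equation*}
The decisive ingredient is the sharp decay estimate $\abs{w_\e''(\sigma)} \lesssim \e^{2s}/(\e^{2+2s} + \abs{\sigma}^{2+2s})$, which follows from \cref{thm:hi-decay} with $k=2$ after rescaling $w_\e(z) = w(z/\e)$. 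Together with $\abs{1- \bar z H_\Sigma(x_0')} \lesssim 1$, this yields the pointwise bound
\begin{equation*}
    \abs{I_3^\nu} \lesssim \int_{B_\ell^{d-1}} \int_{-\ell}^\ell \frac{\abs{\bar y}^4}{\rho^{d+2s}} \sup_{\xi \in [a,a+f]} \abs{w_\e''(\xi)}\, d\bar z\, d\bar y,
\end{equation*}
where $\ell = \delta/\Lambda$.

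Next, I would split the $(\bar y, \bar z)$-domain into two zones, determined by the size of $\abs{z_0+\bar z}$ relative to $\max(C\abs{\bar y}^2, \e)$, which are the two natural scales of the problem. On the \emph{inner} zone I would use the trivial bound $\sup \abs{w_\e''} \lesssim \e^{-2}$: the $\bar z$-interval has length $\lesssim \e + \abs{\bar y}^2$ and $\rho^{-(d+2s)} \lesssim \abs{\bar y}^{-(d+2s)}$, so polar integration in the $(d-1)$-dimensional tangential variable $\bar y$ produces the contribution $\e^{2s}\Lambda^{4s-2}$. On the \emph{outer} zone I would instead use the decay $\abs{w_\e''(\sigma)} \lesssim \e^{2s}/\abs{\sigma}^{2+2s}$ valid for $\abs{\sigma} \gtrsim \e$; then, after performing the $\bar z$-integration first (which, thanks to the extra decay factor, is now integrable against $1/\rho^{d+2s}$), the remaining integral in $\bar y$ converges to yield the contribution $\Lambda^{2s-2}$.

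The subleading terms $O(\abs{z_0} \abs{\bar y}^2)$ and $O(\abs{\bar y}^3)$ in the expansion of $f$ from~\eqref{eq: formula f} come with extra factors of $\abs{\bar y}$ or $\abs{z_0} \lesssim \sfrac{\delta}{10\Lambda}$ and therefore generate strictly lower-order contributions, as does the prefactor $\bar z H_\Sigma(x_0')$. The hard part will be the precise balancing of the two scales $\e$ and $\ell$: the decay of $w_\e''$ operates at scale $\e$ while the kernel singularity operates at scale $\ell$, and each of the two terms in the claimed bound corresponds to one regime dominating the effective size of $w_\e''$. The exponent $4s-2 > 0$ (for $s \in (\sfrac{1}{2}, 1)$) reflects the fact that the estimate degrades as the tubular neighbourhood shrinks ($\Lambda \to \infty$), while the complementary term $\Lambda^{2s-2}$ decays with $\Lambda$, so that the two terms together provide the correct interpolation required for the proof of \cref{t:fractional laplacian}.
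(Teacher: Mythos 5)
There is a genuine gap in your proposed argument: taking absolute values inside the integral and relying on the pointwise decay of $w_\e''$ is too crude to reach the claimed bound. The obstruction is that $\int_\R \abs{w_\e''}\,dz \sim \e^{-1}$, and when $z_0 = 0$ (which is allowed, since $x_0$ ranges over all of $\Sigma_{\sfrac{\delta}{10\Lambda}}$) this mass concentrates precisely at the kernel singularity. A scaling computation (set $\bar{y} = \e u$, $\bar{z} = \e v$) makes this explicit: with $z_0 = 0$ and $t = 0$ one gets
\begin{equation}
\int_{B_\ell^{d-1}}\int_{-\ell}^{\ell}\frac{\abs{\bar{y}}^4\,\abs{w_\e''(\bar{z})}}{\rho^{d+2s}}\,d\bar{z}\,d\bar{y}
= \e^{2-2s}\int_{\abs{u}<\sfrac{\ell}{\e}}\int_{\abs{v}<\sfrac{\ell}{\e}}\frac{\abs{u}^4\,\abs{w''(v)}}{(\abs{u}^2+v^2)^{\frac{d+2s}{2}}}\,dv\,du
\sim \e^{-1}\ell^{3-2s},
\end{equation}
which diverges as $\e\to 0^+$ for fixed $\ell = \sfrac{\delta}{\Lambda}$, far exceeding $\e^{2s}\Lambda^{4s-2} + \Lambda^{2s-2}$. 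Your inner/outer split does not rescue this: in the transition regime $\e\lesssim\abs{\bar{y}}\lesssim\sqrt{\e}$, the $\bar{z}$-interval of the inner zone has length $\lesssim\e$, $\sup\abs{w_\e''}\lesssim\e^{-2}$, and $\rho^{-(d+2s)}\lesssim\abs{\bar{y}}^{-(d+2s)}$ combine to give a contribution $\sim\e^{-1}\int_\e^{\sqrt{\e}} r^{2-2s}\,dr \sim \e^{(1-2s)/2}$, which is a negative power of $\e$ for $s\in\left(\sfrac{1}{2},1\right)$.

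The paper instead integrates by parts twice in $\bar{z}$ \emph{before} estimating, which is not merely a computational convenience but exploits the cancellation hidden in $\int_\R w_\e'' = 0$. After integration by parts, the boundary terms are evaluated at $\bar{z}=\pm\ell$, where $\abs{z_0+\ell+tf}\geq\sfrac{\ell}{2}$, so one only ever sees $w_\e'$ evaluated far from its peak, of size $\e^{2s}\ell^{-1-2s}$; this yields the $\e^{2s}\Lambda^{4s-2}$ term. The bulk term involves only $\abs{w_\e}\leq 1$ against $\partial^2_{\bar{z}}\rho^{-(d+2s)}$, which is integrable on $B_\ell^{d-1}\times(-\ell,\ell)$ and yields $\Lambda^{2s-2}$. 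In particular, the paper's proof of this lemma does not use \cref{thm:hi-decay} with $k=2$ at all — only the first-order decay~\eqref{eq: decay w'} from \cref{t:optimal profile} is needed — so your ``decisive ingredient'' is neither sufficient (as above) nor necessary.
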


\begin{proof}
For simplicity, we denote by $\ell = \sfrac{\delta}{\Lambda}$. By Taylor's expansion, we have 
\begin{align}
    \lim_{\nu \to 0} I_3^\nu  = \int_0^1 (1-t) \int_{B_{\ell}^{d-1}} \int_{-\ell}^{\ell} \frac{w_\e''(z_0 + \bar{z} + t f(z_0, \bar{y}) ) f(z_0, \bar{y})^2 }{\rho^{d+2s}} (1- \bar{z} H_{\Sigma}(x_0'))\, d\bar{z}\, d \bar{y} 
\end{align}
Hence, integrating by parts twice, we get that 
\begin{align}
    \lim_{\nu \to 0} I_3^\nu & = \int_0^1 \int_{B_{\ell}^{d-1}} \bigg[  \left[ w'_\e(z_0 + \bar{z} + t f(\bar{y}, z_0)) \frac{1- \bar{z} H_{\Sigma}(x_0')}{\rho^{d+2s}} \right]_{\bar{z} = -\ell}^{\bar{z}= \ell} 
    \\ & \qquad - \left[ w_{\e}(z_0+ \bar{z} + t f(\bar{y}, z_0) ) \frac{d}{d \bar{z}} \left( \frac{1- \bar{z} H_{\Sigma}(x_0')}{\rho^{d+2s}} \right) \right]_{\bar{z}=-\ell}^{\bar{z} = \ell}
    \\ & \qquad + \int_{-\ell}^{\ell} w_\e(z_0 + \bar{z} + t f(\bar{y}, z_0)) \frac{d^2}{d \bar{z}^2} \left( \frac{1- \bar{z} H_{\Sigma}(x_0')}{\rho^{d+2s}} \right)  \, d \bar{z} \bigg] f(\bar{y}, z_0)^2 \, d \bar{y} \, dt
    \\ & = I_{3,1} + I_{3,2} + I_{3,3}. 
\end{align}

\textsc{Estimate of $I_{3,1}$}. To estimate $I_{3,1}$, by \eqref{eq: decay w'}, we have 
\begin{align}
    w'_\e\left(z_0 + \ell + t f(\bar{y}, z_0)\right) & \lesssim  \frac{ \e^{-1}}{1+ \e^{-1-2s} \abs{z_0 + \ell + t f(\bar{y}, z_0)}^{1+2s} } \lesssim \frac{ \e^{2s} }{\abs{z_0 + \ell + t f(\bar{y}, z_0)}^{1+2s}}. 
\end{align}
Moreover, by \eqref{eq: formula f}, it results that $\abs{f(\bar{y}, z_0)} \leq C \ell^2$ for any $\bar{y} \in B_{\ell}^{d-1}$. Here $C>0$ is a purely geometric constant. Recall that $\abs{z_0} \leq \sfrac{\ell}{10}$. Hence, we have that 
\begin{equation}
    \abs{z_0 + \ell + t f(\bar{y}, z_0)} \geq \ell - \frac{\ell}{10} - C \ell^2  \geq \frac{\ell}{2},
\end{equation}
provided that we take $\Lambda \geq \Lambda_0$, where $\Lambda_0$ in \cref{l:inner ball} is chosen large enough, depending on $\Sigma$. Thus, since $f(\bar{y}, z_0)^2 = O (\abs{\bar{y}}^4)$ (see \eqref{eq: formula f}), we estimate 
\begin{align}
    \int_0^1 \int_{B_{\ell}^{d-1}} & \left[ w'_\e(z_0 + \ell + t f(\bar{y}, z_0)) \frac{1- \ell H_{\Sigma}(x_0')}{( \ell^2 + \abs{\bar{y}}^2 )^{\frac{d+2s}{2}}} \right] f(\bar{y}, z_0)^2 \, d \bar{y} \, dt  \lesssim  \e^{2s} \ell^{2-4s}. 
\end{align}
Similarly, we estimate the evaluation in $-\ell$, thus proving that $\abs{I_{3,1}} \lesssim \e^{2s} \ell^{2-4s}$.  

\textsc{Estimate of $I_{3,2}$}. By direct computation, for any $(\bar{y}, \bar{z}) \in B_{\ell}^{d-1} \times \left(-\ell,\ell\right)$ we have 
\begin{equation}
    \frac{d}{d \bar{z}} \left( \frac{1- \bar{z} H_{\Sigma}(x_0')}{\rho^{d+2s}} \right) = -\frac{H_{\Sigma} (x_0') }{\rho^{d+2s}} - (d+2s) \frac{\bar{z} - \bar{z}^2 H_{\Sigma}(x_0')}{\rho^{d+2s+2}}, \label{eq:est first derivative}
\end{equation}
\begin{equation} 
    \abs{\frac{d^2}{d \bar{z}^2} \left( \frac{1- \bar{z} H_{\Sigma}(x_0')}{\rho^{d+2s}} \right) } \lesssim \frac{1}{\rho^{d+2s+2}}.  \label{eq:est second derivative} 
\end{equation}
Since $ \abs{ w_\e} \leq 1$, by \eqref{eq:est first derivative} and $f(\bar{y}, z_0)^2 = O (\abs{\bar{y}}^4)$ (see \eqref{eq: formula f}), we estimate 
\begin{align}
    \abs{I_{3,2}} \lesssim \int_{B_\ell^{d-1}} \frac{\abs{\bar{y}}^4}{ (\ell^2 + \abs{\bar{y}}^2)^{\frac{d+2s}{2}} } \, d \bar{y} \lesssim \ell^{3-2s}
\end{align}

\textsc{Estimate of $I_{3,3}$}. Using \eqref{eq:est second derivative}, \cref{l: reduction of the kernel} and recalling that $f(\bar{y}, z_0)^2 = O (\abs{\bar{y}}^4)$ (see \eqref{eq: formula f}), $s \in \left(\sfrac{1}{2},1\right)$, it is readily checked that 
\begin{align}
    \abs{I_{3,3}} & \lesssim  \int_{B_{\ell}^{d-1}} \int_{-\ell}^{\ell}\frac{\abs{\bar{y}}^4}{\rho^{d+2s+2}} \, d\bar{z} \, d\bar{y} \lesssim \int_{-\ell}^{\ell} \left( \abs{\bar{z}}^{1-2s} + O\left(\ell^{1-2s} \right) \right) \, d\bar{z} \lesssim \ell^{2-2s}.
\end{align}
\end{proof}

\begin{lemma} \label{l: expansion of I_4}
Let $I_4^\nu$ be defined by \eqref{eq: expansion of inner contribution}. It holds that 
\begin{equation}
    \sup_{x_0 \in \Sigma_{\sfrac{\delta}{10 \Lambda}}} \limsup_{\nu \to 0} \abs{I_4^\nu} \lesssim \Lambda^{2s-2}. 
\end{equation}
\end{lemma}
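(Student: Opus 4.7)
Write $\ell = \sfrac{\delta}{\Lambda}$ and split $I_4^\nu = I_{4,a}^\nu + I_{4,b}^\nu$ according to the two summands of $(\abs{z_0}\abs{\bar y} + \rho^2)$. For the $\rho^2$ part, the kernel reduces to $\rho^{2-d-2s}$; since $\mathcal{C}_\nu \subset B_\ell^{d-1} \times (-\ell, \ell)$ is contained in the Euclidean ball of radius $2\ell$ in $\R^d$, the plan is to use the crude bound $\abs{w_\e(z_0+\bar z+f)-w_\e(z_0)}\leq 2$ and pass to polar coordinates to get
\begin{equation}
I_{4,b}^\nu \lesssim \int_0^{2\ell} r^{1-2s}\, dr \lesssim \ell^{2-2s},
\end{equation}
which is exactly the target bound $\Lambda^{2s-2}$.

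The term $I_{4,a}^\nu$ is more delicate. A naive $L^\infty$ bound on $w_\e$ leaves the divergent integral $\abs{z_0}\int_0^{2\ell} r^{-2s}\, dr$ for $s>\sfrac{1}{2}$, while the global Lipschitz bound $\norm{w_\e'}_\infty \leq C/\e$ introduces a non-uniform factor $\e^{-1}$. The strategy is to interpolate the two bounds via a case analysis on $\abs{z_0}$ versus $\e$, each case further split into an inner region (where the smoothness of $w_\e$ is exploited) and an outer region (where the $L^\infty$ bound suffices).

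If $\abs{z_0}\leq\e$: on $\{\rho\leq\e\}$ use $\abs{w_\e(z_0+\bar z+f)-w_\e(z_0)}\leq (C/\e)(\abs{\bar z}+\abs{\bar y}^2)$ together with $\abs{\bar z}\abs{\bar y}\leq\rho^2/2$ and $\abs{\bar y}^3\leq\rho^3$ to obtain integrable kernels and a contribution $\lesssim\abs{z_0}\e^{1-2s}$; on $\{\rho>\e\}$ use $\abs{w_\e}\leq 2$ and $\abs{\bar y}\leq\rho$ to get the same order $\abs{z_0}\int_\e^{2\ell}r^{-2s}\,dr\lesssim\abs{z_0}\e^{1-2s}$. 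This absorbs into $\ell^{2-2s}$ by using either $\abs{z_0}\leq\e$ (when $\e\leq\ell$, giving $\e^{2-2s}\leq\ell^{2-2s}$) or $\abs{z_0}\leq\ell/10$ (when $\e>\ell$, giving $\ell\cdot\e^{1-2s}\leq\ell^{2-2s}$ since $1-2s<0$). If $\abs{z_0}>\e$ (whence $\ell>10\e$): invoke the decay estimate \eqref{eq: decay w'} to get $w_\e'(t)\leq C\e^{2s}/\abs{t}^{1+2s}$ for $\abs{t}\geq\abs{z_0}/2$; on $\{\rho\leq\abs{z_0}/2\}$ the argument $z_0+\bar z+f$ stays in the interval $[z_0/2,3z_0/2]$, giving $\abs{w_\e(\cdot)-w_\e(z_0)}\leq C\e^{2s}\abs{z_0}^{-(1+2s)}(\abs{\bar z}+\abs{\bar y}^2)$ and a contribution $\lesssim(\e/\abs{z_0})^{2s}\abs{z_0}^{2-2s}\leq\abs{z_0}^{2-2s}$; on $\{\rho>\abs{z_0}/2\}$ the $L^\infty$ bound and $\abs{\bar y}\leq\rho$ yield $\abs{z_0}\int_{\abs{z_0}/2}^{2\ell}r^{-2s}\,dr\lesssim\abs{z_0}^{2-2s}$ for $s>\sfrac{1}{2}$. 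Since $\abs{z_0}\leq\ell/10$, each contribution is $\lesssim\ell^{2-2s}$.

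The hard part will be organizing the case analysis so that in every regime the crude divergence $r^{-2s}$ is killed by an appropriate power of $\e$, $\abs{z_0}$, or $\ell$; the guiding principle is that the effective Lipschitz scale of $w_\e$ near $z_0$ is $\max(\e,\abs{z_0})$, which is precisely the scale at which the singular integral should be truncated.
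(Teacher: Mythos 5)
Your proof is correct but takes a genuinely different route. The paper handles the $\abs{z_0}\abs{\bar y}$ part by first writing the telescoping decomposition $w_\e(z_0+\bar z+f)-w_\e(z_0)=[w_\e(z_0+\bar z+f)-w_\e(z_0+\bar z)]+[w_\e(z_0+\bar z)-w_\e(z_0)]$, bounding $\abs{\bar y}\le\rho$, and then killing the remaining divergence in two different ways: by an integration by parts in $\bar z$ for the piece where $f$ appears (using $\abs{f}\lesssim\abs{\bar y}^2\le\rho^2$ to soften the kernel first), and by the fundamental theorem of calculus plus a rescaling change of variables $\bar y\mapsto\abs{\bar z}\bar y$, $\bar z\mapsto t\bar z$ for the other piece, which reduces to the same bound used in the proof of the analogous Lemma for $I_2^\nu$. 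Your approach sidesteps the telescoping decomposition and the integration by parts entirely: you split at the scale $\max(\e,\abs{z_0})$, which you correctly identify as the effective Lipschitz scale of $w_\e$ near $z_0$, using the crude $L^\infty$ bound $\norm{w_\e'}_\infty\lesssim\e^{-1}$ in the inner region when $\abs{z_0}\le\e$, the decay \eqref{eq: decay w'} in the inner region when $\abs{z_0}>\e$, and the trivial $\abs{w_\e}\le1$ bound in the outer region; in every regime the gain of one power of $\abs{z_0}$ (or $\e$, with $\abs{z_0}\le\e$) against a $\int r^{-2s}\,dr$ that blows up only at the truncation scale gives exactly $\ell^{2-2s}$. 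Both arguments rest on the same underlying fact (the decay of $w'_\e$ supplies the missing power of $\rho$), and your version is arguably more elementary since it avoids boundary terms and rescalings; the cost is a more elaborate case analysis. One minor point: your inner-region claim that $z_0+\bar z+f$ stays in $[z_0/2,3z_0/2]$ should be weakened to something like $\abs{z_0+\bar z+f}\ge\abs{z_0}/4$ (after choosing $\Lambda_0$ large, so $\abs{z_0}\le\delta/(10\Lambda_0)$ is small enough that $C\abs{z_0}^2\le\abs{z_0}/4$), but this only changes the absolute constant.
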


\begin{proof}
For simplicity, we denote by $\ell = \sfrac{\delta}{\Lambda}$. Then, we split 
\begin{align}
    I_4 & \leq \abs{z_0}  \int_{B_\ell^{d-1}} \int_{-\ell}^\ell  \frac{\abs{w_\e(z_0 + \bar{z} + f(\bar{y}, z_0)) - w_\e(z_0+ \bar{z}) }}{\rho^{d+2s-1}}  \, d \bar{z} \, d \bar{y} 
    \\ & \qquad + \abs{z_0}  \int_{B_\ell^{d-1}} \int_{-\ell}^\ell  \frac{\abs{w_\e(z_0 + \bar{z}) - w_\e(z_0) }}{\rho^{d+2s-1}}  \, d \bar{z} \, d \bar{y} 
    \\ &  \qquad + \int_{B_\ell^{d-1}} \int_{-\ell}^\ell  \frac{\abs{w_\e(z_0 + \bar{z} + f(\bar{y}, z_0)) - w_\e(z_0) }}{\rho^{d+2s-2}}  \, d \bar{z} \, d \bar{y}
    \\ & = I_{4,1}+ I_{4,2}+ I_{4,3}. 
\end{align}

\textsc{Estimate of $I_{4,1}$}. Since $f(\bar{y}, z_0) = O(\abs{\bar{y}}^2)$ (see \eqref{eq: formula f}) and $\abs{z_0} \leq \sfrac{\ell}{10}$, by the fundamental theorem of calculus and integrating by parts, we have that 
\begin{align}
    & I_{4,1} \leq \ell \int_0^1 \int_{B_\ell^{d-1}} \int_{-\ell}^\ell \frac{w'_\e(z_0 + \bar{z} + t f(\bar{y}, z_0))}{\rho^{d+2s-3}} \, d \bar{z} \, d \bar{y} \, dt  
    \\ & = \ell \int_0^1 \int_{B_\ell^{d-1}} \left( \left[ \frac{w_\e(z_0+ \bar{z} + t f(\bar{y}, z_0))}{\rho^{d+2s-3}} \right]_{\bar{z}= -\ell}^{\bar{z}= \ell} + (d+2s-3) \int_{-\ell}^\ell \frac{w_\e(z_0+ \bar{z} + t f(\bar{y}, z_0)) \bar{z}}{\rho^{d+2s-1}} \, d \bar{z} \right) \, d \bar{y} \, dt
    \\ & \lesssim  \ell^{3-2s} + \int_{B_\ell^{d-1}} \int_{-\ell}^\ell \frac{1}{\rho^{d+2s-2}} \, d \bar{z} \, d \bar{y} \lesssim \ell^{2-2s}. 
\end{align}

\textsc{Estimate of $I_{4,2}$}. The estimate of $I_{4,2}$ is similar to that of $I_{2,2}$ in \cref{l: expansion of I_2}. By the fundamental theorem of calculus and recalling that $d+2s-1 > d-1$, we compute 
\begin{align}
    \abs{I_{4,2}} & \leq \abs{z_0} \int_0^1 \int_{-\ell}^\ell \abs{\bar{z}} w_\e'(z_0 + t \bar{z}) \int_{B_\ell^{d-1}} \frac{1}{(\abs{\bar{z}}^2 +\abs{\bar{y}}^2)^{\frac{d+2s-1}{2}}} \, d \bar{y} \, d \bar{z} \, dt 
    \\ & = \abs{z_0} \int_0^1 \int_{-\ell}^\ell \abs{\bar{z}}^{1-2s} w_\e'(z_0 + t \bar{z}) \int_{B_{\sfrac{\ell}{\abs{\bar{z}}}}^{d-1}} \frac{1}{(1 +\abs{\bar{y}}^2)^{\frac{d+2s-1}{2}}} \, d \bar{y} \, d \bar{z} \, dt 
    \\ & \leq \abs{z_0} \int_0^1 \frac{1}{t^{2-2s}} \int_{-\ell t}^{\ell t} \frac{w_\e'(\bar{z} + z_0) }{\abs{\bar{z}}^{2s-1}}\, d \bar{z} \, dt. 
\end{align}
Since $2-2s <1 $, we have 
\begin{equation}
    \abs{I_{4,2}} \lesssim \abs{z_0} \int_{-\ell }^{\ell } \frac{w_\e'(\bar{z} + z_0) }{\abs{\bar{z}}^{2s-1}}\, d \bar{z} \lesssim \ell^{2-2s} 
\end{equation}
as for the term $A$ in the estimate of $I_{2,2}$ in \cref{l: expansion of I_2}.

\textsc{Estimate of $I_{4,3}$ }.  Since $\abs{w_\e} \leq 1$ and $d+2s-2 < d$, we have that 
\begin{align}
    I_{4,3} & \lesssim \int_{B_\ell^{d-1}} \int_{-\ell}^\ell \frac{1}{\rho^{d+2s-2}} \, d \bar{z} \, d \bar{y}  \lesssim \ell^{2-2s}.
\end{align}
\end{proof}

\section{On the finiteness of a constant} \label{s: finiteness of constants}

In this section, we show that the constant $\kappa_\star$ in \eqref{eq:main} is finite. Motivated by \cref{t:fractional laplacian}, we introduce the following notation.  

\begin{definition} \label{d: eta_e,kappa}
Fix $s\in (\sfrac{1}{2},1)$ and let $w$ be the optimal profile of \cref{t:optimal profile}. For $\ell, \e>0$, set $w_\e(z) = w \left( \sfrac{z}{\e}\right)$ and define
\begin{equation} \label{eq: eta_e,kappa}
    \eta_{\e,\ell}(z_0) = \int_{-\ell}^\ell \frac{w_\e'(z_0+z)}{\abs{z}^{2s-1}} \, dz. 
\end{equation} 
\end{definition}

\begin{remark} \label{l:prop of eta} 
Let $\eta_{\e,\ell}$ be given by \eqref{eq: eta_e,kappa}. Since $w$ is strictly increasing and $w'$ is even and globally integrable, then $\eta_{\e, \ell}$ is nonnegative, even and globally bounded. Moreover, by the change of variable $z= \sfrac{\bar{z}}{\e}$ it holds 
\begin{equation}
    \eta_{\e,\ell}(z_0) = \e^{1-2s} \eta_{1,\sfrac{\ell}{\e}}\left( \sfrac{z_0 }{\e} \right). \label{eq: eta scaling}
\end{equation}
\end{remark}

Finally, we show the following result. 

\begin{proposition} \label{l: eta in L^2}
Fix $\ell, \ell'>0$. Let $\eta_{\e,\ell}$ be defined by \eqref{eq: eta_e,kappa}. For any $s\in (\sfrac{3}{4},1)$, it holds 
\begin{equation}
    \lim_{\e\to 0^+} \int_{-\sfrac{\ell'}{\e}}^{\sfrac{\ell'}{\e}} \eta_{1, \sfrac{\ell}{\e}}(z)^2 \, dz = : \mu_w \in (0,\infty). \label{eq: limit constant 1}
\end{equation} 
The constant $\mu_w$ depends only on $w$ and it is independent of $\ell, \ell'$. For $s = \sfrac{3}{4}$, it holds 
\begin{equation}
    \lim_{\e\to 0^+} \frac{1}{\abs{\log(\e)}} \int_{-\sfrac{\ell'}{\e}}^{\sfrac{\ell'}{\e}} \eta_{1, \sfrac{\ell}{\e}}(z)^2 \, dz = 8. \label{eq: limit constant 2}
\end{equation} 
\end{proposition}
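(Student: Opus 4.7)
Both cases reduce to a precise pointwise asymptotic for $\eta_{1,\infty}(z_0) := \int_{\R} w'(z_0+z)\abs{z}^{1-2s}\,dz = \int_{\R} w'(y)\abs{y-z_0}^{1-2s}\,dy$ as $\abs{z_0}\to\infty$: namely,
\begin{equation*}
    \eta_{1,\infty}(z_0) = 2\abs{z_0}^{1-2s} + O\!\bigl(\abs{z_0}^{-2s}\bigr) + O\!\bigl(\abs{z_0}^{1-4s}\bigr).
\end{equation*}
Splitting the $y$-integral at $\abs{y}=\abs{z_0}/2$: on $\abs{y}\le \abs{z_0}/2$ I write $\abs{y-z_0}^{1-2s}=\abs{z_0}^{1-2s}(1-y/z_0)^{1-2s}$, note that the second factor is bounded by $2^{2s-1}$ and tends to $1$ pointwise, and apply dominated convergence together with $\int_{\R} w'=w(+\infty)-w(-\infty)=2$ and the tail bound $\int_{\abs{y}>R} w'\lesssim R^{-2s}$ (from~\eqref{eq: decay w'}) to get the main term $2\abs{z_0}^{1-2s}+O(\abs{z_0}^{-2s})$; on $\abs{y}\ge \abs{z_0}/2$ the bound $w'(y)\lesssim \abs{y}^{-1-2s}$, after a further split at $\abs{y}=2\abs{z_0}$, yields $O(\abs{z_0}^{1-4s})$. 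The function $\eta_{1,\infty}$ is bounded on compact sets, and the maximum principle applied to~\eqref{eq: linearized fractional AC} gives $w'>0$, hence $\eta_{1,\infty}>0$ everywhere.

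\textbf{Case $s\in(\sfrac{3}{4},1)$.} The asymptotic shows $\eta_{1,\infty}\in L^2(\R)$ (since $2(2s-1)>1$), so $\mu_w := \norm{\eta_{1,\infty}}_{L^2(\R)}^2 \in (0,\infty)$. Because the integrand defining $\eta_{1,L}(z_0)$ is non-negative, both $\e\mapsto\eta_{1,\ell/\e}(z_0)^2$ and $\e\mapsto\mathds{1}_{(-\ell'/\e,\ell'/\e)}(z_0)$ are monotone non-decreasing as $\e\downarrow 0$, and $\eta_{1,\ell/\e}(z_0)\uparrow \eta_{1,\infty}(z_0)$ pointwise. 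The monotone convergence theorem yields
\begin{equation*}
    \int_{-\ell'/\e}^{\ell'/\e} \eta_{1,\ell/\e}(z_0)^2\,dz_0 \;\xrightarrow[\e\to 0^+]{}\; \int_\R \eta_{1,\infty}(z_0)^2\,dz_0 = \mu_w,
\end{equation*}
independently of $\ell,\ell'$.

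\textbf{Case $s=\sfrac{3}{4}$.} Now $\eta_{1,\infty}^2\sim 4/\abs{z_0}$ is not integrable at infinity. Set $L=\ell/\e$, $L'=\ell'/\e$ and fix large $R>1$. I split $(-L',L')$ into three ranges. For $\abs{z_0}<R$, $\eta_{1,L}(z_0)$ is uniformly bounded and the contribution is $O(R)$. For $R\le \abs{z_0}\le \min(L/2,L')$, one has $\abs{z_0+z}\ge L/2$ whenever $\abs{z}>L$, so~\eqref{eq: decay w'} gives the uniform truncation estimate
\begin{equation*}
    0\le\eta_{1,\infty}(z_0)-\eta_{1,L}(z_0) = \int_{\abs{z}>L} w'(z_0+z)\abs{z}^{-1/2}\,dz \;\lesssim\; L^{-2},
\end{equation*}
and combined with Step~1 this yields $\eta_{1,L}(z_0)^2=(4/\abs{z_0})(1+o_R(1))$ uniformly in $\e$, contributing $8\log\min(L/2,L') + O(\log R) + o_R(1)\log L$. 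For $\min(L/2,L')<\abs{z_0}< L'$, where the truncation at $\pm L$ can no longer be treated as a perturbation of $\eta_{1,\infty}$, a case analysis on $\abs{z_0}\in(L/2,2L)$ and $\abs{z_0}>2L$ using~\eqref{eq: decay w'} (the main technical point) gives $\eta_{1,L}(z_0)\lesssim L^{-1/2}$, so this region contributes only $O(1)$. Summing, $\int_{-L'}^{L'}\eta_{1,L}(z_0)^2\,dz_0 = 8\abs{\log\e} + O(1) + o_R(1)\abs{\log\e}$, and dividing by $\abs{\log\e}$, then letting $\e\to 0^+$ first and $R\to\infty$ second, yields the limit $8$, again independently of $\ell,\ell'$.
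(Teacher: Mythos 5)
Your proof is correct and reaches the same conclusion by a genuinely different route from the paper. Where you derive a sharp pointwise asymptotic $\eta_{1,\infty}(z_0)=2\abs{z_0}^{1-2s}(1+o(1))$ directly (splitting at $\abs{y}=\abs{z_0}/2$ and factoring out $\abs{z_0}^{1-2s}$), the paper instead introduces an auxiliary parameter $\nu>1$, proves two-sided bounds of the form $2\nu^{1-2s}\abs{z}^{1-2s}+O(\abs{z}^{1-4s})\le\eta_{1,L}(z)\le 2\nu^{2s-1}\abs{z}^{1-2s}+O(\abs{z}^{1-4s})$ on the range $1\le\abs{z}\le L/\nu$, and extracts the sharp constant $8$ at $s=\sfrac34$ only at the very end by squeezing $\nu\to1^+$. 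Your monotone-convergence argument for $s>\sfrac34$ is also cleaner and more self-contained than the paper's, which simply declares the passage to the limit \quotes{trivial} and then checks finiteness from the $\nu=2$ bounds. The three-region decomposition for $s=\sfrac34$ is structurally the same in both proofs, and your estimates in the transition and far-field regions (the $L^{-2}$ truncation bound and the $\eta_{1,L}\lesssim L^{-1/2}$ tail bound) are correct. One minor imprecision worth fixing: dominated convergence alone gives only a qualitative $o(1)$ relative error, not the quantitative $O(\abs{z_0}^{-2s})$ you state; the rate actually comes from a second-order Taylor expansion of $(1-y/z_0)^{1-2s}$ exploiting the evenness of $w'$ (so the linear term cancels), and it yields the stronger $O(\abs{z_0}^{1-4s})$. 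Since both cases only need $\eta_{1,\infty}(z_0)=(2+o_R(1))\abs{z_0}^{1-2s}$ uniformly for $\abs{z_0}\ge R$, this does not affect the validity of your argument, but the error exponent as written is not justified by the method you invoke.
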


\begin{proof}
Fix $\ell, \ell' >0$. Given $\e>0$ we estimate the decay of the inner integral uniformly with respect to $\e$. From now on, unless otherwise specified, we neglect constants depending on $s,W, \ell, \ell'$. By the decay of $w'$ (see \cref{t:optimal profile}) we estimate
\begin{equation}
    \sup_{\e >0} \sup_{\abs{z} \leq 1} \int_{-\sfrac{\ell}{\e}}^{\sfrac{\ell}{\e}} \frac{w'(z+t)}{\abs{t}^{2s-1}} \, dt \lesssim \int_{\abs{t} \leq 2} \frac{1}{\abs{t}^{2s-1}} \, dt + \sup_{\abs{z} \leq 1} \int_{ \abs{t}\geq 2 } \frac{1}{1 + \abs{t+z}^{1+2s} } \, dt < +\infty. \label{eq: new proof 1}
\end{equation}
To estimate the inner integral for $\abs{z} \geq 1$, we fix a constant $\nu >1$ and we define 
\begin{equation}
I_{z, \nu} := \int_{\abs{t} \leq \sfrac{\abs{z}}{\nu}}  \frac{w'(t+z)}{\abs{t}^{2s-1}}\, dt, \quad II_{z, \nu} =: \int_{\sfrac{\abs{z}}{\nu} \leq \abs{t} \leq \nu \abs{z}} \frac{w'(t+z)}{\abs{t}^{2s-1}} \, dt, \quad III_{z,\nu} := \int_{ \abs{t}\geq \nu \abs{z} } \frac{w'(t+z)}{\abs{t}^{2s-1}} \, dt. 
\end{equation}
For the second term we have
\begin{equation}
    \nu^{1-2s} \abs{z}^{1-2s} \int_{\sfrac{\abs{z}}{\nu} \leq \abs{t} \leq \nu \abs{z}} w'(t+z) \, dt \leq II_{z,\nu} \leq \nu^{2s-1} \abs{z}^{1-2s} \int_{\sfrac{\abs{z}}{\nu} \leq \abs{t} \leq \nu \abs{z}} w'(t+z) \, dt.  
\end{equation}
Moreover, if $ z\geq 1 $ by direct computation and \eqref{eq: decay of w}, we estimate 
\begin{align}
    \int_{\sfrac{\abs{z}}{\nu} \leq \abs{t} \leq \nu \abs{z}} w'(t+z) \, dt & = w(z(1+\nu)) -  w\left( z \left( 1+ \frac{1}{\nu}\right)  \right) + w\left( z \left( 1- \frac{1}{\nu}\right)  \right) - w(z(1-\nu)) 
    \\ & = 2 + O(\abs{z}^{-2s}),    
\end{align}
where the reminder depends also on $\nu$. The same estimate can be proved for $z<-1$. Hence, for $\abs{z} \geq 1$ we have that 
\begin{equation}
    2 \nu^{1-2s} \abs{z}^{1-2s} + O(\abs{z}^{1-4s}) \leq II_{z,\nu} \leq 2 \nu^{2s-1} \abs{z}^{1-2s} + O(\abs{z}^{1-4s}). \label{eq: new proof 10}
\end{equation}
By \cref{t:optimal profile}, for $\abs{z} \geq 1$ we have the following estimate for the first term
\begin{equation}
    I_{z,\nu} \lesssim \abs{z}^{-1-2s} \left( 1- \frac{1}{\nu}\right)^{-1-2s} \int_{\abs{t} \leq \sfrac{\abs{z}}{\nu}} \frac{1}{\abs{t}^{2s-1}}\, dt \leq O(\abs{z}^{1-4s}), \label{eq: new proof 11}
\end{equation}
where the reminder depends on $\nu$. Similarly, for the third term, for $\abs{z} \ge 1$ we have
\begin{equation}
    III_{z, \nu} \leq \nu^{1-2s} \abs{z}^{1-2s} \int_{\abs{t} \geq \nu \abs{z} } w'(t+z) \, dt \leq \nu^{1-2s} \abs{z}^{1-2s} \int_{\abs{t'} \geq \abs{z} (\nu-1)} w'(t')\, dt' \leq O(\abs{z}^{1-4s}), \label{eq: new proof 12}
\end{equation}
where the reminder depends on $\nu$. Then, if $\abs{z} \in \left(1, \sfrac{\ell}{\e \nu}\right)$, we have that
\begin{equation}
    II_{z, \nu} \leq \int_{-\sfrac{\ell}{\e}}^{\sfrac{\ell}{\e}} \frac{w'(t+z)}{\abs{t}^{2s-1}} \, dt \leq I_{z,\nu} + II_{z,\nu} +III_{z,\nu}
\end{equation}
and by \eqref{eq: new proof 10}, \eqref{eq: new proof 11} and~\eqref{eq: new proof 12} we estimate
\begin{equation} \label{eq: new proof 2}
    2\nu^{1-2s} \abs{z}^{1-2s} + O(\abs{z}^{1-4s}) \leq \int_{-\sfrac{\ell}{\e}}^{\sfrac{\ell}{\e}} \frac{w'(z+t)}{\abs{t}^{2s-1}} \, dt \leq 2 \nu^{2s-1} \abs{z}^{1-2s} + O(\abs{z}^{1-4s}). 
\end{equation}
Here the reminders depend on $\nu$, but they are independent of $\e$. With the same technique, for $\abs{z} \in \left( \sfrac{\ell}{\e \nu} , \sfrac{\ell \nu}{\e} \right)$ we have 
\begin{equation}
\int_{-\sfrac{\ell}{\e}}^{\sfrac{\ell}{\e}} \frac{w'(z+t)}{\abs{t}^{2s-1}} \, dt \leq I_{z,\nu} + II_{z,\nu} \leq O(\abs{z}^{1-2s}).  \label{eq: new proof 3}
\end{equation} 
Similarly, for $\abs{z} \geq \sfrac{\ell \nu}{\e }$ we have 
\begin{equation}
\int_{-\sfrac{\ell}{\e}}^{\sfrac{\ell}{\e}} \frac{w'(z+t)}{\abs{t}^{2s-1}} \, dt \leq I_{z,\nu} \leq O(\abs{z}^{1-4s}). \label{eq: new proof 4}
\end{equation} 
Here the reminders depend on $\nu$ but not on $\e$. In conclusion, if $s \in \left( \sfrac{3}{4}, 1\right)$ it is trivial to see that 
\begin{equation}
    \mu_w = \int_{\R} \left( \int_{\R} \frac{w'(t+z)}{\abs{t}^{2s-1}} \, dt \right)^2 \, d z \in (0,+\infty)
\end{equation}
and it is independent of $\ell, \ell'$. To show that $\mu_w$ is finite, using \eqref{eq: new proof 1}, \eqref{eq: new proof 2}, \eqref{eq: new proof 3} and~\eqref{eq: new proof 4} with $\nu =2$, we have
\begin{align}
    \sup_{\e > 0} \int_{-\sfrac{\ell'}{\e}}^{\sfrac{\ell'}{\e}} \left( \int_{-\sfrac{\ell}{\e}}^{\sfrac{\ell}{\e} } \frac{w'(t+z)}{\abs{t}^{2s-1}} \, dt \right)^2 \, dz & \lesssim 1+ \int_{ \abs{z} >1 } \abs{z}^{2-4s}\, dz  < +\infty. 
\end{align}
To conclude, we consider the case $s = \sfrac{3}{4}$. Then, using \eqref{eq: new proof 1}, \eqref{eq: new proof 2}, \eqref{eq: new proof 3} and~\eqref{eq: new proof 4} with $\nu >1$ (to be chosen later), we have 
\begin{align}
    & \limsup_{\e \to 0^+} \frac{1}{\abs{\log(\e)}}\int_{-\sfrac{\ell'}{\e}}^{\sfrac{\ell'}{\e}}  \left( \int_{-\sfrac{\ell}{\e}}^{\sfrac{\ell}{\e} } \frac{w'(t+z)}{\abs{t}^{\frac{1}{2}}} \, dt \right)^2 \, dz \leq \limsup_{\e \to 0^+} \frac{1}{\abs{\log(\e)}}\int_{\R}  \left( \int_{-\sfrac{\ell}{\e}}^{\sfrac{\ell}{\e} } \frac{w'(t+z)}{\abs{t}^{\frac{1}{2}}} \, dt \right)^2 \, dz \
    \\ \hspace{0.5 cm} & \leq \limsup_{\e \to 0^+} \frac{1}{\abs{\log(\e)}} \left( \int_{\abs{z}\leq 1} + \int_{1 \leq \abs{z} \leq \sfrac{\ell}{\e \nu}} + \int_{\sfrac{\ell}{\e \nu} \leq \abs{z} \leq \sfrac{\ell \nu}{\e}}  + \int_{ \abs{z} \geq \sfrac{\ell \nu}{\e}  } \right)  \left( \int_{-\sfrac{\ell}{\e}}^{\sfrac{\ell}{\e} } \frac{w'(t+z)}{\abs{t}^{\frac{1}{2}}} \, dt \right)^2 \, dz 
    \\ & \leq \limsup_{\e \to 0^+} \frac{1}{\abs{\log(\e)}} \bigg( O(1) + \int_{1 \leq \abs{z} \leq \sfrac{\ell}{\e \nu} } \left( \frac{4\nu}{\abs{z}} + O(\abs{z}^{-4}) \right) \, dz + \int_{ \sfrac{\ell}{\e \nu} \leq \abs{z} \leq \sfrac{\ell \nu}{ \e}  } O(\abs{z}^{-1}) \, dz 
    \\ & \hspace{1 cm} + \int_{ \abs{z} \geq \sfrac{\ell \nu}{\e}  } O(\abs{z}^{-4}) \, dz \bigg) = 8 \nu. \label{eq: new proof 5}
\end{align}
Using the lower bound in \eqref{eq: new proof 2} and setting $\bar{\ell} := \min\{\ell, \ell'\}$, we estimate 
\begin{align}
    \liminf_{\e \to 0^+} \frac{1}{\abs{\log(\e)}} & \int_{-\sfrac{\ell'}{\e}}^{\sfrac{\ell'}{\e}}  \left( \int_{-\sfrac{\ell}{\e}}^{\sfrac{\ell}{\e} } \frac{w'(t+z)}{\abs{t}^{\frac{1}{2}}} \, dt \right)^2 \, dz \geq \liminf_{\e \to 0^+} \frac{1}{\abs{\log(\e)}}\int_{ 1 \leq \abs{z} \leq \sfrac{\bar{\ell}}{\e \nu} }  \left( \int_{-\sfrac{\ell}{\e}}^{\sfrac{\ell}{\e} } \frac{w'(t+z)}{\abs{t}^{\frac{1}{2}}} \, dt \right)^2 \, dz 
    \\ & \geq \liminf_{\e \to 0^+} \frac{1}{\abs{\log(\e)}} \int_{1 \leq \abs{z} \leq \sfrac{\bar{\ell}}{\e \nu} } \left( \frac{4 \nu^{-1}}{\abs{z}} + O(\abs{z}^{-4}) \right) \, dz  \geq 8 \nu^{-1}. \label{eq: new proof 6}
\end{align}
Combining \eqref{eq: new proof 5} and \eqref{eq: new proof 6} and letting $\nu \to 1^+$, we prove \eqref{eq: limit constant 2}.
\end{proof}

\section{Proof of the main result} \label{s:proof of main theorem}

This section is entirely devoted to the proof of \cref{thm:main}. Our analysis is based on the following heuristic argument which allows to guess the right scaling of the energy for different values of the parameter $s \in \left(\sfrac{1}{2},1\right)$. Given a smooth set $E$, let $u_\e(x) = w_\e(\beta_\Sigma(x))$, where $w_\e$ is the scaled one-dimensional optimal profile and $\beta_\Sigma$ is the modified signed distance function (see \cref{d:regular distance}). We split the energy in the contribution around the interface and the contribution far from the interface. By \cref{t:fractional laplacian}, we exploit the cancellations in the energy around the interface due to the structure of the recovery sequence and we use the decay property of the optimal profile to estimate separately the two terms in the integral away from the interface. More precisely, it is not difficult to see that 
\begin{align}
    \int_{\Omega} \left( \e^{2s-1} (-\Delta)^s u_\e + \frac{W'(u_\e)}{\e} \right)^2 \, dx \simeq &
    \, \e^{4s-2} \mathcal{W}(\Sigma, \Omega) \int_{-\delta}^\delta \left( \int_{-\delta}^\delta \frac{w_\e'(z+t)}{\abs{t}^{2s-1}} \, dt \right)^2 \, dz 
    \\
    & + \e^{4s-2} \left( \norm{R_\e}_{L^\infty(\Sigma_\delta)}^2 + \norm{(-\Delta)^s u_\e}^2_{L^\infty(\Omega\setminus\Sigma_\delta)}\right) \\
    & + \e^{-2} \norm{W'(u_\e)}_{L^\infty(\Omega \setminus \Sigma_\delta)}^2 
    \\
    \simeq & \, \e \mathcal{W}(\Sigma, \Omega) \int_{-\sfrac{\delta}{\e}}^{\sfrac{\delta}{\e}} \left( \int_{-\sfrac{\delta}{\e}}^{\sfrac{\delta}{\e}} \frac{w'(z+t)}{\abs{t}^{2s-1}} \, dt \right)^2 \, dz  +  O(\e^{4s-2}). 
\end{align} 
Moreover, we have
    \begin{equation}
    \int_{-\sfrac{\delta}{\e}}^{\sfrac{\delta}{\e}} \left( \int_{-\sfrac{\delta}{\e}}^{\sfrac{\delta}{\e}} \frac{w'(z+t)}{\abs{t}^{2s-1}} \, dt \right)^2 \, dz \simeq 
    \begin{cases} 
    1 &\text{ if } s \in \left( \sfrac{3}{4},1 \right), \\[0.5ex]
    \abs{\log(\e)} & \text{ if } s = \sfrac{3}{4}, \\[0.5ex]
    \e^{4s-3} & \text{ if } s \in (\sfrac{1}{2},\sfrac{3}{4}),
    \end{cases}
\end{equation}
where we proved the first two estimates in~\cref{l: eta in L^2}, while the last one can be obtained with an simple modification of the argument. Therefore, we conclude that
\begin{equation}
    \int_{\Omega} \left( \e^{2s-1} (-\Delta)^s u_\e + \frac{W'(u_\e)}{\e} \right)^2 \, dx \simeq 
    \begin{cases}
    \e \mathcal{W}(\Sigma, \Omega) + O(\e^{4s-2}) & \text{ if } s \in \left( \sfrac{3}{4}, 1 \right), 
    \\[1ex]
    \e \abs{\log \e} \mathcal{W}(\Sigma, \Omega) + O(\e^{4s-2}) & \text{ if } s = \sfrac{3}{4},
    \\[1ex]
    \e^{4s-2} \mathcal{W}(\Sigma, \Omega) + O(\e^{4s-2}) & \text{ if } s \in \left( \sfrac{1}{2}, \sfrac{3}{4} \right). 
    \end{cases}
\end{equation}
This argument shows different behaviours according to the value of the parameter $s$. Indeed, if $s > \sfrac{3}{4}$ then $\e \mathcal{W}(\Sigma, \Omega)$ is the leading order term and, after dividing the energy by $\e$, we see the Willmore functional in the limit. The situation is similar for $s = \sfrac{3}{4}$, provided that we divide the energy by $\e \abs{\log(\e)}$. Roughly speaking, if $s \in \left[ \sfrac{3}{4}, 1\right)$, it turns out that the energy around the interface is much larger than the energy away from the interface and, after introducing an appropriate scaling, the limit has a purely local behaviour. In the proof of \cref{thm:main}, we make this argument rigorous. This heuristic breaks down when $s \in \left( \sfrac{1}{2}, \sfrac{3}{4}\right)$. Our analysis suggests that all terms have the same order of magnitude and, after dividing the energy by $\e^{4s-2}$, a local/nonlocal behaviour might persist in the limit. To conclude, it seems that a finer analysis of both the error term in the expansion of the fractional Laplacian around the interface and the energy away from the interface would be needed to understand the limiting behaviour of our energy in the case $s \in \left( \sfrac{1}{2}, \sfrac{3}{4}\right)$. From now on, we focus on the case $s \in \left[ \sfrac{3}{4},1\right)$.  

\begin{proof} [Proof of \cref{thm:main}]
We divide the proof in some steps. We recall that $c_\star$ is the constant in \eqref{eq:SV} and we define
\begin{equation} \label{eq: the exact constant}
        \kappa_\star
        := \begin{cases}
        \displaystyle \frac{\gamma_{1,s}^2}{4(2s-1)^2} \int_{\R} \left( \int_{\R} \frac{w'(t+z)}{\abs{t}^{2s-1}} \, dt \right)^2 \, dz & s \in \left(\sfrac{3}{4}, 1\right), 
        \\[0.5ex]
        8 \gamma_{1, \sfrac{3}{4}}^2 & s = \sfrac{3}{4}.
    \end{cases}
\end{equation} 

\textsc{\underline{Step 1}}. Let $s \in \left( \sfrac{3}{4}, 1 \right)$ and let $E \subset \R^d$ be a bounded open set with $\partial E \in C^3$. Define 
\begin{equation}
    u_\e(x) = w_\e(\beta_\Sigma(x)), \label{eq:formula recovery sequence}
\end{equation}
where $w_\e(z) = w \left(\sfrac{z}{\e} \right)$, $w$ is the one-dimensional optimal profile and $\beta_\Sigma$ is the modified distance function according to \cref{d:regular distance}. We claim that for any bounded open set $\Omega \subset \R^d$ we have 
\begin{equation}
    \limsup_{\e \to 0^+} \left( \mathcal{F}_{s, \e} + \mathcal{G}_{s,\e} \right) (u_\e, \Omega) \leq c_\star \mathcal{H}^{d-1}(\partial E \cap \overline{\Omega}) + \kappa_{\star} \int_{\partial E \cap \overline{\Omega}} H_{\partial E}(y)^2 \, d \mathcal{H}^{d-1}(y). \label{eq: limsup of recovery sequence}
\end{equation}
The same conclusions holds for $s = \sfrac{3}{4}$ adding an extra $\abs{\log(\e)}^{-1}$ in front of $\mathcal{G}_{s,\e}$. 

For $s \in \left[ \sfrac{3}{4}, 1\right)$, by a simple modification of the proof of \cite{SV12}*{Proposition 4.7} due to the fact that $\beta_\Sigma$ is the proper distance function in a tubular neighbourhood of $\Sigma = \partial E$, it is readily checked that 
\begin{equation} \label{eq: limsup SV}
    \limsup_{\e \to 0^+} \mathcal{F}_{\e,s}(u_\e, \Omega) \leq c_{\star} \mathcal{H}^{d-1}( \Sigma \cap \overline{\Omega}).
\end{equation} 
We point out that no regularity on $\partial \Omega$ is needed. Therefore, if $s \in \left( \sfrac{3}{4}, 1 \right)$ it remains to check that 
\begin{equation} \label{eq: limsup of G_s,e}
    \limsup_{\e \to 0^+} \mathcal{G}_{s,\e}(u_\e, \Omega) \leq \kappa_\star \int_{\Sigma \cap \overline{\Omega}} H_{\Sigma}(y)^2 \, d \mathcal{H}^{d-1}(y).
\end{equation}
If $s = \sfrac{3}{4}$, \eqref{eq: limsup of G_s,e} holds adding an extra $\abs{\log(\e)}^{-1}$ in front of $\mathcal{G}_{s,\e}$. 

\textsc{The case $s \in \left( \sfrac{3}{4}, 1 \right)$}. Given $\delta>0$ as in \cref{d:regular distance} and $\Lambda_0 \geq 1 $ as in \cref{t:fractional laplacian}, for $\Lambda \geq \Lambda_0$ we split
\begin{align}
    \mathcal{G}_{s,\e}(u_\e, \Omega) & =  \left( \frac{1}{\e} \int_{\Omega \cap \Sigma_{\sfrac{\delta}{10 \Lambda}} } + \frac{1}{\e} \int_{\Omega \setminus \Sigma_{\sfrac{\delta}{10 \Lambda}}} \right) \left( \e^{2s-1} (-\Delta)^s u_\e(x) + \frac{W'(u_\e(x))}{\e}  \right)^2 \, dx = I_{\e,\Lambda} + II_{\e,\Lambda}. 
\end{align}
We claim that 
\begin{equation}
\lim_{\e \to 0^+} II_{\e,\Lambda}  = 0 \qquad \forall \Lambda \geq \Lambda_0. \label{claim 2}    
\end{equation}
To deal with the energy away from the interface, we neglect constants independent of $\e$. Thus, for $\Lambda \geq \Lambda_0$, we have  
\begin{align}
    II_{\e,\Lambda} & \lesssim \e^{4s-3} \int_{\Omega \setminus \Sigma_{\sfrac{\delta}{10 \Lambda}}} \left( (-\Delta)^s u_\e (x)  \right)^2 \, dx + \e^{-3} \int_{\Omega \setminus \Sigma_{\sfrac{\delta}{10 \Lambda}}} \left(  W'(u_\e(x)) \right)^2 \, dx =  II_{1, \e,\Lambda} + II_{2, \e, \Lambda}. 
\end{align} 
By \cref{l: L^infty bound away from the boundary}, we conclude that $II_{1, \e, \Lambda} \to 0$ as $\e \to 0$ for $s>\sfrac{3}{4}$. Regarding $I_{2, \e, \Lambda}$, since $W'$ is Lipschitz and odd, using \eqref{eq: decay of w}, the monotonicity of $w$ and the fact that $\abs{\beta_{\Sigma}(x)} > \sfrac{\delta}{10 \Lambda}$ in the domain of integration (see \cref{d:regular distance}), we have that 
\begin{align}
   II_{2, \e,\Lambda} & = \e^{-3} \int_{ E \setminus \Sigma_{\sfrac{\delta}{10 \Lambda}}} (W'(u_\e(x)) - W'(1))^2\, dx + \e^{-3} \int_{(\Omega \setminus E) \setminus \Sigma_{\sfrac{\delta}{10 \Lambda}} } (W'(u_\e(x)) - W'(-1) )^2 \, dx   
   \\ & \lesssim \e^{-3} \int_{E \setminus \Sigma_{\sfrac{\delta}{10 \Lambda}}} \abs{ u_\e(x) - 1}^2\, dx + \e^{-3} \int_{(\Omega \setminus E) \setminus \Sigma_{\sfrac{\delta}{10 \Lambda}} } \abs{u_\e(x) + 1 }^2 \, dx 
   \\ & \lesssim \e^{-3} \int_{E \setminus \Sigma_{\sfrac{\delta}{10 \Lambda}} } \abs{ 1- w\left( \frac{\delta}{10 \Lambda \e}\right) }^2\, dx + \e^{-3} \int_{(\Omega \setminus E) \setminus \Sigma_{\sfrac{\delta}{10 \Lambda}} } \abs{ 1+ w\left(-\frac{\delta}{10 \Lambda \e}\right) }^2 \, dx \lesssim \e^{4s-3}. 
\end{align}
Hence, we have that $II_{2,\e,\Lambda} \to 0$ as $\e \to 0^+$, since $s >\sfrac{3}{4}$. 

Letting be $\kappa_\star$ as in~\eqref{eq: the exact constant}, it remains to check that 
\begin{equation}
\inf_{\Lambda \geq \Lambda_0} \limsup_{\e \to 0^+} I_{\e,\Lambda} \leq \kappa_\star \int_{\Sigma \cap \overline{\Omega}} H_{\Sigma}(x) ^2 \, d \mathcal{H}^{d-1}(x). \label{claim 1}
\end{equation}
Then, \eqref{eq: limsup of G_s,e} follows by \eqref{claim 2} and \eqref{claim 1}. The proof of \eqref{claim 1} is based on the expansion of the fractional Laplacian \eqref{eq:expansion}. Fix $\Lambda \geq \Lambda_0$. Since $\beta_\Sigma$ is the proper signed distance in $\Sigma_{\sfrac{\delta}{10 \Lambda}}$ (see \cref{d:regular distance}), we have  
\begin{equation}
    I_{\e,\Lambda} = \frac{1}{\e} \int_{\Sigma_{\sfrac{\delta}{10 \Lambda}} \cap \Omega} \left( \frac{W'(u_\e(x))}{\e}  + \e^{2s-1} \left( (-\partial_{zz})^s w_\e(z) + \frac{\gamma_{1,s}}{4s-2} H_{\Sigma}(x') \eta_{\e,\sfrac{\delta}{10 \Lambda}} (z) + \mathcal{R}_{\e, \Lambda}(x) \right)   \right)^2  \, dx, \label{eq:limsup_1}
\end{equation}
where we set $x'= \pi_{\Sigma}(x)$, $z = \dist_{\Sigma}(x)$, $\eta_{\e, \sfrac{\delta}{10 \Lambda}}$ is given by \eqref{eq: eta_e,kappa} and $\mathcal{R}_{\e, \Lambda}$ is uniformly bounded with respect to $\e$ in $L^\infty(\Sigma_{\sfrac{\delta}{10 \Lambda}})$ (see \cref{t:fractional laplacian}). By the scaling properties of the fractional Laplacian and the fact that $w$ solves \eqref{eq: fractional AC}, it results that
$$(-\partial_{zz})^s w_\e (z) = \e^{-2s} (-\partial_{zz})^s w\left( \sfrac{z}{\e}\right) = -\e^{-2s} W'\left(w\left( \sfrac{z}{\e}\right) \right) = - \e^{-2s} W'(u_\e(x)). $$
Therefore, \eqref{eq:limsup_1} reads as 
\begin{equation}
    I_{\e, \Lambda} = \e^{4s-3} \int_{\Sigma_{\sfrac{\delta}{10 \Lambda}} \cap \Omega}  \left( \frac{\gamma_{1,s}}{4s-2} H_{\Sigma}(x') \eta_{\e,\sfrac{\delta}{10 \Lambda}}(z) + \mathcal{R}_{\e, \Lambda}(x)  \right)^2  \, dx. \label{eq:limsup_2}
\end{equation}
Furthermore, using \eqref{eq: eta scaling} and changing variables, we have that 
\begin{align}
    \e^{4s-3} \int_{-\sfrac{\delta}{10 \Lambda}}^{\sfrac{\delta}{10 \Lambda}} \eta_{\e,\sfrac{\delta}{10 \Lambda}} (z)^2 \, dz & = \e^{-1} \int_{-\sfrac{\delta}{10 \Lambda}}^{\sfrac{\delta}{10 \Lambda}} \eta_{1,\sfrac{\delta}{10 \e \Lambda}}\left(\frac{z}{\e}\right)^2 \, dz = \int_{-\sfrac{\delta}{10 \e \Lambda}}^{\sfrac{\delta}{10 \e \Lambda}} \eta_{1,\sfrac{\delta}{10 \e \Lambda}}(z)^2 \, dz.
\end{align} 
Thus, using \cref{l: eta in L^2} we have that 
\begin{equation}
    \lim_{\e \to 0^+} \e^{4s-3} \int_{-\sfrac{\delta}{10 \Lambda}}^{\sfrac{\delta}{10 \Lambda}} \eta_{\e, \sfrac{\delta}{10 \Lambda}}(z)^2 \, dz = \int_{\R} \left( \int_{\R} \frac{w'(t+z)}{\abs{t}^{2s-1}} \, dt \right)^2 \, dz = \mu_w. \label{eq: limit computation constant}
\end{equation}
Thus, since $s >\sfrac{3}{4}$ and $\mathcal{R}_{\e, \Lambda}$ is uniformly bounded with respect to $\e$ in $L^{\infty}(\Sigma_{\sfrac{\delta}{10 \Lambda}})$, we have that 
\begin{equation}
    \limsup_{\e \to 0^+} I_{\e,\Lambda} =
    \frac{\gamma_{1,s}^2}{4(2s-1)^2}
    \limsup_{\e \to 0^+} \e^{4s-3} 
    \int_{\Omega \cap \Sigma_{\sfrac{\delta}{10 \Lambda}}} \abs{H_{\Sigma}(x') \eta_{\e,\sfrac{\delta}{10 \Lambda}}(z)}^2 \, dx. \label{eq: limsup_3} 
\end{equation}
We remark that 
$$ \Sigma_{\sfrac{\delta}{10 \Lambda}} \cap \Omega \subset \left\{ x'+z N_{\Sigma}(x') \colon x' \in \pi_{\Sigma}\left( \Sigma_{\sfrac{\delta}{10 \Lambda}} \cap \Omega \right), \ z \in \left(-\sfrac{\delta}{10 \Lambda}, \sfrac{\delta}{10 \Lambda} \right) \right\}. $$
Thus, by \eqref{eq: limit computation constant}, \eqref{eq: limsup_3} and the Coarea formula, we write 
\begin{align}
    \limsup_{\e \to 0^+} I_{\e, \Lambda} & = \limsup_{\e \to 0^+} \e^{4s-3} \int_{-\sfrac{\delta}{10 \Lambda}}^{\sfrac{\delta}{10 \Lambda}} \abs{\eta_{\e,\sfrac{\delta}{10 \Lambda}} (z)}^2 \left( \int_{ \pi_{\Sigma} (\Sigma_{\sfrac{\delta}{10 \Lambda}} \cap \Omega) } \abs{H_{\Sigma}(x') }^2 \, d \mathcal{H}^{d-1}(x') \right) \, dz 
    \\ & = \mu_w \int_{ \pi_{\Sigma} (\Sigma_{\sfrac{\delta}{10 \Lambda}} \cap \Omega ) } \abs{H_{\Sigma}(x') }^2 \, d \mathcal{H}^{d-1}(x'), \label{eq: limsup 100}
\end{align}
Hence, \eqref{claim 1} follows by the computation above and the fact $\bigcap_{\Lambda\geq \Lambda_0} \pi_{\Sigma} (\Sigma_{\sfrac{\delta}{10 \Lambda}} \cap \Omega ) \subset \Sigma \cap \bar{\Omega}$.

\textsc{The case $s = \sfrac{3}{4}$}. Here, we have $4s-3 = 0$, and hence $\e^{4s-3} =1 $, for any $\e > 0$. However, there is an extra factor $\abs{\log(\e)}^{-1}$ in the definition of $\mathcal{G}_{s, \e}$ and \cref{l: eta in L^2} is modified accordingly. Therefore, the proof is similar to the previous case and we leave the straightforward modifications to the reader.

\textsc{\underline{Step 2}}. Let $\Omega, E \subset \R^d$ be bounded open sets with $\partial \Omega \in C^1$ and $\partial E \in C^2$. By \cref{l:approximation}, we find a sequence of smooth sets $\{E_j\}_{j \in \N}$ satisfying \eqref{approx 1}, \eqref{approx 2}, \eqref{approx 3}, \eqref{approx 4}, \eqref{approx 5}. By the argument shown in the previous step, for any $j \in \N$ we find a sequence $u_\e^j \to \chi_{E_j}$ in $L^1_{\rm loc}(\R^d)$ such that \eqref{eq: limsup of recovery sequence} holds for $E_j$. By \eqref{approx 2} and \eqref{approx 4}, we have that $\mathcal{H}^{d-1}(\partial E_j \cap \overline{\Omega}) \to \textrm{Per}(E, \Omega)$. Then, by \eqref{approx 3}, \eqref{approx 4}, \eqref{approx 5}, it is immediate to check that 
\begin{equation}
    \lim_{j \to \infty} \int_{\partial E_j \cap \overline{\Omega}} H_{\partial E_j}(y)^2 \, d \mathcal{H}^{d-1}(y) = \mathcal{W}(\Sigma, \Omega).  \label{eq: USC willmore}
\end{equation}
Then, by a diagonal argument we build a sequence $u_\e \to \chi_E$ in $L^1_{\rm loc}(\R^d)$ such that \eqref{eq: limsup of recovery sequence} holds. 
\end{proof}

\begin{remark}
We point out that by a simple modification of the proof of \cref{thm:main} it is possible to check that
\begin{equation}
    \lim_{\e \to 0^+} \left( \mathcal{F}_{s, \e} + \mathcal{G}_{s,\e} \right) (u_\e, \Omega) = c_\star \mathcal{H}^{d-1}(\partial E \cap \Omega) + \kappa_{\star} \int_{\partial E \cap \Omega} H_{\partial E}(y)^2 \, d \mathcal{H}^{d-1}(y), 
\end{equation}
where $s \in \left(\sfrac{3}{4},1 \right)$, $E$ is a bounded open set of class $C^3$, $u_\e$ is defined by \eqref{eq:formula recovery sequence} and $\Omega$ is any bounded open set whose boundary intersects transversally $\partial E$, that is $\mathcal{H}^{d-1}(\partial E \cap \partial \Omega) = 0$. Here, no regularity on $\partial \Omega$ is needed. The same conclusions holds for $s = \sfrac{3}{4}$ adding an extra $\abs{\log(\e)}^{-1}$ in front of $\mathcal{G}_{s,\e}$. We leave the details to the interested reader. 
\end{remark}

\begin{remark} \label{r: limsup inequality with closure}
Once \cref{thm:main} is established, we can prove \eqref{eq: limsup of recovery sequence} for bounded open sets $E$ of class~$C^2$. Indeed, given any bounded open set $\Omega$ (without further assumptions), we find a sequence of smooth bounded open sets $\Omega_j$ such that $\bigcap_j \Omega_j = \Omega$. Then, using \cref{thm:main} for $E$ in $\Omega_j$ and recalling that 
\begin{equation}
    \lim_{j \to \infty} c_\star \mathrm{Per}(E, \Omega_j) + \kappa_\star \mathcal{W}(E, \Omega_j) = c_\star \mathcal{H}^{d-1}(\partial E\cap \overline{\Omega}) + \kappa_\star \int_{\partial E \cap \overline{\Omega}} H_{\partial E}(y)^2 \, d \mathcal{H}^{d-1}(y), 
\end{equation} 
we build the sequence $\{u_\e\}$ by a standard diagonal argument. 
\end{remark}

\section{Appendix: The exact constants} \label{s:appendix}

We collect some useful results on the exact value of some constants involved in our computations. We recall that the Euler Gamma function and Beta function are defined respectively by 
\begin{equation}
    \Gamma(x) = \int_0^{+\infty} t^{x-1} e^{-t} \, dt, \qquad \mathrm{B}(x,y) = \int_0^1(1-t)^{x-1} t^{y-1} \, dt \qquad x,y >0.
\end{equation}
We recall that 
\begin{equation}
    \Gamma(x+1) = x \Gamma(x) ,  \qquad     \Gamma(\sfrac{1}{2}) = \sqrt{\pi}, \qquad \mathcal{H}^{d-1} (\mathbb{S}^{d-1}) = \frac{2 \pi^{\frac{d}{2}}}{\Gamma\left(\frac{d}{2}\right)}, \qquad \mathrm{B}(x,y) = \frac{\Gamma(x) \Gamma(y)}{\Gamma(x+y)}.  \label{eq: properties of Gamma 1}
\end{equation}

\begin{lemma}
\label{lem:const-1} 
Given $a >-1, b > a+1$, it holds
\begin{equation}
    2\int_0^\infty \frac{ r^a }{ (r^2+1)^{\frac{b}{2}} }\,dr = \frac{ \Gamma\left(\frac{a+1}{2}\right) \Gamma\left(\frac{b-(a+1)}{2}\right) }{ \Gamma\left(\frac{b}{2}\right) }. \label{eq: integral computation constant 1}
\end{equation}
\end{lemma}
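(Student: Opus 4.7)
The plan is to reduce the integral to a Beta function via two successive changes of variable, and then invoke the standard relation $\mathrm{B}(x,y) = \Gamma(x)\Gamma(y)/\Gamma(x+y)$ recalled in \eqref{eq: properties of Gamma 1}. The hypotheses $a > -1$ and $b > a+1$ are exactly what is needed to guarantee absolute convergence of the integral near $0$ and $+\infty$ respectively, and they will translate into the positivity of the arguments of the Beta function.

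First I would perform the substitution $u = r^2$, so that $du = 2r\,dr$ and $dr = \frac{du}{2 u^{1/2}}$, which turns the left-hand side of \eqref{eq: integral computation constant 1} into
\begin{equation}
    2 \int_0^\infty \frac{r^a}{(r^2+1)^{b/2}}\,dr
    = \int_0^\infty \frac{u^{(a-1)/2}}{(u+1)^{b/2}}\,du.
\end{equation}
Next I would apply the change of variable $t = \frac{u}{u+1} \in (0,1)$, equivalently $u = \frac{t}{1-t}$, so that $u+1 = \frac{1}{1-t}$ and $du = \frac{dt}{(1-t)^2}$. A direct computation of the exponents yields
\begin{equation}
    \int_0^\infty \frac{u^{(a-1)/2}}{(u+1)^{b/2}}\,du
    = \int_0^1 t^{(a+1)/2 - 1}(1-t)^{(b-a-1)/2 - 1}\,dt
    = \mathrm{B}\!\left( \tfrac{a+1}{2}, \tfrac{b-(a+1)}{2} \right),
\end{equation}
where the hypotheses $a>-1$ and $b>a+1$ ensure that both Beta exponents are strictly positive, so the integral converges.

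Finally, invoking the identity $\mathrm{B}(x,y) = \Gamma(x)\Gamma(y)/\Gamma(x+y)$ from \eqref{eq: properties of Gamma 1} with $x = \frac{a+1}{2}$ and $y = \frac{b-(a+1)}{2}$, and noting that $x+y = \frac{b}{2}$, gives \eqref{eq: integral computation constant 1}. There is no real obstacle in this proof; the only care needed is to track the exponents correctly through the two substitutions, and to verify that the convergence assumptions $a>-1$ and $b>a+1$ are precisely what guarantees the applicability of the Beta function representation.
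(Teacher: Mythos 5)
Your proof is correct and follows essentially the same route as the paper: a change of variables reducing the integral to the Beta function, followed by the Beta–Gamma identity. The only cosmetic difference is that the paper performs the substitution $t=(r^2+1)^{-1}$ in a single step, whereas you split it into $u=r^2$ followed by $t=u/(u+1)$; the two compose to $1$ minus the paper's variable, which is immaterial by the symmetry of $\mathrm{B}$.
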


\begin{proof}
The computation is straightforward. After the change of variable $t= (r^2+1)^{-1}$, by standard manipulations, we find that 
\begin{align}
    2 \int_0^\infty \frac{ r^a }{ (r^2+1)^{\frac{b}{2}} } \, dr & = \int_0^1 t^{\frac{b}{2}} \left( \frac{1-t}{t} \right)^{\frac{a-1}{2}} \frac{dt}{t^2} = \mathrm{B}\left( \frac{b-a-1}{2}, \frac{a+1}{2} \right). 
\end{align}
Hence, \eqref{eq: integral computation constant 1} follows by the above computation and \eqref{eq: properties of Gamma 1}. 
\end{proof}

\begin{corollary} \label{l: reduction of the kernel} 
Fix $\delta>0$ and $\alpha, \beta \geq 0$ such that $2s+\beta+1 >\alpha$. There exists an explicit constant $M(d,s,\alpha, \beta) >0$ such that for any $z \in (-\delta, \delta)$ it holds  
\begin{equation} \label{eq: reduction 4}
    \int_{B_\delta^{d-1}} \frac{\abs{y}^\alpha }{(\abs{z}^2 + \abs{y}^2)^{\frac{d+2s + \beta}{2}}} \, d y = \mathcal{M}(d,s,\alpha,\beta) \abs{z}^{\alpha-1-\beta-2s} + O(\delta^{\alpha -1 -\beta-2s }). 
\end{equation}
More precisely, it holds  
\begin{equation} \label{eq: reduction 1}
    \int_{B_\delta^{d-1}} \frac{1}{(\abs{z}^2 + \abs{y}^2)^{\frac{d+2s}{2}}} \, d y = \frac{\gamma_{1,s}}{\gamma_{d,s}} \abs{z}^{-1-2s} + O(\delta^{-1-2s}), 
\end{equation} 
\begin{equation} \label{eq: reduction 2}
    \int_{B_\delta^{d-1}} \frac{\abs{y}^2}{(\abs{z}^2 + \abs{y}^2)^{\frac{d+2s}{2}}} \, d y = \frac{(d-1) \gamma_{1,s}}{(2s-1) \gamma_{d,s} } \abs{z}^{1-2s} + O(\delta^{1-2s}), 
\end{equation} 
\begin{equation} \label{eq: reduction 3}
    \int_{B_\delta^{d-1}} \frac{y_i^2}{(\abs{z}^2 + \abs{y}^2)^{\frac{d+2s}{2}}} \, d y = \frac{1}{2s-1} \frac{ \gamma_{1,s}}{ \gamma_{d,s} } \abs{z}^{1-2s} + O(\delta^{1-2s}) \qquad i = 1, \dots, d-1.  
\end{equation} 
The reminders are uniform with respect to $z \in (-\delta, \delta)$.  
\end{corollary}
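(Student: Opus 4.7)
The strategy is to reduce the $(d-1)$-dimensional integral to a one-dimensional one via spherical coordinates, then rescale in the radial variable so as to separate the $|z|$-dependence from a pure numerical integral, and finally invoke \cref{lem:const-1} for the leading term while estimating the tail as the error.

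More precisely, writing $y = r\omega$ with $r = |y| \ge 0$ and $\omega \in \mathbb{S}^{d-2}$, one has
\begin{equation}
    \int_{B_\delta^{d-1}} \frac{|y|^\alpha}{(|z|^2 + |y|^2)^{\frac{d+2s+\beta}{2}}}\,dy
    = \mathcal{H}^{d-2}(\mathbb{S}^{d-2}) \int_0^\delta \frac{r^{\alpha+d-2}}{(|z|^2+r^2)^{\frac{d+2s+\beta}{2}}}\,dr.
\end{equation}
The substitution $r = |z| t$ then yields
\begin{equation}
    \mathcal{H}^{d-2}(\mathbb{S}^{d-2})\, |z|^{\alpha-1-2s-\beta}
    \int_0^{\sfrac{\delta}{|z|}} \frac{t^{\alpha+d-2}}{(1+t^2)^{\frac{d+2s+\beta}{2}}}\,dt,
\end{equation}
which immediately identifies the exponent $\alpha-1-2s-\beta$ of $|z|$ predicted by the statement. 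Writing $\int_0^{\sfrac{\delta}{|z|}} = \int_0^\infty - \int_{\sfrac{\delta}{|z|}}^\infty$, the first piece is computed by applying \cref{lem:const-1} with $a = \alpha + d -2$ and $b = d + 2s + \beta$ (the hypotheses $a > -1$ and $b > a+1$ correspond exactly to $d \ge 2$, $\alpha \ge 0$, and the assumption $2s+\beta+1 > \alpha$); this defines the constant $\mathcal{M}(d,s,\alpha,\beta)$. For the tail, since $(1+t^2)^{-(d+2s+\beta)/2} \le t^{-(d+2s+\beta)}$ for $t \ge 1$, one gets
\begin{equation}
    \int_{\sfrac{\delta}{|z|}}^\infty \frac{t^{\alpha+d-2}}{(1+t^2)^{\frac{d+2s+\beta}{2}}}\,dt
    \le \int_{\sfrac{\delta}{|z|}}^\infty t^{\alpha-2-2s-\beta}\,dt
    = \frac{1}{2s+\beta+1-\alpha}\Bigl(\frac{\delta}{|z|}\Bigr)^{\alpha-1-2s-\beta},
\end{equation}
which, after multiplication by $|z|^{\alpha-1-2s-\beta}$, gives precisely the $O(\delta^{\alpha-1-2s-\beta})$ error. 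This proves \eqref{eq: reduction 4}, and the uniformity of the remainder in $z \in (-\delta,\delta)$ is manifest.

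It remains to identify the constants in the three displayed cases with those involving $\gamma_{1,s}/\gamma_{d,s}$. For \eqref{eq: reduction 1} (i.e.\ $\alpha=\beta=0$), combining the above with \cref{lem:const-1} and $\mathcal{H}^{d-2}(\mathbb{S}^{d-2}) = 2\pi^{(d-1)/2}/\Gamma\bigl((d-1)/2\bigr)$ yields
\begin{equation}
    \mathcal{M}(d,s,0,0) = \pi^{\frac{d-1}{2}} \, \frac{\Gamma\bigl(\frac{2s+1}{2}\bigr)}{\Gamma\bigl(\frac{d+2s}{2}\bigr)},
\end{equation}
and a direct comparison with \eqref{eq: constant fractional laplacian} gives $\mathcal{M}(d,s,0,0) = \gamma_{1,s}/\gamma_{d,s}$. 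For \eqref{eq: reduction 2} one takes $\alpha = 2, \beta = 0$ and uses the recursion $\Gamma\bigl(\frac{2s+1}{2}\bigr) = \frac{2s-1}{2} \Gamma\bigl(\frac{2s-1}{2}\bigr)$ together with $\Gamma\bigl(\frac{d+1}{2}\bigr) = \frac{d-1}{2} \Gamma\bigl(\frac{d-1}{2}\bigr)$ to recognize the factor $(d-1)/(2s-1)$. Finally, \eqref{eq: reduction 3} follows from \eqref{eq: reduction 2} by the symmetry $\int y_i^2 = \frac{1}{d-1}\int |y|^2$ on $B_\delta^{d-1}$. No single step is a real obstacle; the only mildly delicate point is keeping track of the Gamma-function identities when matching constants, but this is a mechanical verification.
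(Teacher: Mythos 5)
Your proof is correct, and it takes essentially the same approach as the paper's: rescale by $|z|$ to factor out the power $|z|^{\alpha-1-\beta-2s}$, recognize the leading constant as a convergent integral over all of $\R^{d-1}$ computed via spherical coordinates and \cref{lem:const-1}, and bound the remaining tail by $O(\delta^{\alpha-1-\beta-2s})$. The only cosmetic difference is that you pass to polar coordinates before rescaling whereas the paper rescales first, and you verify the Gamma-function identities for \eqref{eq: reduction 2} in slightly more explicit detail; the substance is identical.
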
 

\begin{proof}
We set  
\begin{equation}
    \mathcal{M}(d,s,\alpha, \beta) := \int_{\R^{d-1}} \frac{\abs{y}^{\alpha}}{(1+\abs{y}^2)^{\frac{d+2s+\beta}{2}}} \, dy
\end{equation}
and we remark that $M(d,s,\alpha, \beta)$ is finite since $d+2s+\beta - \alpha > d-1$. Then, to check \eqref{eq: reduction 4}, we change variables $y= \abs{z} \tilde{y}$ and we compute 
\begin{align} 
\int_{B_\delta^{d-1}} \frac{\abs{y}^{\alpha}}{(\abs{z}^2 + \abs{y}^2)^{\frac{d+2s+\beta}{2}}} \, d y  & = \abs{z}^{\alpha-1-\beta-2s} \int_{B^{d-1}_{\sfrac{\delta}{\abs{z}} }} \frac{\abs{\tilde{y}}^{\alpha}}{ (1+ \abs{\tilde{y}}^2) ^{\frac{d+2s +\beta}{2}}} \, d \tilde{y}
\\ & = \abs{z}^{\alpha-1-\beta-2s} \left( \mathcal{M}(d,s,\alpha,\beta) - \int_{\abs{\tilde{y}} \geq \frac{\delta}{\abs{z}} } \frac{\abs{\tilde{y}}^{\alpha}}{ (1+ \abs{\tilde{y}}^2) ^{\frac{d+2s+\beta}{2}}} \, d \tilde{y} \right)
\\ & = \abs{z}^{\alpha-1-\beta-2s} \left( \mathcal{M}(d,s,\alpha,\beta) - O \left( \left(\frac{\delta}{\abs{z}} \right)^{\alpha-1-\beta-2s} \right) \right ).  
\end{align}
To prove \eqref{eq: reduction 1} and \eqref{eq: reduction 2}, we compute the explicit value of $\mathcal{M}(d,s,\alpha, \beta)$. When $(\alpha, \beta) = (0,0)$, by \cref{lem:const-1}, \eqref{eq: properties of Gamma 1} and \eqref{eq: constant fractional laplacian} we have 
\begin{align}
    \mathcal{M}(d,s,0,0) & = \int_{\R^{d-1}} \frac{1}{(1+ \abs{y}^2)^{\frac{d+2s}{2}}} \, dy = \mathcal{H}^{d-2}(\mathbb{S}^{d-2}) \int_0^{+\infty} \frac{r^{d-2}}{(1+r^2)^{\frac{d+2s}{2}}} \, dr = \pi^{\frac{d-1}{2}} \frac{\Gamma(\frac{2s+1}{2})}{\Gamma(\frac{d+2s}{2})} = \frac{\gamma_{1,s}}{\gamma_{d,s}}, 
\end{align}
Similarly, it is immediate to check that 
\begin{equation}
    \mathcal{M}(d,s, 2,0) = \frac{d-1}{2s-1} \cdot \frac{\gamma_{1,s}}{\gamma_{d,s}}. 
\end{equation}
Lastly, we observe that \eqref{eq: reduction 3} follows by \eqref{eq: reduction 2}, since by symmetry we have that 
\begin{equation}
    \int_{B_\delta^{d-1}} \frac{y_i^2}{(\abs{z}^2 + \abs{y}^2)^{\frac{d+2s}{2}}} \, d y = \frac{1}{d-1} \int_{B_\delta^{d-1}} \frac{\abs{y}^2}{(\abs{z}^2 + \abs{y}^2)^{\frac{d+2s}{2}}} \, d y \qquad i =1, \dots, d-1. 
\end{equation}
\end{proof}

\textbf{Acknowledgements}. H.~C. has received funding from the Swiss National Science Foundation under Grant PZ00P2\_202012/1. M.~F. is a member of the \selectlanguage{italian}{``Gruppo Nazionale per l'Analisi Matematica, la Probabilità e le loro Applicazioni'' (GNAMPA)} of the \selectlanguage{italian}{``Istituto Nazionale di Alta Matematica''} (INdAM). M.~F. is a member of the PRIN Project 2022AKNSE4 {\em Variational and Analytical aspects of Geometric PDEs}. M.~I. acknowledges the support of the SNF grant FLUTURA: Fluids, Turbulence, Advection No. 212573. 

\selectlanguage{english}
\begin{center}
    \textbf{Statements and Declarations}
\end{center}

\noindent \textbf{Conflict of interest.} The authors have no relevant financial or non-financial interests to disclose.

\smallskip

\noindent \textbf{Data availability.} Data sharing is not applicable, since no data were used for this research.

\bibliographystyle{plain} 
\bibliography{biblio}

\end{document}